\documentclass[a4paper,reqno]{amsart}

\usepackage{mathtools}
\mathtoolsset{showonlyrefs}

\usepackage{latexsym}
\usepackage[english]{babel}
\usepackage{fancyhdr}
\usepackage[mathscr]{eucal}
\usepackage{amsmath}
\usepackage{mathrsfs}
\usepackage{mathtools}
\usepackage{amsthm}
\usepackage{amsfonts}
\usepackage{amssymb}
\usepackage{amscd}
\usepackage{geometry}
\usepackage{bbm}
\usepackage{graphicx}
\usepackage{graphicx}
\usepackage{graphics}
\usepackage{latexsym}
\usepackage{color}
\usepackage{pifont}
\usepackage{tikz}
\usepackage[normalem]{ulem}
\usepackage{changepage}

\usepackage{geometry}

\newcommand{\ii}{\mathrm{i}}

\newcommand{\abs}[1]{\left\lvert#1\right\rvert}

\newcommand{\C}{\mathbb C}

\newcommand{\N}{\mathbb N}
\newcommand{\R}{\mathbb R}

\newcommand{\ip}[2]{\langle #1,#2\rangle}
\newcommand{\norm}[1]{\left\|#1\right\|}

\newcommand{\Ncal}{\mathcal{N}}

\theoremstyle{plain}
\newtheorem{theorem}{Theorem}[section]
\newtheorem{lemma}[theorem]{Lemma}
\newtheorem{corollary}[theorem]{Corollary}
\newtheorem{proposition}[theorem]{Proposition}

\theoremstyle{definition}

\newtheorem{remark}[theorem]{Remark}
\newtheorem*{remark*}{Remark}

\numberwithin{equation}{section}

\begin{document}

\title[Scattering for Choquard equations]
{Scattering for Short-Range Mass-Subcritical Choquard Equations}

 \author[A.~Michelangeli]{Alessandro Michelangeli}
 \address[A.~Michelangeli]{Mathematics and Science Department, AUBG Blagoevgrad \\
 and Hausdorff Center for Mathematics, HCM Bonn\\ and  Trieste Institute for Theoretical Quantum Technologies, TQT Trieste}
 \email{amichelangeli@aubg.edu}
 
  \author[M.~Tarulli]{Mirko Tarulli}
 \address[M.~Tarulli]{Mathematics and Science Department, AUBG Blagoevgrad \\ and
 Institute of Mathematics and Informatics, Bulgarian Academy of Science, Sofia}
 \email{mtarulli@aubg.edu}

% \date{\today}
%
% \subjclass[2010]{...}
%
% \keywords{....}

% \thanks{\emph{Acknowledgements.} For this work, A.M.~gratefully acknowledges support from the David Flanagan funds at the AUBG, American University Bulgaria, and the Alexander von Humboldt Foundation, Germany. }

 \begin{abstract}
We study scattering for the defocusing inhomogeneous generalized
Choquard equation with data in the conformal energy space. In the full
short-range mass-subcritical regime, a key
estimate for the nonlinear term yields scattering in \(L^2\). A 
pseudoconformal argument  in the compactified variables then gives scattering in \(H^1\). In the
homogeneous case, estimates based on the Galilean vector field further
imply scattering in \(\Sigma\) above the Strauss threshold. The method
avoids the additional algebraic restrictions arising from intermediate
Strichartz-Sobolev estimates.
\end{abstract}

\maketitle

% \tableofcontents
%
%  \newpage

\section{Introduction and main results}
The main object of this paper is to study the long-time properties of global solutions to
the defocusing inhomogeneous generalized Choquard equation.  More precisely, we consider the Cauchy problem
\begin{equation}\label{eq:ME}
 \left\{
 \begin{aligned}
  \ii\partial_t\phi+\Delta\phi
  &=
  \mathcal N_{b,\mu}(\phi),
  \qquad (t,x)\in\R\times\R^d,
  \\
  \phi(0,x)
  &=
  \phi_0(x),
  \qquad x\in\R^d,
 \end{aligned}
 \right.
\end{equation}
where
\(
 \phi:\R\times\R^d\longrightarrow\C.
\)
The nonlinearity is given by
\begin{equation}\label{eq:Nonl}
 \mathcal N_{b,\mu}(f)(x)
 :=
 \Bigl(
  I_\mu*(|\cdot|^{-b}|f|^q)
 \Bigr)(x)
 |x|^{-b}|f(x)|^{q-2}f(x),
 \qquad
 I_\mu(x):=|x|^{-(d-\mu)}.
\end{equation}
Unless otherwise specified, the parameters satisfy
\(
 d\ge1,
 \,
 0<\mu<d,
 \,
 b\ge0,
 \,
 q\ge2.
\)
The Cauchy problem is posed in the energy space \(H^1(\R^d)\).  We require initial data \(
 \phi_0\in \Sigma,
\)
where
\begin{equation}\label{eq:introS}
 \Sigma
 :=
 \bigl\{
  f\in H^1(\R^d):xf\in L^2(\R^d)
 \bigr\},
\end{equation}
is the conformal space equipped with the norm
\[
 \|f\|_\Sigma^2
 :=
 \|f\|_{H^1}^2+\|xf\|_{L^2}^2.
\]
We recall that the equation in \eqref{eq:ME} is invariant under the scaling
\[
 \phi_\lambda(t,x)
 :=
 \lambda^{\frac{2+\mu-2b}{2(q-1)}}
 \phi(\lambda^2t,\lambda x),
 \qquad
 \lambda>0.
\]
The corresponding Sobolev critical exponent is then given by
\[
 s_c
 :=
 \frac d2
 -
 \frac{2+\mu-2b}{2(q-1)},
\]
and the mass critical exponent is 
\begin{equation}\label{eq:BMa}
 q_{\mathrm m}
 :=
 1+\frac{2+\mu-2b}{d}.
\end{equation}
When \(d\ge3\), the energy-critical reads as
\[
 q_{\mathrm e}
 :=
 1+\frac{2+\mu-2b}{d-2},
\]
whereas for \(d=1,2\) we choose \(q_{\mathrm e}:=+\infty\). Our analysis concerns the short-range mass-subcritical interval
%\begin{equation}%\label{eq:ISRS}
\(
 q_{\mathrm{sr}}
<
 q
 <
 q_{\mathrm m},
 \)
%\end{equation}
with \(q_{\mathrm m}\) defined in \eqref{eq:BMa} and where the lower endpoint
\begin{equation}\label{eq:mass1}
 q_{\mathrm{sr}}
 :=
 1+\frac{1+\mu-2b}{d}
\end{equation}
is the short-range threshold. We first clarify the Cauchy theory used in the statements below.  For
the inhomogeneous equation \(b>0\), we assume
\begin{equation}\label{eq:IWC}
 0<2b<
 \min\left\{
  2d,\,
  2+\mu,\,
  d+\mu,\,
  4-d+\mu
 \right\}.
\end{equation}
Under the assumption \eqref{eq:IWC}, the local energy theory for
the inhomogeneous generalized Choquard equation is available for
\(
 2\le q<q_{\mathrm e},
\)
see
\cite{AlharbiSaanouni2019,AroraRoudenko2022,BoulaarasSaanouni2026}.
Consider the potential energy described by the functional
\begin{equation}\label{eq:weightedE}
 \mathcal I_{b,\mu}(f)
 :=
 \iint_{\R^d\times\R^d}
 \frac{|f(x)|^q|f(y)|^q}
 {|x|^b|x-y|^{d-\mu}|y|^b}\,dx\,dy .
\end{equation}
Notice that for \(b=0\), this reduces to the homogeneous Choquard potential energy. 
Then, the solution of \eqref{eq:ME} satisfies the conservation of mass
and energy:
\begin{align}
 M(\phi(t))
 &:=
 \|\phi(t)\|_{L^2}^2
 =
 \|\phi_0\|_{L^2}^2
 =
 M(\phi_0),
 \label{eq:CM2}
 \\
 E(\phi(t))
 &:=
 \frac12\|\nabla\phi(t)\|_{L^2}^2
 +
 \frac1{2q}\mathcal I_{b,\mu}(\phi(t))
 =
 E(\phi_0),
 \label{eq:CE2}
\end{align}
for every \(t\) in the maximal interval of existence of the solution. In particular, since
the interaction potential is nonnegative, the following bound is satisfied 
\begin{equation} \label{eq:CE3}
 \|\nabla\phi(t)\|_{L^2}^2
 \le
 2E(\phi_0).
\end{equation}
For \(\phi_0\in\Sigma\), the corresponding solution remains in
\(\Sigma\) on every compact subinterval of its maximal interval of
existence and satisfies
\begin{equation}\label{eq:IMP}
 \|x\phi(t)\|_{L^2}
 \le
 \|x\phi_0\|_{L^2}
 +
 2\int_{\min\{0,t\}}^{\max\{0,t\}}
 \|\nabla\phi(\tau)\|_{L^2}\,d\tau.
\end{equation}
In the defocusing case, the above conservation of mass \eqref{eq:CM2} and energy \eqref{eq:CE2}  gives instead
\[
 \sup_{t\in\R}\|\phi(t)\|_{H^1}<\infty.
\]
As a consequence, the solution extends globally in \(H^1\), while \eqref{eq:IMP} ensures that the second
spatial moment remains finite on every bounded time interval, while
Proposition~\ref{prop:FMP} gives
\(
 x\phi\in C(\R;L^2),
\)
and thus 
\(
 \phi\in C(\R;\Sigma).
\)
In the homogeneous case \(b=0\), we use the standard \(H^1\) Cauchy
theory and the same argument used previously, together with Proposition~\ref{prop:FMP},
yields the corresponding global solution in \(C(\R;\Sigma)\). An important feature of the mechanism is that although \eqref{eq:IWC} is invoked in the well-posedness theory, it
imposes no additional restriction in the present mass-subcritical
range. Once the global \(H^1\)-solution is available, the scattering
does not require further algebraic restrictions.

The scattering theory for Schr\"odinger equations with nonlocal
nonlinearities originates in
\cite{GinibreVelo1979,GinibreVelo1980}. The particular case
\(b=0\) and \(q=2\) is the classical Hartree equation
\begin{equation}
i\partial_t\phi+\Delta\phi
=
\bigl(|\cdot|^{-\gamma}*|\phi|^2\bigr)\phi,
\qquad
\gamma:=d-\mu.
\end{equation}
For the defocusing problem, asymptotic completeness in weighted
Sobolev spaces was established in \cite{HayashiTsutsumi1987} in the
range
\(
4/3<\gamma<\min\{4,d\},
\)
together with the existence of \(L^2\)-scattering states for data in
\(\Sigma\) when \(1<\gamma<4/3\). Wave operators for short-range
nonlocal nonlinearities were studied in \cite{NawaOzawa1992}, while
scattering in the energy space for a class of Hartree equations was
established in \cite{Nakanishi1999}. At the threshold \(\gamma=1\),
the problem enters the long-range regime and the standard free
scattering asymptotics must be modified; see
\cite{GinibreOzawa1993}. The theory was subsequently developed at the
critical thresholds. For the defocusing energy-critical Hartree
equation, global well-posedness and scattering for radial data were
proved in \cite{MiaoXuZhao2007}. The same authors treated the
mass-critical Hartree equation with radial data in
\cite{MiaoXuZhao2009}. These results provide important counterparts
to the mass-subcritical problem considered here, where the main
difficulty is instead the long-time integrability of the nonlinearity
in the short-range regime.

For generalized and inhomogeneous Choquard equations, the global
dynamics has been considered in several parameter regimes. Threshold
criteria separating global existence from finite-time blow-up for a
class of inhomogeneous Choquard equations were obtained in
\cite{AlharbiSaanouni2019}. For the focusing generalized Hartree
equation, \cite{AroraRoudenko2022} investigated the global
\(H^1\)-dynamics in the intercritical regime, including scattering
and blow-up behavior relative to the associated variational
threshold. In the defocusing mass-subcritical setting,
\cite{BoulaarasSaanouni2026} obtained nonradial scattering in weighted
Sobolev spaces above a Strauss-type threshold, under additional
restrictions on the parameters. Among the available results for
inhomogeneous generalized Hartree equations, the latter is the
closest to the regime addressed in the present work.

A complementary direction concerns the stationary Choquard problem,
which is relevant to the analysis of standing waves and their
stability. Existence and uniqueness results for ground states of the
\(p\)-Choquard model were proved in
\cite{GeorgievTarulliVenkov2019}. More recently, local uniqueness of
ground states for the generalized Choquard equation in the
energy-subcritical regime was shown in
\cite{GeorgievTarulliVenkov2024}.

The short-range mass-subcritical scattering theory for local nonlinear
Schr\"odinger equations has been extensively developed. Classical
\(L^2\)-scattering results for initial data in \(\Sigma\) were
obtained in \cite{TsutsumiYajima1984}, while the asymptotic behavior
of rapidly decaying solutions was studied in
\cite{CazenaveWeissler1992}. In the inhomogeneous local framework,
scattering in weighted \(L^2\) spaces was studied in
\cite{Dinh2021}. More recently, \(H^1\)-scattering for
mass-subcritical nonlinear Schr\"odinger equations with short-range
nonlinearity and initial data in \(\Sigma\) throughout the
corresponding short-range interval was proved in
\cite{BurqGeorgievTzvetkovVisciglia}. Their argument combines the
pseudoconformal transformation with the asymptotic behavior of the
second spatial moment and the Hilbert space structure of \(H^1\).
Related asymptotic properties of higher spatial moments were analyzed
in \cite{TzvetkovVisciglia2021}.

The contributions of this paper are threefold.  First, for the inhomogeneous equation, we establish an estimate that
controls the nonlinear term directly in terms of the weighted nonlocal
potential functional \(\mathcal I_{b,\mu}\).  This avoids the intermediate
Lebesgue-Sobolev estimates that produce additional exponent
restrictions in a standard Strichartz argument and yields
\(L^2\)-scattering throughout the full short-range mass-subcritical
interval
\(
 q_{\mathrm{sr}}
 <
 q
 <
  q_{\mathrm{m}}.
\)
Second, we adapt the pseudoconformal argument of
\cite{BurqGeorgievTzvetkovVisciglia} to the weighted nonlocal Choquard
interaction. The proof uses the decay properties of the rescaled second moment under
the pseudoconformal scaling to recover the corresponding \(H^1\)-asymptotics.
This strengthens the \(L^2\)-scattering result to \(H^1\)-scattering
throughout the same parameter range.

Third, in the homogeneous case \(b=0\), we prove the stronger
conclusion of scattering in \(\Sigma\) above the Strauss exponent.
The proof proceeds directly from the evolution equation and the
associated weighted estimates and combines the potential
energy with the vector field \(J(t)=x+2it\nabla\).  Unlike the argument based on the combination of decay,
Strichartz estimates, and Sobolev embedding, which retains an additional
algebraic restriction even after choosing \(b=0\),
the present approach closes under the exact condition \(q>q_0\).

To the best of our knowledge, this provides a new scattering construction
for a weighted nonlocal Schr\"odinger equation in which the same
asymptotic state is successively shown to belong to \(L^2\), \(H^1\),
and, in the homogeneous case, \(\Sigma\).

\subsection*{Main results}

Our first main result concerns the inhomogeneous equation and gives
\(H^1\)-scattering throughout the full short-range mass-subcritical
interval.

\begin{theorem}
\label{thm:MH1}
Let
\begin{equation}\label{eq:ass}
d\ge1,
 \qquad
 0<\mu<d,
 \qquad
 0\le b<d,
 \qquad
 2b<d+\mu,
 \qquad
 q\ge2,
\end{equation}
and assume
\begin{equation}\label{eq:frf}
 q_{\mathrm sr}
 <
 q
 <
 q_{\mathrm m}.
\end{equation}
Let \(\phi_0\in\Sigma\), and let
\(
\phi\in C\bigl(\mathbb R;H^1(\mathbb R^d)\bigr)
\)
be the corresponding global solution of
\eqref{eq:ME}. Then, there exist \(\phi_\pm\in H^1(\R^d)\) such that
\begin{equation}
 \lim_{t\to\pm\infty}
 \left\|
 \phi(t)-e^{it\Delta}\phi_\pm
 \right\|_{H^1}
 =0.
\end{equation}
\end{theorem}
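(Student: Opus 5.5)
We plan to follow the strategy of Burq--Georgiev--Tzvetkov--Visciglia in two stages: first we obtain scattering in \(L^2\), then we upgrade it to \(H^1\) using the pseudoconformal transform.

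\textbf{Stage 1: \(L^2\)-scattering.} We use the pseudoconformal conservation law for \eqref{eq:ME}. With \(J(t)=x+2it\nabla\), a direct computation using the scaling of \(\mathcal N_{b,\mu}\) gives
\[
 \frac{d}{dt}\Bigl(\|J(t)\phi\|_{L^2}^2+\frac{4t^2}{q}\mathcal I_{b,\mu}(\phi(t))\Bigr)
 =\frac{4t}{q}\bigl(2-d(q-1)+\mu-2b\bigr)\,\mathcal I_{b,\mu}(\phi(t)).
\]
For \(q<q_{\mathrm m}\) the right-hand side has a favourable sign only in the sense of a Gronwall argument. That argument yields the decay
\[
 \mathcal I_{b,\mu}(\phi(t))\lesssim \langle t\rangle^{-(d(q-1)-\mu+2b)}
\]
and the bound \(\|J(t)\phi\|_{L^2}\lesssim \langle t\rangle^{1-\frac{d(q-1)-\mu+2b}{2}}\).

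The key new estimate controls the nonlinear term directly by the potential functional. By duality and Cauchy--Schwarz in the bilinear form defining \(\mathcal I_{b,\mu}\), we estimate
\[
 \bigl|\langle \mathcal N_{b,\mu}(\phi),\psi\rangle\bigr|\le \mathcal I_{b,\mu}(\phi)^{1/2}\,\mathcal I_{b,\mu}^{\mathrm{mix}}(\phi,\psi)^{1/2},
\]
where the mixed term has \(|\phi|^{q-1}|\psi|\) in one slot. We then estimate the mixed term by weighted Hardy--Littlewood--Sobolev and Gagliardo--Nirenberg in terms of \(\|\psi\|_{L^2}\) and \(\|J\phi\|\). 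This shows \(\|e^{-it\Delta}\mathcal N_{b,\mu}(\phi(t))\|_{L^2}\) is integrable in time. Integrability holds exactly when the total decay exponent exceeds \(1\), which is \(q>q_{\mathrm{sr}}\). Cook's method then gives \(\phi_\pm\in L^2\).

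\textbf{Stage 2: \(H^1\)-upgrade.} We apply the pseudoconformal transform
\[
 \phi(t,x)=(1+t)^{-d/2}e^{\ii|x|^2/(4(1+t))}\,v\Bigl(\tfrac{t}{1+t},\tfrac{x}{1+t}\Bigr),
\]
which maps \(t\in[0,\infty)\) to \(s\in[0,1)\). The function \(v\) solves a Choquard equation with a time-dependent coefficient \((1-s)^{\theta}\), where \(\theta=d(q-1)-\mu+2b-2<0\). This coefficient is singular at \(s=1\). The condition \(q>q_{\mathrm{sr}}\) makes \(\theta>-1\), so the coefficient is integrable.

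Under the transform, \(\|xv(s)\|_{L^2}\) corresponds to \(\|\phi\|\) and \(\|\nabla v(s)\|\) corresponds to \(\|J\phi\|\). We will show that \(v(s)\) converges in \(L^2\) and that \(\nabla v(s)\) is bounded, so \(v(s)\rightharpoonup v(1)\) in \(H^1\). The weighted second moment \(\|x\phi(t)\|/t\), equivalently \(\|\nabla\phi\|\) against the moment of \(v\), has an asymptotic limit. Combining that limit with energy conservation and the decay \(\mathcal I_{b,\mu}(\phi(t))\to0\) gives \(\|\nabla\phi(t)\|_{L^2}^2\to 2E(\phi_0)\). We identify this limit with \(\|\nabla\phi_\pm\|^2\) using the relation \(\lim\|x\phi(t)\|^2/(4t^2)=\|\nabla \phi_\pm\|^2\), which we derive from the virial identity \eqref{eq:IMP} and its exact second-derivative version.

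With the norm convergence in hand, weak convergence of \(e^{-it\Delta}\phi(t)\) in \(H^1\) plus convergence of norms gives strong \(H^1\) convergence by the Hilbert space structure. The weak convergence follows from the \(L^2\) limit and boundedness in \(H^1\).

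\textbf{Main obstacle.} The hard step is the nonlinear estimate in Stage 1. We need a bound on \(\mathcal N_{b,\mu}\) by \(\mathcal I_{b,\mu}^{1/2}\) that avoids additional Lebesgue exponent restrictions and uses only \eqref{eq:ass}. Our first attempt would be weighted HLS (Stein--Weiss) with weights \(|x|^{-b}\), \(|y|^{-b}\), paired with the decay from the pseudoconformal law.
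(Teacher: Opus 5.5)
There is a genuine gap in Stage 1, and a second one in the identification step of Stage 2.

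\textbf{Stage 1.} Your Stage 1 does not reach the full interval \(q_{\mathrm{sr}}<q<q_{\mathrm m}\). Since \(e^{-it\Delta}\) is unitary, Cook's method as you set it up needs \(\int^\infty\|\mathcal N_{b,\mu}(\phi(t))\|_{L^2}\,dt<\infty\). This fails for two reasons.

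First, \eqref{eq:ass} allows \(2b\ge d\) (e.g.\ \(d=1\), \(\mu=0.9\), \(b=0.6\), \(q=2\)). Then \(\mathcal N_{b,\mu}(\phi)\) behaves like \(|x|^{-b}\) near the origin whenever \(\phi(t,0)\neq0\), so it is not in \(L^2\) at all.

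More fundamentally, the decay numerology does not close. In the mass-subcritical regime the pseudoconformal law gives the free rate only for \(\mathcal I_{b,\mu}(\phi(t))\). Meanwhile \(\|J(t)\phi\|_{L^2}\lesssim t^{\beta/2}\) grows, so every Lebesgue norm you get from Gagliardo--Nirenberg with \(J\) decays at a degraded rate. Take \(b=0\), where \(\beta=2-\nu\). Your splitting gives \(\mathcal I_{0,\mu}(\phi)^{1/2}\lesssim t^{-\nu/2}\) times \(\|\phi\|_{L^s}^{q-1}\) with \(s=2d(q-1)/\mu\), and \(\|\phi(t)\|_{L^s}^{q-1}\lesssim t^{-\frac\nu2(1-\frac\beta2)}=t^{-\nu^2/4}\). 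The total exponent \(\frac\nu2+\frac{\nu^2}4\) exceeds \(1\) only for \(\nu>\sqrt5-1\), whereas \(q>q_{\mathrm{sr}}\) means \(\nu>1\). So you land on a Strauss-type threshold, plus Sobolev-range restrictions on \(s\). That is exactly what the paper is designed to avoid.

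The missing idea is to never put \(\mathcal N_{b,\mu}\) in \(L^2\). In the compactified variables, \(\sup_\tau\mathcal I_{b,\mu}(\psi(\tau))<\infty\) with no loss. The paper's estimate \(|\langle\mathcal N_{b,\mu}(u),v\rangle|\le\mathcal I_{b,\mu}(u)^{1-\frac1{2q}}\mathcal I_{b,\mu}(v)^{\frac1{2q}}\) is then used only in two ways:
\begin{itemize}
\item with \(v\in C_c^\infty\), which gives a weak \(L^2\) limit \(\psi_+\);
\item with \(v=\psi(\tau)\) itself, which gives \(|\langle\psi(s)-\psi(\tau),\psi(\tau)\rangle|\lesssim(1-\tau)^{1-\beta}\) and hence strong convergence.
\end{itemize}
The only time integrability required is \(\int^1(1-r)^{-\beta}\,dr<\infty\), which is exactly \(q>q_{\mathrm{sr}}\).

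\textbf{Stage 2.} Your observation \(\|\nabla\phi(t)\|_{L^2}^2=2E(\phi_0)-\frac1q\mathcal I_{b,\mu}(\phi(t))\to2E(\phi_0)\) is correct. However, the virial identity cannot identify this limit with \(\|\nabla\phi_+\|_{L^2}^2\). It only shows that \(\|x\phi(t)\|_{L^2}^2/(4t^2)\) has the same limit \(2E(\phi_0)\), and it carries no information about \(\phi_+\). Weak lower semicontinuity gives only \(\|\nabla\phi_+\|_{L^2}^2\le2E(\phi_0)\); the reverse inequality is the entire content of the upgrade.

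The paper obtains it in two steps. It compares \(|\phi(t)|\) with the free profile \((2t)^{-d/2}|\widehat{\phi_+}(x/2t)|\) on \(\{|x|<R(1+t)\}\), which gives \(\int_{|x|<R(1+t)}\frac{|x|^2}{4(1+t)^2}|\phi(t,x)|^2\,dx\to\int_{|\xi|<R/2}|\xi|^2|\widehat{\phi_+}(\xi)|^2\,d\xi\). It then closes the upper bound with the uniform tightness \(\lim_{R\to\infty}\limsup_{t\to\infty}\int_{|x|>R(1+t)}\frac{|x|^2}{(1+t)^2}|\phi(t,x)|^2\,dx=0\) of Lemma~\ref{lem:BPT}. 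That lemma is proved from a truncated second moment of \(\psi\) whose \(\tau\)-derivative is \(O((1-\tau)^{-\beta/2})\) uniformly in \(R\). Without such tightness, \(L^2\)-convergence cannot control the unbounded weight \(|x|/t\).

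Finally, your claim that \(\nabla v(s)\) stays bounded is unjustified. \(\|\nabla v(s)\|_{L^2}=\frac12\|(x+2i(1+t)\nabla)\phi(t)\|_{L^2}\) is only known to be \(O((1-s)^{-\beta/2})\), consistent with your own Stage 1 growth bound. What stays bounded is the rescaled second moment \(\|y\,v(s)\|_{L^2}\).
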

We now turn to the homogeneous generalized Choquard equation
\begin{equation}\label{eq:introCb0}
 i\partial_t\phi+\Delta\phi
 =
 \bigl(I_\mu*|\phi|^q\bigr)|\phi|^{q-2}\phi.
\end{equation}
We introduce
\begin{equation}\label{eq:IBN}
 \nu:=d(q-1)-\mu,
\end{equation}
and consider the quadratic polynomial
\begin{equation}\label{eq:IBSP}
 P_0(X)
 :=
 2dX^2+(d-2\mu-2)X-\mu-2.
\end{equation}
One can see \eqref{eq:IBSP} that 
has a unique positive root \(X_0\). The associated Strauss threshold is
defined by
\(
 q_0:=1+X_0,
\)
and a direct calculation gives
\[
 P_0\left(\frac{\mu+1}{d}\right)<0,
 \qquad
 P_0\left(\frac{\mu+2}{d}\right)>0,
\]
that is
\begin{equation}
 1+\frac{\mu+1}{d}
 <
 q_0
 <
 1+\frac{\mu+2}{d}.
\end{equation}

Our second main result gives scattering in the conformal space for the
homogeneous equation.

\begin{theorem}
\label{thm:IMSB}
Let
\begin{equation}\label{eq:ass2}
 d\ge1,
 \qquad
 0<\mu<d,
 \qquad
 q\ge2,
\end{equation}
and assume
\begin{equation}\label{eq:frf2}
 q_0<q<1+\frac{\mu+2}{d}.
\end{equation}
Let \(\phi_0\in\Sigma\), and let \(\phi\in C\bigl(\mathbb R;H^1(\mathbb R^d)\bigr)\) the unique global solution of the Cauchy problem associated to \eqref{eq:introCb0} with \(\phi(0)=\phi_{0}\). Then, there exists \(\phi_{\pm}\in\Sigma\) such that
\begin{equation}\label{eq:IMSC}
 \lim_{t\to\pm\infty}
 \left\|
 e^{-it\Delta}\phi(t)-\phi_{\pm}
 \right\|_{\Sigma}
 =0.
\end{equation}
\end{theorem}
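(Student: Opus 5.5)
The plan follows the classical pseudoconformal route of Tsutsumi--Yajima and Ginibre--Velo, written with the Galilean vector field \(J(t)=x+2it\nabla = M(t)\,2it\nabla\, M(-t)\), where \(M(t)=e^{i|x|^2/4t}\). Three steps.

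\textbf{Step 1 (pseudoconformal law).} For \(\phi_0\in\Sigma\) and \(b=0\), set
\[
F(t):=\tfrac12\|J(t)\phi\|_{L^2}^2+\tfrac{2t^2}{q}\,\mathcal I_{0,\mu}(\phi(t)) .
\]
Differentiating, using the equation and the scaling of the Riesz kernel, I expect
\[
F'(t)=\frac{2t}{q}\,\bigl(2-\nu\bigr)\,\mathcal I_{0,\mu}(\phi(t)),
\qquad \nu=d(q-1)-\mu .
\]
Since \(q<1+\frac{\mu+2}{d}\) means \(\nu<2\), the right side has the sign of \(t\), so \(F\) grows for \(t>0\). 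Gronwall applied to \(t^2\mathcal I\) then gives
\[
\|J(t)\phi(t)\|_{L^2}^2+t^2\,\mathcal I_{0,\mu}(\phi(t))\lesssim \langle t\rangle^{2-\nu}.
\]
All computations are justified by regularizing with \(e^{-\varepsilon|x|^2}\) weights as in Proposition~\ref{prop:FMP}.

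\textbf{Step 2 (decay).} Combine the Gagliardo--Nirenberg inequality in the form
\[
\|\phi\|_{L^r}\lesssim t^{-d(\frac12-\frac1r)}\,\|\phi\|_{L^2}^{1-\theta}\,\|J\phi\|_{L^2}^{\theta}
\]
(obtained via \(J=M\,2it\nabla\,\bar M\)) with Step 1. This gives \(L^r\) decay of order \(t^{-d(\frac12-\frac1r)}\,t^{\theta(2-\nu)/2}\) for \(r\in[2,2d/(d-2))\).

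\textbf{Step 3 (Cauchy property).} Use Duhamel for \(u(t)=e^{-it\Delta}\phi(t)\):
\[
u(t)-u(s)=-i\int_s^t e^{-i\tau\Delta}\,\mathcal N(\phi(\tau))\,d\tau .
\]
\begin{itemize}
\item \emph{\(L^2\) part.} Control \(\mathcal N\) in a dual Strichartz norm \(L^{\gamma'}_tL^{\rho'}_x\). Use Hardy--Littlewood--Sobolev to get
\[
\|\mathcal N(\phi)\|_{L^{\rho'}}\lesssim \|\phi\|_{L^{r}}^{2q-1}
\]
for a suitable \(r\), and insert the decay from Step 2.
\item \emph{\(\nabla\) and \(x\) parts.} Since \(x\,e^{-it\Delta}=e^{-it\Delta}J(t)\), the \(\Sigma\) norm of \(u\) is equivalent to the \(H^1\) and \(J\) norms of \(\phi\). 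Apply the same estimate to \(\nabla\mathcal N\) and \(J\mathcal N\). Because \(J\) acts as a derivation on gauge-invariant nonlinearities, i.e. \(J=M\,2it\nabla\,\bar M\) and \(\mathcal N(M f)=M\,\mathcal N(f)\), we get
\[
\|J\mathcal N(\phi)\|_{L^{\rho'}}\lesssim \|\phi\|_{L^r}^{2q-2}\,\|J\phi\|_{L^{\rho}} .
\]
Close this with a bootstrap in \(L^\gamma_t L^\rho_x\) Strichartz norms of \(J\phi\) on \([T,\infty)\), so that \(J\phi\) and \(\nabla\phi\) are not only bounded in \(L^2\).
\end{itemize}
The time integrability needed is roughly
\[
(2q-2)\Bigl[d\bigl(\tfrac12-\tfrac1r\bigr)-\tfrac{\theta(2-\nu)}2\Bigr]\cdot\tfrac{\gamma'}{\ldots}>1 .
\]
After optimizing \(r\), I expect this to reduce exactly to \(P_0(q-1)>0\), i.e. \(q>q_0\).

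\textbf{Main obstacle.} The hard part is this optimization. We must choose \(r,\rho,\gamma\) admissible, respect the HLS constraints \(\frac{\mu}{d}\) and \(0<\mu<d\), and lose nothing, so that the condition comes out as the quadratic \(2dX^2+(d-2\mu-2)X-\mu-2>0\) and not something stronger. The abstract warns that the Strichartz--Sobolev route adds restrictions. So I would first try to bound \(\mathcal N\) directly through \(\mathcal I_{0,\mu}\): by Cauchy--Schwarz in the nonlocal pairing,
\[
\|\mathcal N(\phi)\|\lesssim \mathcal I^{1/2}\cdot(\ldots).
\]
This uses the decay \(\mathcal I\lesssim t^{-\nu}\) from Step 1 directly rather than passing through \(L^r\) norms. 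Matching the exponents is where \(P_0\) should emerge.

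Finally, the limits \(\phi_\pm=\lim u(t)\) exist in \(\Sigma\) by the Cauchy property, and the case \(t\to-\infty\) follows by time reversal.
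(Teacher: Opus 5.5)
You have the right architecture. Step~1 gives $\mathcal I_{0,\mu}(\phi(t))\lesssim t^{-\nu}$, which is equivalent to the paper's bound \eqref{eq:BJID}. A Strichartz bootstrap for $J\phi$ on $[T,\infty)$ is exactly how the paper obtains the $x$-component. But there is a genuine gap: the decisive step, a nonlinear estimate whose time integrability is precisely $P_0(q-1)>0$, is missing, and the estimate you actually describe cannot give it.

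Your Step~2 decay $\|\phi(t)\|_{L^s}\lesssim t^{-\delta_s\nu/2}$, with $\delta_s=d(\frac12-\frac1s)$, is only the fraction $\nu/2<1$ of the free rate, because $\|J\phi(t)\|_{L^2}$ is only known to be $O(t^{1-\nu/2})$. Suppose you distribute the $2q-2$ factors of $\phi$ in $J\mathcal N_{0,\mu}(\phi)$ over Lebesgue spaces via HLS and H\"older, measuring the output in $L^{\gamma_2'}_tL^{\rho_2'}_x$ and $J\phi$ in $L^{\gamma_1}_tL^{\rho_1}_x$. The scaling constraint then forces the total decay exponent to be $\frac\nu2(\nu-d+2Y)$ and the time-H\"older exponent to be $\frac1\kappa=1-\frac d2+Y$, where $Y=\frac d2(\frac1{\rho_1}+\frac1{\rho_2})$. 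Integrability reads $(\nu-1)(Y-\frac d2)>1-\frac{\nu^2}{2}$. Since $0\le\frac d2-Y=\frac1{\gamma_1}+\frac1{\gamma_2}\le1$, this can only hold if $\nu>\sqrt2$, whatever exponents you choose. But $q>q_0$ is equivalent to $\nu>\frac{2q}{2q-1}$, and $\frac{2q}{2q-1}\le\frac43<\sqrt2$ for $q\ge2$. For $r=\rho=\frac{2dq}{d+\mu}$ your condition is exactly $\nu^2(q-1)+\nu-2q>0$, which the paper shows fails just above $q_0$. So ``optimizing $r$'' will not make $P_0$ emerge.

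Your fallback points in the right direction but with the wrong power. Cauchy--Schwarz in the nonlocal pairing extracts only $\mathcal I_{0,\mu}^{1/2}$ and leaves a factor $\|\phi\|_{L^r}^{q-2}$, which again must go through the lossy decay. The resulting rate $\frac\nu2+\frac{(q-2)\nu^2}{4q}$ is strictly below $\frac{\nu(q-1)}{q}$ when $q>2$.

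The missing idea is H\"older's inequality with respect to the positive measure $I_\mu(x-y)\,dx\,dy$ with weights $\frac{q-1}{q}$ and $\frac1q$, followed by HLS at $p_\mu=r/q$:
\begin{itemize}
\item For the term $\bigl(I_\mu*(|f|^{q-1}|\nabla f|)\bigr)|f|^{q-1}$, write $|f|^{q-1}|\nabla f|=A^{\frac{q-1}{q}}D^{\frac1q}$ with $A=|f|^q$ and $D=|\nabla f|^q$.
\item For the term $(I_\mu*|f|^q)|f|^{q-2}|\nabla f|$, combine this H\"older step with Cauchy--Schwarz for $B$.
\end{itemize}
Together with $J(t)=2it\,M_t\nabla M_t^{-1}$ and gauge invariance, this yields
\[
\|J(t)\mathcal N_{0,\mu}(u)\|_{L^{r'}}\lesssim\mathcal I_{0,\mu}(u)^{\frac{q-1}{q}}\|J(t)u\|_{L^r},\qquad r=\frac{2dq}{d+\mu}.
\]
No Lebesgue norm of $u$ is left over, so only $\mathcal I_{0,\mu}\lesssim t^{-\nu}$ enters, and it decays at the free rate. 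This matters because $\mathcal I_{0,\mu}$ is essentially a negative Sobolev norm of $|\phi|^q$, so its decay gives no decay of $\|\phi\|_{L^s}$; the estimate has to be expressed entirely through $\mathcal I_{0,\mu}$. With $a=4q/\nu$ and $\kappa=\frac{2q}{2q-\nu}$, the bootstrap closes if and only if $\frac{2\nu(q-1)}{2q-\nu}>1$, i.e.\ $\nu(2q-1)>2q$, i.e.\ $P_0(q-1)>0$.

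The same estimates for $\mathcal N_{0,\mu}$ and $\nabla\mathcal N_{0,\mu}$ would also handle your $L^2$ and $\nabla$ components. The paper instead imports $H^1$-scattering from Theorem~\ref{thm:MH1} and identifies $\eta_+=x\phi_+$ via the closedness of multiplication by $x$.
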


The homogeneous result has a sharper parameter range than that obtained
by formally applying to \(b=0\) the decay, Strichartz, and Sobolev
estimates used in \cite{BoulaarasSaanouni2026}.
Indeed, their intermediate pointwise estimate becomes
\[
 \|\phi(t)\|_{L^r}
 \lesssim
 (1+t)^{-\nu^2/(4q)},
 \qquad
 r=\frac{2dq}{d+\mu},
\]
and the corresponding time-integrability condition is
\begin{equation}\label{eq:ISBC}
 \nu^2(q-1)+\nu-2q>0.
\end{equation}
Thus an additional algebraic restriction persists even after selecting \(b=0\). The potential interaction estimate used here instead yields the
condition
\(
 \nu(2q-1)>2q,
\)
which is precisely \(q>q_0\).  At the algebraic level, the condition \eqref{eq:ISBC} is more restrictive than
\(q>q_0\): if \(X=q_0-1\), then at \(q=q_0\),
\[
 \nu^2(q_0-1)+\nu-2q_0
 =
 -\frac{4X^2(X+1)}{(2X+1)^2}<0.
\]
Consequently, \eqref{eq:ISBC} fails for \(q>q_0\) sufficiently
close to \(q_0\), whereas Theorem~\ref{thm:IMSB} covers the entire range
\(
 q\ge2,
 \)
 \(
 q_0<q<q_{\mathrm{m}}.
\)
 
  It is natural also to ask whether the Lens Transform argument of
\cite{BurqGeorgievTzvetkovVisciglia} can be applied directly to
\eqref{eq:introCb0}.  The transform itself remains compatible with the
homogeneity of the Riesz kernel and yields a corresponding conformal
energy.  The obstruction lies instead in the exponent numerology of the
nonlinear estimates.  Standard Hardy-Littlewood-Sobolev and Sobolev
bounds provide the superlinear continuity inequality required by the
local endpoint argument, but introduce additional spatial-exponent
restrictions and lead to conditions stronger than \(q>q_0\).
Conversely, in the homogeneous case, the structure of
the nonlinearity allows us to combine the potential energy directly
with the vector field
\(
 J(t)=x+2it\nabla.
\)
The resulting estimate for \(J(t)\mathcal N_{0,\mu}(\phi)\) preserves a
power of the potential energy and retains the sharp Strauss
numerology.  Combined with the pseudoconformal decay, it closes under
the exact strict condition
\(
 \nu(2q-1)>2q,
\)
without the additional restrictions arising
from intermediate Sobolev estimates. This explains why the argument based on the use of the operator 
 \(J(t)\) is well suited to the parameter range considered here.

\subsection*{Outline of the paper}

In Section~\ref{sec:Pseudo2}, we introduce the pseudoconformal
transformation of \eqref{eq:ME}, derive the associated conformal-energy
bounds, and establish the weighted interaction estimates needed in the
scattering analysis. Section~\ref{sec:IEB} uses these
estimates to prove strong \(L^2\)-convergence of the compactified
solution and hence \(L^2\)-scattering in the full short-range
mass-subcritical regime. Section~\ref{sec:BPUC} is devoted to the passage from
\(L^2\)-scattering to \(H^1\)-scattering. We show that the spatial gradient of the solution is asymptotically determined by a rescaled \(x\phi\) term and exploit the resulting relation with the rescaled second moment to prove convergence of the gradient norm. The appendices collect the second moment estimate and the auxiliary
inequalities needed in the preceding sections.

\section{Pseudoconformal transformation and bilinear estimates}
\label{sec:Pseudo2}

In this section we study the inhomogeneous Choquard equation in
pseudoconformal variables and base the scattering argument on an
estimate for the nonlocal potential functional.
The homogeneous case \(b=0\) is recovered throughout as a particular
case. In the remainder of the paper, we consider only the limit
\(t\to+\infty\). The Cauchy problem \eqref{eq:ME} is invariant under the transformation
\(
 \tilde \phi(t,x):=\overline{\phi(-t,x)},
\)
thus the corresponding assertions as \(t\to-\infty\) follow by applying the
forward-time argument to the past-time solution and no
generality is lost by restricting the analysis below to nonnegative
times. We introduce the pseudoconformal variables
\begin{equation}\label{eq:inhCon}
 \tau=\frac{t}{1+t},
 \qquad
 y=\frac{x}{1+t},
 \qquad
 \psi(\tau,y)
 =
 (1-\tau)^{-d/2}
 e^{-\frac{i|y|^2}{4(1-\tau)}}
 \phi\left(\frac{\tau}{1-\tau},\frac{y}{1-\tau}\right).
\end{equation}

 \begin{proposition}
\label{prop:BPCP1}
 Assume \eqref{eq:ass} and \eqref{eq:frf}. Let \(\phi_0\in\Sigma\),  and let
\(
\phi\in C\bigl(\mathbb R;H^1(\mathbb R^d)\bigr)
\)
be the corresponding global solution of 
\eqref{eq:ME}. Let \(\psi\) be defined by
\eqref{eq:inhCon}. Then
\(
 \psi(0,x)
 =
 e^{-\frac{i|x|^2}{4}}\phi_0(x)
 \in\Sigma,
\)
\(
 \psi\in C([0,1);\Sigma),
\)
and \(\psi\) solves
\begin{equation}\label{eq:pcpsieq}
 i\partial_\tau\psi+\Delta_y\psi
 =
 (1-\tau)^{-\beta}\mathcal N_{b,\mu}(\psi),
\end{equation}
with \(\mathcal N_{b,\mu}\) defined in
\eqref{eq:Nonl} and where
\begin{equation}\label{eq:beta}
 \beta:=2+\mu-d(q-1)-2b.
\end{equation}
Moreover,
\begin{equation}\label{eq:pcEbound}
 \sup_{0\le \tau<1}
 \left\{
 (1-\tau)^\beta\|\nabla\psi(\tau)\|_{L^2}^2
 +
 \mathcal I_{b,\mu}(\psi(\tau))
 \right\}
 <\infty,
\end{equation}
with \( \mathcal I_{b,\mu}\) defined as in \eqref{eq:weightedE}.
\end{proposition}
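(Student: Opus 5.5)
The argument has three parts: a direct computation of the equation for \(\psi\), a transfer of regularity from \(\phi\) to \(\psi\), and the conformal (pseudoconformal) energy identity for \(\phi\), rewritten in the \((\tau,y)\) variables.

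\textbf{Equation and initial datum.} At \(\tau=0\), \eqref{eq:inhCon} gives \(\psi(0,x)=e^{-i|x|^2/4}\phi_0(x)\). This datum lies in \(\Sigma\) because \(\nabla(e^{-i|x|^2/4}\phi_0)=e^{-i|x|^2/4}(\nabla\phi_0-\tfrac{i}{2}x\phi_0)\), and both terms are in \(L^2\) when \(\phi_0\in\Sigma\). For the equation, I would use the inverse relations \(t=\tau/(1-\tau)\), \(x=y/(1-\tau)\), \(1+t=(1-\tau)^{-1}\), and compute \(i\partial_\tau\psi+\Delta_y\psi\) by the standard pseudoconformal (lens) computation, first at the level of smooth solutions. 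The linear terms cancel, which is the classical invariance of the free Schr\"odinger equation under this transform. What remains is \((1-\tau)^{-d/2-2}e^{-i|y|^2/(4(1-\tau))}\mathcal N_{b,\mu}(\phi)(t,x)\).

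The exponent \(\beta\) comes from homogeneity of the nonlinearity. Substituting \(\phi=(1-\tau)^{d/2}e^{i|y|^2/(4(1-\tau))}\psi\), the phase cancels in \(|\phi|^{q-2}\phi\). The convolution term transforms as
\[
 I_\mu*(|\cdot|^{-b}|\phi|^q)(x)=(1-\tau)^{dq/2-d+(d-\mu)+b}\,\bigl(I_\mu*(|\cdot|^{-b}|\psi|^q)\bigr)(y),
\]
using \(dx'=(1-\tau)^{-d}dy'\) and \(|x-x'|^{-(d-\mu)}=(1-\tau)^{d-\mu}|y-y'|^{-(d-\mu)}\). The weight gives \(|x|^{-b}=(1-\tau)^b|y|^{-b}\). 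Collecting all the powers of \((1-\tau)\) then yields \((1-\tau)^{-\beta}\) with \(\beta=2+\mu-d(q-1)-2b\). The identity is first justified for regular data and then extended by the density and continuous-dependence arguments of the \(H^1\) Cauchy theory.

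\textbf{Regularity.} The claim \(\psi\in C([0,1);\Sigma)\) follows from \(\phi\in C(\R;\Sigma)\), which holds by Proposition~\ref{prop:FMP} and the global \(H^1\) theory. On each compact subinterval of \([0,1)\) the change of variables is smooth, and the phase factor maps \(\Sigma\) continuously to \(\Sigma\).

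\textbf{Uniform bound \eqref{eq:pcEbound}.} This is the main step. The key quantity is the pseudoconformal energy
\[
 \mathcal E(t):=\|(x+2it\nabla)\phi\|_{L^2}^2+\frac{4t^2}{q}\mathcal I_{b,\mu}(\phi),
\]
with the constant chosen to match the energy \eqref{eq:CE2}. Using the virial identities (the second moment estimate of the appendix and the Choquard Pohozaev-type identity), I would derive
\[
 \frac{d}{dt}\mathcal E(t)=\frac{4t}{q}\bigl(2-d(q-1)+\mu-2b\bigr)\mathcal I_{b,\mu}(\phi)=\frac{4t\beta}{q}\mathcal I_{b,\mu}(\phi).
\]
Since \(q<q_{\mathrm m}\), we have \(\beta>0\), so \(\mathcal E\) increases and a Gronwall-type argument is needed. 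Instead I would work on \([0,\infty)\) with the shifted version built from \(J_1=x+2i(1+t)\nabla\), which is the version naturally matching the variable \(1+t\) in \eqref{eq:inhCon}. For this version,
\[
 \mathcal E_1(t)=\|J_1\phi\|^2+\tfrac{4(1+t)^2}{q}\mathcal I_{b,\mu}(\phi), \qquad \mathcal E_1'(t)=\tfrac{4\beta(1+t)}{q}\mathcal I_{b,\mu}(\phi)\le\tfrac{\beta}{1+t}\,\mathcal E_1(t).
\]
Gronwall then gives \(\mathcal E_1(t)\lesssim(1+t)^{\beta}\), so in particular \((1+t)^{2-\beta}\mathcal I_{b,\mu}(\phi(t))\lesssim1\) and \(\|J_1\phi(t)\|_{L^2}^2\lesssim(1+t)^\beta\).

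It remains to translate back to \(\psi\):
\begin{align}
 \mathcal I_{b,\mu}(\psi(\tau))&=(1+t)^{-\nu'}\mathcal I_{b,\mu}(\phi(t)), \qquad \nu':=d(q-1)-\mu+2b, \\
 \|\nabla_y\psi(\tau)\|_{L^2}&=\tfrac12\|J_1\phi(t)\|_{L^2}.
\end{align}
Since \(\nu'=2-\beta\), the first line gives \(\mathcal I_{b,\mu}(\psi(\tau))\lesssim1\). The second gives \((1-\tau)^\beta\|\nabla\psi\|^2=(1+t)^{-\beta}\|\nabla\psi\|^2\lesssim1\). Together these are exactly \eqref{eq:pcEbound}.

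The delicate part is justifying the virial identity for \(\mathcal I_{b,\mu}\) with the \(|x|^{-b}\) weights. The term \(x\cdot\nabla\) acting on the weighted convolution has to be handled by a regularization argument, and the scaling computation produces the constant \(2-d(q-1)+\mu-2b\) via Euler's identity for the homogeneous kernel.
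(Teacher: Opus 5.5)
Your proposal is correct. The derivation of \eqref{eq:pcpsieq}, including the bookkeeping of the powers of $1-\tau$ that produces $\beta$, and the regularity argument are the same as in the paper. The genuine difference is in how you obtain \eqref{eq:pcEbound}.

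The paper stays in the compactified variables. It multiplies \eqref{eq:pcpsieq} by $(1-\tau)^\beta\overline{\partial_\tau\psi}$ and uses only $\frac{d}{d\tau}\mathcal I_{b,\mu}(\psi)=2q\operatorname{Re}\int\mathcal N_{b,\mu}(\psi)\overline{\partial_\tau\psi}\,dx$ and an integration by parts. It shows that $Q(\tau)=(1-\tau)^\beta\|\nabla\psi(\tau)\|_{L^2}^2+\frac1q\mathcal I_{b,\mu}(\psi(\tau))$ satisfies $Q'(\tau)=-\beta(1-\tau)^{\beta-1}\|\nabla\psi(\tau)\|_{L^2}^2\le0$. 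You instead prove the shifted pseudoconformal law in the physical variables and close with Gronwall.

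The two arguments monitor the same quantity. From $\|\nabla_y\psi(\tau)\|_{L^2}=\frac12\|J_1\phi(t)\|_{L^2}$ and $\mathcal I_{b,\mu}(\psi(\tau))=(1+t)^{2-\beta}\mathcal I_{b,\mu}(\phi(t))$ one gets $Q(\tau)=\frac14(1+t)^{-\beta}\mathcal E_1(t)$. Your differential inequality $\mathcal E_1'\le\frac{\beta}{1+t}\mathcal E_1$ is then precisely $Q'\le0$ written in the variable $t$.

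The paper's route gains because the lens transform absorbs the dilation structure into the time-dependent coupling $(1-\tau)^{-\beta}$. Only the plain energy identity of the transformed equation is needed. The virial identity and the Pohozaev-type dilation identity for the singular kernel $|x|^{-b}|x-y|^{-(d-\mu)}|y|^{-b}$, which you rightly single out as the delicate step needing regularization, never enter.

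Your route is the classical pseudoconformal-law argument in the original variables. It yields directly $\mathcal I_{b,\mu}(\phi(t))\lesssim(1+t)^{-(2-\beta)}$ and $\|(x+2i(1+t)\nabla)\phi(t)\|_{L^2}\lesssim(1+t)^{\beta/2}$. The paper obtains these only afterwards, as the decay corollary for $b=0$ and as Lemma~\ref{lem:BPGV1}.

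One correction: since $|\psi(\tau,y)|=(1+t)^{d/2}|\phi(t,(1+t)y)|$, your first transfer identity should read $\mathcal I_{b,\mu}(\psi(\tau))=(1+t)^{\nu'}\mathcal I_{b,\mu}(\phi(t))$, with a positive exponent. It is with this sign that your Gronwall bound $(1+t)^{2-\beta}\mathcal I_{b,\mu}(\phi(t))\lesssim1$ gives $\sup_{0\le\tau<1}\mathcal I_{b,\mu}(\psi(\tau))<\infty$.
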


\begin{proof}
We first perform the calculation for smooth solutions, the general
energy solution follows by the standard approximation argument. Indicate 
\begin{equation}\label{eq:lambdat}
 \lambda:=1-\tau=\frac1{1+t}.
\end{equation}
A direct differentiation of
\eqref{eq:inhCon} gives the identity
\begin{equation}\label{eq:PLT}
 \left(i\partial_\tau+\Delta_y\right)\psi(\tau,y)
 =
 \lambda^{-\frac d2-2}
 e^{-\frac{i|y|^2}{4\lambda}}
 \left(i\partial_t+\Delta_x\right)
 \phi\left(t,\frac y\lambda\right),
\end{equation}
with the inverse relation
\[
 \phi\left(t,\frac{\zeta}{\lambda}\right)
 =
 \lambda^{d/2}
 e^{\frac{i|\zeta|^2}{4\lambda}}
 \psi(\tau,\zeta).
\]
Hence, after the change of variables \(z=\zeta/\lambda\),
\begin{align}
 \left(
 I_\mu*(|\cdot|^{-b}|\phi(t)|^q)
 \right)\left(\frac y\lambda\right)
 \notag\\
=
 \int_{\mathbb R^d}
 I_\mu\left(\frac{y-\zeta}{\lambda}\right)
 \left|\frac{\zeta}{\lambda}\right|^{-b}
 \left|
 \phi\left(t,\frac{\zeta}{\lambda}\right)
 \right|^q
 \lambda^{-d}\,d\zeta
 =
 \lambda^{\frac{dq}{2}+b-\mu}
 \left(
 I_\mu*(|\cdot|^{-b}|\psi(\tau)|^q)
 \right)(y),
 \label{eq:PCS}
\end{align}
because
\[
 I_\mu\left(\frac{y-\zeta}{\lambda}\right)
 =
 \lambda^{d-\mu}I_\mu(y-\zeta),
 \qquad
 \left|\frac{\zeta}{\lambda}\right|^{-b}
 =
 \lambda^b|\zeta|^{-b}.
\]
Moreover,
\[
 \left|\frac y\lambda\right|^{-b}
 \left|\phi\left(t,\frac y\lambda\right)\right|^{q-2}
 \phi\left(t,\frac y\lambda\right)
 =
 \lambda^{b+\frac{d(q-1)}2}
 e^{\frac{i|y|^2}{4\lambda}}
 |y|^{-b}|\psi|^{q-2}\psi.
\]
By a combination of this identity with
\eqref{eq:PCS}, we obtain
\begin{equation}
 \mathcal N_{b,\mu}(\phi)
 \left(t,\frac y\lambda\right)
 =
 \lambda^{\,2b-\mu+dq-\frac d2}
 e^{\frac{i|y|^2}{4\lambda}}
 \mathcal N_{b,\mu}(\psi)(\tau,y).
\end{equation}
Substitution into \eqref{eq:PLT} yields
\[
 i\partial_\tau\psi+\Delta_y\psi
 =
 (1-\tau)^{-\beta}\mathcal N_{b,\mu}(\psi),
\]
which proves \eqref{eq:pcpsieq}. By Proposition~\ref{prop:FMP} in
Appendix~\ref{sec:BLWS}, we have
\(
 \phi\in C\bigl(\mathbb R;\Sigma\bigr).
\)
Moreover, the change of variables \(y=\lambda x\), together with
direct differentiation of \eqref{eq:inhCon}, gives
\begin{equation}\label{eq:pcSI}
 \|\psi(\tau)\|_{L^2}
 =
 \|\phi(t)\|_{L^2},
 \qquad
 \|y\psi(\tau)\|_{L^2}
 =
 \lambda\|x\phi(t)\|_{L^2},
\end{equation}
and
\begin{equation}
 \|\nabla_y\psi(\tau)\|_{L^2}
 =
 \lambda^{-1}
 \left\|
  \nabla_x\phi(t)
  -
  \frac{i\lambda x}{2}\phi(t)
 \right\|_{L^2}.
\end{equation}
Hence \(\psi(\tau)\in\Sigma\) for every \(0\le\tau<1\).
Since the scaling operators and the multipliers
\(e^{-ia|x|^2}\), \(a\in\mathbb R\), appearing in
\eqref{eq:inhCon} form strongly continuous families on \(\Sigma\), the
fact that \(\phi\in C(\mathbb R;\Sigma)\) implies
\(
 \psi\in C\bigl([0,1);\Sigma\bigr).
\)
It remains to prove the uniform bound. It is easy to see that
\begin{equation}\label{eq:PID}
 \frac{d}{d\tau}\mathcal I_{b,\mu}(\psi(\tau))
 =
 2q\,\operatorname{Re}
 \int_{\mathbb R^d}
 \mathcal N_{b,\mu}(\psi)
 \overline{\partial_\tau\psi}\,dx.
\end{equation}
Multiplying \eqref{eq:pcpsieq} by
\(\lambda^\beta\overline{\partial_\tau\psi}\), 
taking the real part and using \eqref{eq:PID} together with
integration by parts in the spatial variable, we obtain,
\[
 \lambda^\beta
 \frac{d}{d\tau}\|\nabla\psi(\tau)\|_{L^2}^2
 +
 \frac1q
 \frac{d}{d\tau}\mathcal I_{b,\mu}(\psi(\tau))
 =
 0.
\]
Therefore, for
\[
 Q(\tau)
 :=
 \lambda^\beta\|\nabla\psi(\tau)\|_{L^2}^2
 +
 \frac1q\mathcal I_{b,\mu}(\psi(\tau)),
\]
one has
\begin{equation}
 Q'(\tau)
 =
 -\beta\lambda^{\beta-1}
 \|\nabla\psi(\tau)\|_{L^2}^2
 \le0.
\end{equation}
Thus \(Q(\tau)\le Q(0)\) for every \(0\le\tau<1\). We recall that the solution of \eqref{eq:ME} satisfies, for all \( t\in\mathbb R\),
the energy conservation law in \eqref{eq:CE2},
because the potential energy is nonnegative, one has
\(
 \mathcal I_{b,\mu}(\phi_0)
 \le
 2qE(\phi_0)
 <\infty.
\)
Moreover, at \(\tau=0\), \eqref{eq:inhCon} gives
\[
 \psi(0,x)
 =
 e^{-\frac{i|x|^2}{4}}\phi_0(x).
\]
For \(\phi_0\in\Sigma\), one gets
\[
 x\psi(0,x)
 =
 e^{-\frac{i|\cdot|^2}{4}}x\phi_0
 \in L^2(\R^d),
\]
while
\[
 \nabla\psi(0,x)
 =
 e^{-\frac{i|\cdot|^2}{4}}
 \left(
  \nabla\phi_0-\frac{i x}{2}\phi_0
 \right)
 \in L^2(\R^d).
\]
Thus \(\psi(0,x)\in\Sigma\). One also obtain
\(
 \mathcal I_{b,\mu}(\psi(0))
 =
 \mathcal I_{b,\mu}(\phi_0)
 <\infty.
\)
Consequently,
\[
 Q(0)
 =
 \|\nabla\psi(0)\|_{L^2}^2
 +
 \frac1q\mathcal I_{b,\mu}(\psi(0))
 <\infty.
\]
Since both terms
in \(Q\) are nonnegative,
\[
 (1-\tau)^\beta\|\nabla\psi(\tau)\|_{L^2}^2
 +
 \mathcal I_{b,\mu}(\psi(\tau))
 \le
 qQ(0),
\]
uniformly for \(0\le\tau<1\). This proves
\eqref{eq:pcEbound}.
\end{proof}
The preceding proposition immediately infers the following uniform
bounds for the pseudoconformal solution.
\begin{corollary}
\label{cor:psia}
Under the hypotheses of Proposition~\ref{prop:BPCP1}, the solution  \(\psi\in C([0,1);\Sigma)\) of \eqref{eq:pcpsieq} with initial data \(\psi(0,x)\in \Sigma\) satisfies
\begin{equation}\label{eq:psia}
 \sup_{0\le\tau<1}\|\psi(\tau)\|_{L^2}<\infty,
 \qquad
 \sup_{0\le\tau<1}
 \mathcal I_{b,\mu}(\psi(\tau))<\infty,
 \qquad
 (1-\tau)^\beta
 \|\nabla\psi(\tau)\|_{L^2}^2
 \lesssim1.
\end{equation}
\end{corollary}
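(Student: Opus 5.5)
The corollary follows from Proposition~\ref{prop:BPCP1} together with the mass identity \eqref{eq:pcSI}, so the plan is to read off the three bounds in \eqref{eq:psia} one at a time.

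\emph{Mass bound.} The change of variables $y=\lambda x$ with $\lambda=1-\tau=(1+t)^{-1}$ gives $\|\psi(\tau)\|_{L^2}=\|\phi(t)\|_{L^2}$ by \eqref{eq:pcSI}. Conservation of mass \eqref{eq:CM2} along the global $H^1$ solution then yields
\[
 \|\psi(\tau)\|_{L^2}=\|\phi_0\|_{L^2}
\]
for every $0\le\tau<1$, so the first supremum equals $\|\phi_0\|_{L^2}<\infty$.

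\emph{Potential and gradient bounds.} Both come from the monotone quantity $Q(\tau)$ constructed in the proof of Proposition~\ref{prop:BPCP1}.
\begin{itemize}
\item The derivative identity gives $Q'(\tau)=-\beta\lambda^{\beta-1}\|\nabla\psi\|_{L^2}^2$.
\item In the mass-subcritical range $q<q_{\mathrm m}$ one has $\beta=2+\mu-2b-d(q-1)>0$, so $Q'\le0$ and $Q(\tau)\le Q(0)$.
\item The two summands of $Q$ are nonnegative, because the kernel in \eqref{eq:weightedE} is positive. Hence each is bounded separately by $qQ(0)$.
\end{itemize}
This gives $\sup_\tau\mathcal I_{b,\mu}(\psi(\tau))\le qQ(0)$ and $(1-\tau)^\beta\|\nabla\psi(\tau)\|_{L^2}^2\le qQ(0)$.

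\emph{Finiteness of $Q(0)$.} The proposition shows $\psi(0)=e^{-i|x|^2/4}\phi_0\in\Sigma$, so $\|\nabla\psi(0)\|_{L^2}<\infty$. Since the weights involve only $|\psi|=|\phi_0|$, we have $\mathcal I_{b,\mu}(\psi(0))=\mathcal I_{b,\mu}(\phi_0)\le 2qE(\phi_0)<\infty$. The implicit constant in the third bound of \eqref{eq:psia} therefore depends only on $\|\phi_0\|_\Sigma$ and $E(\phi_0)$.

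\emph{Main point of care.} The only step needing attention is justifying $Q(\tau)\le Q(0)$ for energy solutions rather than smooth ones. I would argue as the proposition does: prove the inequality for regularized data, for which the differentiation of $Q$ is legitimate. Then pass to the limit using $\psi\in C([0,1);\Sigma)$, continuous dependence in $H^1$, and continuity of $\mathcal I_{b,\mu}$ on $H^1$ under \eqref{eq:ass}, the latter via Hardy--Littlewood--Sobolev and weighted Sobolev inequalities. Since the inequality is non-strict, it survives the limit.
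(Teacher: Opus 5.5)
Your proposal is correct and follows the paper's route: the mass bound comes from \eqref{eq:CM2} and \eqref{eq:pcSI}, and the other two bounds are read off from \eqref{eq:pcEbound}, i.e.\ from the monotonicity $Q(\tau)\le Q(0)<\infty$ that you retrace from the proof of Proposition~\ref{prop:BPCP1}. One refinement to your approximation step: $H^1$ continuous dependence alone does not give $\nabla\psi_n(\tau)\to\nabla\psi(\tau)$ in $L^2$, since this quantity involves $x\phi_n(t)$, so ``the inequality is non-strict'' does not by itself pass it to the limit; instead, combine the uniform bound on $\nabla\psi_n(\tau)$ with $L^2$ convergence and use weak lower semicontinuity of $\|\nabla\psi(\tau)\|_{L^2}$.
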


\begin{proof}
The first bound follows from the conservation of mass in \eqref{eq:CM2} and
\eqref{eq:pcSI}, while the remaining two follow from
\eqref{eq:pcEbound}.
\end{proof}

In the homogeneous case, the same bounds also give a decay estimate
for the nonlocal potential functional in the original variables.
\begin{equation}\label{eq:GAP}
 \|\nabla\psi(\tau)\|_{L^2}
 \lesssim
 (1-\tau)^{-\beta/2},
 \qquad
 0\le\tau<1.
\end{equation}

\begin{corollary}
Assume \(b=0\), let \(\nu\) be defined by
\eqref{eq:IBN}. Under the hypotheses of Proposition~\ref{prop:BPCP1}, the solution 
\(
\phi\in C\bigl(\mathbb R;H^1(\mathbb R^d)\bigr)
\)
of \eqref{eq:ME} with initial data \(\phi_0\in\Sigma\), fulfils 
\begin{equation}\label{eq:BJID}
 \mathcal I_{0,\mu}(\phi(t))
 \lesssim
 (1+t)^{-\nu},
 \qquad
 t\ge0,
\end{equation}
with \( \mathcal I_{0,\mu}\) defined as in \eqref{eq:weightedE}
\end{corollary}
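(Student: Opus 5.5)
The plan is to transfer the uniform bound on \(\mathcal I_{0,\mu}(\psi(\tau))\) from Corollary~\ref{cor:psia} back to the original variables through the scaling relation \eqref{eq:inhCon}. Set \(\lambda=1-\tau=(1+t)^{-1}\). By the inverse relation
\[
 \phi\left(t,\tfrac{\zeta}{\lambda}\right)=\lambda^{d/2}e^{\frac{i|\zeta|^2}{4\lambda}}\psi(\tau,\zeta),
\]
we have \(|\phi(t,x)|=\lambda^{d/2}|\psi(\tau,\lambda x)|\). The phase drops out because the functional involves only moduli.

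Substitute this into \eqref{eq:weightedE} with \(b=0\) and change variables \(x=\zeta/\lambda\), \(x'=\zeta'/\lambda\); this produces a Jacobian \(\lambda^{-2d}\). Next use the homogeneity \(|x-x'|^{-(d-\mu)}=\lambda^{d-\mu}|\zeta-\zeta'|^{-(d-\mu)}\). Collecting powers of \(\lambda\) gives
\[
 \mathcal I_{0,\mu}(\phi(t))=\lambda^{dq}\,\lambda^{-2d}\,\lambda^{d-\mu}\,\mathcal I_{0,\mu}(\psi(\tau))=\lambda^{d(q-1)-\mu}\,\mathcal I_{0,\mu}(\psi(\tau)).
\]
The exponent here is exactly \(\nu\) from \eqref{eq:IBN}. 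This is essentially the same computation as \eqref{eq:PCS}, now applied to the bilinear functional.

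By Corollary~\ref{cor:psia}, \(\sup_{0\le\tau<1}\mathcal I_{0,\mu}(\psi(\tau))<\infty\). Since \(\lambda=(1+t)^{-1}\), this yields \(\mathcal I_{0,\mu}(\phi(t))\lesssim(1+t)^{-\nu}\) for all \(t\ge0\), which is \eqref{eq:BJID}.

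The argument is essentially routine. The only point needing care is the exponent bookkeeping: the factor \(\lambda^{dq}\) comes from \(|\phi|^q|\phi|^q\), the factor \(\lambda^{-2d}\) from the two Jacobians, and \(\lambda^{d-\mu}\) from the kernel. One should also note that for energy solutions the identity holds pointwise in \(t\) by a direct change of variables, so no approximation argument is needed.
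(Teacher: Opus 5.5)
Your proposal is correct and follows the paper's proof essentially verbatim. You use the modulus identity \(|\phi(t,x)|=\lambda^{d/2}|\psi(\tau,\lambda x)|\) and the two changes of variables to get the scaling factor \(\lambda^{dq}\lambda^{-2d}\lambda^{d-\mu}=\lambda^{\nu}\). You then invoke the uniform bound on \(\mathcal I_{0,\mu}(\psi(\tau))\) from \eqref{eq:pcEbound} (equivalently Corollary~\ref{cor:psia}).
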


\begin{proof}
The pseudoconformal transform in \eqref{eq:inhCon} gives
\[
 |\phi(t,x)|
 =
 \lambda^{d/2}|\psi(\tau,\lambda x)|.
\]
Consequently, the changes of variables
\(y=\lambda x\) and \(w=\lambda z\) yield
\begin{align*}
 \mathcal I_{0,\mu}(\phi(t))
 =
 \lambda^{dq}
 \lambda^{d-\mu}
 \lambda^{-2d}
 \mathcal I_{0,\mu}(\psi(\tau))
 =
 \lambda^{d(q-1)-\mu}
 \mathcal I_{0,\mu}(\psi(\tau))
 =
 \lambda^\nu
 \mathcal I_{0,\mu}(\psi(\tau)).
\end{align*}
By \eqref{eq:pcEbound}, we get
\[
 \sup_{0\le\tau<1}
 \mathcal I_{0,\mu}(\psi(\tau))
 <\infty.
\]
and therefore
\[
 \mathcal I_{0,\mu}(\phi(t))
 \lesssim
 \lambda^\nu
 =
 (1+t)^{-\nu}.
\]
\end{proof}
We derive below the nonlinear estimates needed
for the \(L^2\)-scattering argument. Throughout the remainder of this
section, \(\psi\) denotes the pseudoconformal transform defined in
\eqref{eq:inhCon}. It satisfies \eqref{eq:pcpsieq} and the bounds
\eqref{eq:psia}. Moreover, \eqref{eq:frf} gives
\(
 0<\beta<1.
\)
For
nonnegative measurable functions \(F,G\), let us set
\begin{equation}
 B_{b,\mu}(F,G)
 :=
 B\bigl(|\cdot|^{-b}F,|\cdot|^{-b}G\bigr).
\end{equation}
In particular,  we notice that
\begin{equation}\label{eq:WIAB}
 \mathcal I_{b,\mu}(f)
 =
 B_{b,\mu}(|f|^q,|f|^q).
\end{equation}

\begin{lemma}
Let \(F,G\ge0\) be measurable.

\begin{enumerate}
\item[\rm(i)]
If
\[
 B_{b,\mu}(F,F)<\infty,
 \qquad
 B_{b,\mu}(G,G)<\infty,
\]
then we have
\begin{equation}\label{eq:BC}
 B_{b,\mu}(F,G)^2
 \le
 B_{b,\mu}(F,F)\,B_{b,\mu}(G,G).
\end{equation}

\item[\rm(ii)]
Assume \(2b<d+\mu\). If
\(\eta\in C_c^\infty(\R^d)\), then one gets
\begin{equation}\label{eq:TIF}
 \mathcal I_{b,\mu}(\eta)<\infty.
\end{equation}
\end{enumerate}
\end{lemma}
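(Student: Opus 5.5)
Here $B(F,G)$ is understood as the Riesz bilinear form $B(F,G)=\iint F(x)G(y)|x-y|^{-(d-\mu)}\,dx\,dy$, so that $B_{b,\mu}(F,G)=\iint F(x)G(y)|x|^{-b}|x-y|^{-(d-\mu)}|y|^{-b}\,dx\,dy$.

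For (i), the plan is to use that the Riesz kernel $I_\mu$ is positive definite: for $0<\mu<d$ its Fourier transform is $\widehat{I_\mu}(\xi)=c_{d,\mu}|\xi|^{-\mu}$ with $c_{d,\mu}>0$. For nonnegative $F,G$ with finite diagonal energies, set $f=|\cdot|^{-b}F$ and $g=|\cdot|^{-b}G$. Then $B(h,h)=c\int|\xi|^{-\mu}|\hat h(\xi)|^2\,d\xi\ge0$ for every real combination $h=f+tg$. This identity holds at first for nice $h$, and I would extend it by truncation: replace $F,G$ by $\min(F,n)\mathbf 1_{|x|\le n}$ and use monotone convergence, which works because all integrands are nonnegative. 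Hence $B(\cdot,\cdot)$ is a positive semidefinite symmetric bilinear form on the cone of nonnegative functions with finite energy. Expanding $0\le B(f+tg,f+tg)=B(f,f)+2tB(f,g)+t^2B(g,g)$ for all real $t$ and taking the discriminant gives \eqref{eq:BC}. This also shows $B(f,g)<\infty$.

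A cleaner alternative avoids Fourier transforms by using the semigroup identity $I_\mu=c\,I_{\mu/2}*I_{\mu/2}$, which is valid for $0<\mu<d$. Tonelli then gives $B(f,g)=c\int (I_{\mu/2}*f)(I_{\mu/2}*g)\,dz$, and Cauchy--Schwarz in $L^2(dz)$ yields the claim directly, with no truncation needed since everything is nonnegative. I would adopt this route. The only point to check is the positive constant in the composition formula for Riesz potentials.

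For (ii), take $\eta\in C_c^\infty$ with support in the ball $B_R$. Since $|\eta|^q\le C\mathbf 1_{B_R}$, it suffices to show
\[
\iint_{B_R\times B_R}|x|^{-b}|x-y|^{-(d-\mu)}|y|^{-b}\,dx\,dy<\infty .
\]
The plan is to integrate in $y$ first. I would use the standard estimate $\int_{B_R}|x-y|^{-(d-\mu)}|y|^{-b}\,dy\lesssim 1+|x|^{\mu-b}$ for $b<d$ and $|x|\le R$, with a logarithmic modification when $b=\mu$. This follows by splitting $B_R$ into the regions $|y|<|x|/2$, $|x-y|<|x|/2$, and the remainder. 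The outer integral $\int_{B_R}|x|^{-b}(1+|x|^{\mu-b})\,dx$ then converges provided $b<d$ and $2b-\mu<d$, that is $2b<d+\mu$. The first condition follows from the second because $\mu<d$.

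Alternatively, the Hardy--Littlewood--Sobolev (Stein--Weiss) inequality gives the same conclusion. The main obstacle is only this bookkeeping near the origin, including the borderline logarithmic case $b=\mu$. There the bound becomes $\log(2R/|x|)$, which is still integrable against $|x|^{-b}$ since $b<d$.
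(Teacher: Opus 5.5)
Your argument is correct in both parts, and in both it takes a different route from the paper.

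For (i), the paper truncates to $F_n=\min\{F,n\}\mathbf 1_{\{n^{-1}<|x|<n\}}$. It then applies the appendix inequality \eqref{eq:BBC}, which rests on $\widehat{I_\mu}=c_{d,\mu}|\xi|^{-\mu}$ for $C_c^\infty$ functions plus a density argument, and concludes by monotone convergence. The annular cutoff is what puts $|x|^{-b}F_n$ in $L^{p_\mu}$ without invoking $2b<d+\mu$. The cutoff $\mathbf 1_{\{|x|\le n\}}$ in your first route would need that hypothesis.

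The route you actually adopt is $I_\mu=c\,I_{\mu/2}*I_{\mu/2}$, followed by Tonelli and Cauchy--Schwarz in $L^2(dz)$. This exhibits $B_{b,\mu}$ directly as a Gram form. It needs no Fourier transform, no truncation, no density and no $L^{p_\mu}$ membership, and it works entirely in $[0,\infty]$.

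The constant you flag is not an issue. The integral $\int_{\R^d}|x-z|^{-(d-\mu/2)}|z-y|^{-(d-\mu/2)}\,dz$ converges near $z=x$, near $z=y$, and at infinity, where the integrand is $O(|z|^{-(2d-\mu)})$ and $\mu<d$. Translation, rotation and scaling invariance then force it to equal $C|x-y|^{-(d-\mu)}$ with $0<C<\infty$. Positivity is automatic from the integrand, so no explicit value is needed. What the paper's route buys is economy: Lemma~\ref{lem:BRFI} is needed anyway in Section~\ref{sec:BJUS}.

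For (ii), the paper uses a one-line observation: $2b<d+\mu$ is equivalent to $bp_\mu<d$, so $|x|^{-b}|\eta|^q\in L^{p_\mu}$, and plain HLS \eqref{eq:BHLS} finishes. Stein--Weiss is not needed. Your three-region estimate $\int_{B_R}|x-y|^{-(d-\mu)}|y|^{-b}\,dy\lesssim 1+|x|^{\mu-b}$ (with the logarithm at $b=\mu$) is correct but longer. In exchange it is elementary, and it shows that $2b<d+\mu$ is precisely the local integrability condition of the weighted kernel at $x=y=0$.
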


\begin{proof}
We first prove \eqref{eq:BC}. For \(n\ge2\), define
\[
 F_n
 :=
 \min\{F,n\}
 \mathbf 1_{\{n^{-1}<|x|<n\}},
 \qquad
 G_n
 :=
 \min\{G,n\}
 \mathbf 1_{\{n^{-1}<|x|<n\}}.
\]
Then
\[
 \lim_{n\to\infty}F_n(x)=F(x),
 \qquad
 \lim_{n\to\infty}G_n(x)=G(x)
\]
almost everywhere in \(\R^d\). Moreover,
\(
 |x|^{-b}F_n,\ |x|^{-b}G_n
 \in L^{p_\mu}(\R^d),
\)
since \(F_n\) and \(G_n\) are bounded and supported in 
\(\{n^{-1}<|x|<n\}\). By the definition of \(B_{b,\mu}\) and the inequality \eqref{eq:BBC}, one obtains
\[
 B_{b,\mu}(F_n,G_n)^2
 \le
 B_{b,\mu}(F_n,F_n)
 B_{b,\mu}(G_n,G_n).
\]
Since the kernel is nonnegative, the monotone convergence theorem gives
\[
 \lim_{n\to\infty}
 B_{b,\mu}(F_n,G_n)
 =
 B_{b,\mu}(F,G).
\]
and
\[
 \lim_{n\to\infty}
 B_{b,\mu}(F_n,F_n)
 =
 B_{b,\mu}(F,F),
 \qquad
 \lim_{n\to\infty}
 B_{b,\mu}(G_n,G_n)
 =
 B_{b,\mu}(G,G).
\]
Passing to the limit as \(n\to\infty\), we obtain
\[
 B_{b,\mu}(F,G)^2
 \le
 B_{b,\mu}(F,F)B_{b,\mu}(G,G),
\]
which demonstrates \eqref{eq:BC}. To prove \eqref{eq:TIF}, let
\(\eta\in C_c^\infty(\R^d)\), and set
\(
 h_\eta(x)
 :=
 |x|^{-b}|\eta(x)|^q.
\)
Recall that
\(
 p_\mu=2d/(d+\mu).
\)
It is straightforward that the assumption \(2b<d+\mu\) is equivalent to
\(
 bp_\mu<d.
\)
Choose \(R>0\) such that
\(\operatorname{supp}\eta\subset B_R(0)\), then we earn
\[
 \|h_\eta\|_{L^{p_\mu}}^{p_\mu}
 \le
 \|\eta\|_{L^\infty}^{qp_\mu}
 \int_{B_R(0)}|x|^{-bp_\mu}\,dx
 <\infty.
\]
Therefore \(h_\eta\in L^{p_\mu}(\R^d)\), and by
\eqref{eq:BHLS},
\[
 \mathcal I_{b,\mu}(\eta)
 =
 B(h_\eta,h_\eta)
 \lesssim
 \|h_\eta\|_{L^{p_\mu}}^2
 <\infty.
\]
This proves \eqref{eq:TIF}.
\end{proof}
We also need the following estimate for the nonlinear term
\(\mathcal I_{b,\mu}\).

\begin{lemma}
\label{lem:WNP1}
Let \(u,v\) be measurable functions such that
\begin{equation}\label{eq:bou1}
 \mathcal I_{b,\mu}(u)<\infty,
 \qquad
 \mathcal I_{b,\mu}(v)<\infty.
\end{equation}
Then one has
\begin{equation}\label{eq:WNP2}
 \left|
 \left\langle
 \mathcal N_{b,\mu}(u),v
 \right\rangle
 \right|
 \le
 \mathcal I_{b,\mu}(u)^{1-\frac1{2q}}
 \mathcal I_{b,\mu}(v)^{\frac1{2q}}.
\end{equation}
\end{lemma}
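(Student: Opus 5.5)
The plan is to write the pairing as a bilinear form and then apply H\"older twice: first the Cauchy--Schwarz inequality \eqref{eq:BC} for \(B_{b,\mu}\), then H\"older's inequality in the measure defined by the nonlocal kernel.

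\emph{Step 1 (bilinear form).} By the definition \eqref{eq:Nonl} of \(\mathcal N_{b,\mu}\),
\[
 \bigl|\langle \mathcal N_{b,\mu}(u),v\rangle\bigr|
 \le
 \iint
 \frac{|u(y)|^q\,|u(x)|^{q-1}|v(x)|}{|y|^b|x-y|^{d-\mu}|x|^b}\,dy\,dx
 =
 B_{b,\mu}\bigl(|u|^q,\,|u|^{q-1}|v|\bigr).
\]
The kernel is nonnegative, so every quantity here is well defined in \([0,\infty]\) and Tonelli's theorem applies. In particular the pairing converges absolutely whenever the right-hand side is finite.

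\emph{Step 2 (Cauchy--Schwarz for \(B_{b,\mu}\)).} Apply \eqref{eq:BC} with \(F=|u|^q\) and \(G=|u|^{q-1}|v|\). This gives
\[
 B_{b,\mu}(F,G)^2\le \mathcal I_{b,\mu}(u)\,B_{b,\mu}(G,G),
\]
using \eqref{eq:WIAB}. One caveat is that \eqref{eq:BC} is stated under the hypothesis \(B_{b,\mu}(G,G)<\infty\). This hypothesis is not yet known, so I will deduce it from Step 3 before invoking \eqref{eq:BC}. Alternatively, the truncation proof of \eqref{eq:BC} gives the inequality in \([0,\infty]\) regardless.

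\emph{Step 3 (H\"older in the product measure).} Write \(d\sigma(x,y)=|x|^{-b}|x-y|^{-(d-\mu)}|y|^{-b}\,dx\,dy\). Then
\[
 B_{b,\mu}(G,G)=\int |u(x)|^{q-1}|v(x)|\,|u(y)|^{q-1}|v(y)|\,d\sigma .
\]
The integrand equals \(\bigl(|u(x)|^q|u(y)|^q\bigr)^{(q-1)/q}\bigl(|v(x)|^q|v(y)|^q\bigr)^{1/q}\). H\"older with exponents \(q/(q-1)\) and \(q\) therefore yields
\[
 B_{b,\mu}(G,G)\le \mathcal I_{b,\mu}(u)^{1-\frac1q}\,\mathcal I_{b,\mu}(v)^{\frac1q}<\infty .
\]

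\emph{Step 4 (combine).} Combining Steps 1--3,
\[
 \bigl|\langle \mathcal N_{b,\mu}(u),v\rangle\bigr|^2\le \mathcal I_{b,\mu}(u)^{2-\frac1q}\,\mathcal I_{b,\mu}(v)^{\frac1q}.
\]
Taking square roots gives \eqref{eq:WNP2}.

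There is no serious obstacle. The only delicate point is the order of the argument: finiteness of \(B_{b,\mu}(G,G)\) must come from Step 3 before \eqref{eq:BC} is applied, so that the inequality is never used on possibly infinite quantities.
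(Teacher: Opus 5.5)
Your proof is correct and uses the same two ingredients as the paper's: the Cauchy--Schwarz inequality \eqref{eq:BC} and H\"older's inequality with exponents $q/(q-1)$ and $q$. The only difference is that you apply them in the opposite order. The paper first uses H\"older in $x$ against the weight $A_u(x)\,dx$, where $A_u(x)=|x|^{-b}\bigl(I_\mu*(|\cdot|^{-b}|u|^q)\bigr)(x)$, and then applies \eqref{eq:BC} to $B_{b,\mu}(|u|^q,|v|^q)$; since the self-pairings there are exactly $\mathcal I_{b,\mu}(u)$ and $\mathcal I_{b,\mu}(v)$, it needs no preliminary finiteness check like your Step 3.
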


\begin{proof}
Define the nonnegative, possibly extended-valued, measurable
function
\[
 A_u(x)
 :=
 |x|^{-b}
 \bigl(
 I_\mu*(|\cdot|^{-b}|u|^q)
 \bigr)(x).
\]
Since all the integrands involved are nonnegative, Tonelli's theorem
gives, initially as identities in \([0,\infty]\),
\begin{equation}\label{eq:ASI}
 \int_{\R^d}A_u(x)|u(x)|^q\,dx
 =
 \mathcal I_{b,\mu}(u),
\end{equation}
and
\begin{equation}\label{eq:AMI}
 \int_{\R^d}A_u(x)|v(x)|^q\,dx
 =
 B_{b,\mu}(|u|^q,|v|^q).
\end{equation}
Indeed,
\begin{align*}
 \int_{\R^d}A_u(x)|v(x)|^q\,dx
 =
 \int_{\R^d}
 |x|^{-b}|v(x)|^q
 \left(
  \int_{\R^d}
  \frac{|y|^{-b}|u(y)|^q}
  {|x-y|^{d-\mu}}
  \,dy
 \right)dx
 \\
 =
 \iint_{\R^d\times\R^d}
 \frac{|u(y)|^q|v(x)|^q}
 {|x|^b|x-y|^{d-\mu}|y|^b}
 \,dy\,dx
 =
 B_{b,\mu}(|u|^q,|v|^q).
\end{align*}
By \eqref{eq:WIAB}, \eqref{eq:BC} and \eqref{eq:bou1}, we achieve
\begin{align}
 B_{b,\mu}(|u|^q,|v|^q)
 \le
 B_{b,\mu}(|u|^q,|u|^q)^{1/2}
 B_{b,\mu}(|v|^q,|v|^q)^{1/2}
 =
 \mathcal I_{b,\mu}(u)^{1/2}
 \mathcal I_{b,\mu}(v)^{1/2}
 <\infty.
 \label{eq:MIF}
\end{align}
Consequently, both integrals in
\eqref{eq:ASI} and
\eqref{eq:AMI} are finite. In particular, \(A_u<\infty\) almost everywhere on the sets where
\(u\neq0\) or \(v\neq0\). Thus the quantity
\(
 A_u|u|^{q-2}u\,\overline v
\)
is well defined almost everywhere. Introduce the positive measure
\(
 d\nu_u(x):=A_u(x)\,dx.
\)
Since \(q>1\), the H\"older's inequality on the measure space
\((\R^d,\nu_u)\) therefore gives
\begin{align}
 \int_{\R^d}
 A_u|u|^{q-1}|v|\,dx
 &=
 \int_{\R^d}
 |u|^{q-1}|v|\,d\nu_u
 \le
 \left(
  \int_{\R^d}|u|^q\,d\nu_u
 \right)^{\frac{q-1}{q}}
 \left(
  \int_{\R^d}|v|^q\,d\nu_u
 \right)^{\frac1q}
 \notag\\
 &=
 \mathcal I_{b,\mu}(u)^{\frac{q-1}{q}}
 B_{b,\mu}(|u|^q,|v|^q)^{\frac1q}.
 \label{eq:ANP}
\end{align}
The right-hand side of the above equality is finite by
\eqref{eq:MIF}, this gives
\[
 \int_{\R^d}
 \left|
 \mathcal N_{b,\mu}(u)\overline v
 \right|\,dx
 =
 \int_{\R^d}
 A_u|u|^{q-1}|v|\,dx
 <\infty,
\]
and so 
\[
 \left\langle
 \mathcal N_{b,\mu}(u),v
 \right\rangle
 :=
 \int_{\mathbb R^d}
 \mathcal N_{b,\mu}(u)\overline v\,dx
\]
is therefore well defined. From
\eqref{eq:ANP} 
\eqref{eq:MIF}, 
\begin{align*}
 \left|
 \left\langle
 \mathcal N_{b,\mu}(u),v
 \right\rangle
 \right|
 \le
 \mathcal I_{b,\mu}(u)^{\frac{q-1}{q}}
 \left(
  \mathcal I_{b,\mu}(u)^{1/2}
  \mathcal I_{b,\mu}(v)^{1/2}
 \right)^{\frac1q}
 \\
 =
 \mathcal I_{b,\mu}(u)^{
  \frac{q-1}{q}+\frac1{2q}}
 \mathcal I_{b,\mu}(v)^{\frac1{2q}}
 =
 \mathcal I_{b,\mu}(u)^{1-\frac1{2q}}
 \mathcal I_{b,\mu}(v)^{\frac1{2q}}.
\end{align*}
This shows \eqref{eq:WNP2}.
\end{proof}

\begin{lemma}\label{lem:NFT}
Let \(\eta\in C_c^\infty(\R^d)\). Under the hypotheses of Proposition~\ref{prop:BPCP1}, the solution  \(\psi\in C([0,1);\Sigma)\) with initial data \(\psi(0,x)\in \Sigma\) satisfies, for every
\(0\le\tau<1\),
\begin{equation}\label{eq:FTB}
 \left|
 \left\langle
 \mathcal N_{b,\mu}(\psi(\tau)),\eta
 \right\rangle
 \right|
 \le
 \mathcal I_{b,\mu}(\psi(\tau))^{1-\frac1{2q}}
 \mathcal I_{b,\mu}(\eta)^{\frac1{2q}}.
\end{equation}
In particular,
\begin{equation}\label{eq:FTU}
 \sup_{0\le\tau<1}
 \left|
 \left\langle
 \mathcal N_{b,\mu}(\psi(\tau)),\eta
 \right\rangle
 \right|
 <\infty.
\end{equation}
\end{lemma}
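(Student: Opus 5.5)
The plan is to apply Lemma~\ref{lem:WNP1} with \(u=\psi(\tau)\) and \(v=\eta\) for each fixed \(\tau\in[0,1)\). The only work is to check the hypothesis \eqref{eq:bou1} for both functions.

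For \(v=\eta\in C_c^\infty(\R^d)\), finiteness of \(\mathcal I_{b,\mu}(\eta)\) is exactly \eqref{eq:TIF}. That part of the preceding lemma applies because \eqref{eq:ass} contains the condition \(2b<d+\mu\).

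For \(u=\psi(\tau)\), I use Corollary~\ref{cor:psia}, more precisely \eqref{eq:pcEbound}. It gives
\[
\mathcal I_{b,\mu}(\psi(\tau))\le C<\infty
\]
for every \(0\le\tau<1\), with a constant \(C\) independent of \(\tau\). Since \(\psi(\tau)\in\Sigma\) is measurable, both hypotheses of Lemma~\ref{lem:WNP1} hold. That lemma then guarantees that the pairing \(\langle\mathcal N_{b,\mu}(\psi(\tau)),\eta\rangle\) is well defined as an absolutely convergent integral, and it yields \eqref{eq:FTB} directly.

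For \eqref{eq:FTU}, I take the supremum over \(\tau\) in \eqref{eq:FTB}. Since \(1-\frac1{2q}>0\), the map \(s\mapsto s^{1-\frac1{2q}}\) is monotone on \([0,\infty)\), so the right-hand side is bounded by
\[
\Bigl(\sup_{0\le\tau<1}\mathcal I_{b,\mu}(\psi(\tau))\Bigr)^{1-\frac1{2q}}\mathcal I_{b,\mu}(\eta)^{\frac1{2q}}.
\]
This is finite by \eqref{eq:psia} and \eqref{eq:TIF}.

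I do not expect a genuine obstacle. The only point needing care is that \eqref{eq:pcEbound} was derived first for smooth solutions and then extended by approximation. I therefore take the bound on \(\mathcal I_{b,\mu}(\psi(\tau))\) as valid for every \(\tau\), not merely almost every \(\tau\), exactly as stated in Corollary~\ref{cor:psia}.
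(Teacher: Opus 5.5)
Your proposal is correct and follows the paper's proof essentially verbatim: for fixed \(\tau\), apply Lemma~\ref{lem:WNP1} with \(u=\psi(\tau)\) and \(v=\eta\), taking the finiteness hypotheses from \eqref{eq:psia} and \eqref{eq:TIF} (the latter via \(2b<d+\mu\)). Then take the supremum, bounding the right-hand side of \eqref{eq:FTB} by \(M^{1-\frac1{2q}}\mathcal I_{b,\mu}(\eta)^{\frac1{2q}}\) with \(M=\sup_{0\le s<1}\mathcal I_{b,\mu}(\psi(s))<\infty\).
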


\begin{proof}
Fix \(0\le\tau<1\). By \eqref{eq:psia}, we get
\(
 \mathcal I_{b,\mu}(\psi(\tau))<\infty.
\)
Moreover, since \(\eta\in C_c^\infty(\R^d)\),
\eqref{eq:TIF} gives
\(
 \mathcal I_{b,\mu}(\eta)<\infty.
\)
Hence Lemma~\ref{lem:WNP1}, applied with
\(
 u=\psi(\tau), \,
 v=\eta,
\)
yields
\begin{align*}
 \left|
 \left\langle
 \mathcal N_{b,\mu}(\psi(\tau)),\eta
 \right\rangle
 \right|
 &\le
 \mathcal I_{b,\mu}(\psi(\tau))^{1-\frac1{2q}}
 \mathcal I_{b,\mu}(\eta)^{\frac1{2q}}.
\end{align*}
Since \(\eta\) is fixed and
\(\mathcal I_{b,\mu}(\eta)<\infty\), this implies
\[
 \left|
 \left\langle
 \mathcal N_{b,\mu}(\psi(\tau)),\eta
 \right\rangle
 \right|
 \lesssim_\eta
 \mathcal I_{b,\mu}(\psi(\tau))^{1-\frac1{2q}},
\]
which demonstrates \eqref{eq:FTB}. Finally, let
\[
 M
 :=
 \sup_{0\le s<1}
 \mathcal I_{b,\mu}(\psi(s)).
\]
From \eqref{eq:psia}, we have \(M<\infty\). Therefore, one gets
\[
 \left|
 \left\langle
 \mathcal N_{b,\mu}(\psi(\tau)),\eta
 \right\rangle
 \right|
 \le
 M^{1-\frac1{2q}}
 \mathcal I_{b,\mu}(\eta)^{\frac1{2q}}
\]
for every \(0\le\tau<1\). The above inequality brings to
\[
 \sup_{0\le\tau<1}
 \left|
 \left\langle
 \mathcal N_{b,\mu}(\psi(\tau)),\eta
 \right\rangle
 \right|
 \le
 M^{1-\frac1{2q}}
 \mathcal I_{b,\mu}(\eta)^{\frac1{2q}}
 <\infty.
\]
This proves \eqref{eq:FTU}.
\end{proof}

\section{\(L^{2}\)-scattering}\label{sec:IEB}

This section displays \(L^2\)-scattering by showing that the
pseudoconformal profile \(\psi(\tau)\) converges strongly in \(L^2\)
as \(\tau\to1^-\). Only positive times are considered below, the corresponding conclusions for negative times follow by applying
the same arguments to
\(
 \widetilde\phi(t,x):=\overline{\phi(-t,x)}.
\)

\begin{proposition}
\label{prop:WLFR}
Under the hypotheses of Proposition~\ref{prop:BPCP1}, Let \(\psi\in C([0,1);\Sigma)\) be the solution of \eqref{eq:pcpsieq} with initial data \(\psi(0,x)\in \Sigma\). Then
there exists unique \(\psi_{+}\in L^2(\mathbb R^d)\) such that
\begin{equation}\label{eq:WLV}
 \psi(\tau)\rightharpoonup\psi_{+}
 \quad\text{in }L^2(\mathbb R^d),
\quad\text{as }  \tau\to1^-.
\end{equation}
\end{proposition}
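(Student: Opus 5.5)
The plan is to show that, for every test function \(\eta\in C_c^\infty(\R^d)\), the scalar map \(\tau\mapsto\langle\psi(\tau),\eta\rangle\) has a limit as \(\tau\to1^-\). We then combine this with the uniform \(L^2\) bound in \eqref{eq:psia} and a density argument to obtain weak convergence.

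First, we compute the derivative of the pairing using \eqref{eq:pcpsieq}, pairing in the distributional sense (or after the standard approximation used in Proposition~\ref{prop:BPCP1}). For \(0\le\tau_1<\tau_2<1\) this gives
\[
 \langle\psi(\tau_2),\eta\rangle-\langle\psi(\tau_1),\eta\rangle
 =
 \ii\int_{\tau_1}^{\tau_2}\langle\psi(s),\Delta\eta\rangle\,ds
 -\ii\int_{\tau_1}^{\tau_2}(1-s)^{-\beta}
 \langle\mathcal N_{b,\mu}(\psi(s)),\eta\rangle\,ds .
\]
The first integrand is bounded by \(\sup_s\|\psi(s)\|_{L^2}\|\Delta\eta\|_{L^2}\), which is finite by Corollary~\ref{cor:psia}. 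The second integrand is bounded, by \eqref{eq:FTU} of Lemma~\ref{lem:NFT}, by \(C_\eta(1-s)^{-\beta}\). Since \(0<\beta<1\) under \eqref{eq:frf}, this function is integrable on \([0,1)\). Hence the increment tends to \(0\) as \(\tau_1,\tau_2\to1^-\), so the pairing is Cauchy and has a limit \(\ell(\eta)\).

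The main technical point is justifying the Duhamel identity for the nonlinear term at the regularity \(\psi\in C([0,1);\Sigma)\). The map \(s\mapsto\langle\mathcal N_{b,\mu}(\psi(s)),\eta\rangle\) must be measurable, and the equation must hold in \(C^\infty_c\)-duality. We plan to get this from the weak formulation on each compact \([0,\tau_2]\), where the local \(H^1\) theory gives \(\mathcal N_{b,\mu}(\psi)\in L^1_{\mathrm{loc}}(H^{-1})\). For \(\eta\in C_c^\infty\), the pairing is therefore the \(H^{-1}\)–\(H^1\) pairing, and it is controlled by Lemma~\ref{lem:WNP1}.

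Next, we extend the limit to all of \(L^2\) by density. Set \(M=\sup_\tau\|\psi(\tau)\|_{L^2}\). Given \(g\in L^2\) and \(\varepsilon>0\), choose \(\eta\in C_c^\infty\) with \(\|g-\eta\|_{L^2}<\varepsilon\). Then the oscillation of \(\langle\psi(\tau),g\rangle\) as \(\tau\to1^-\) is at most \(2M\varepsilon\), so the limit \(\ell(g)\) exists for every \(g\). The functional \(\ell\) is antilinear and satisfies \(|\ell(g)|\le M\|g\|_{L^2}\). By the Riesz representation theorem, there is \(\psi_+\in L^2\) with \(\ell(g)=\langle\psi_+,g\rangle\), which is exactly \eqref{eq:WLV}.

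Uniqueness of \(\psi_+\) follows because weak limits in a Hilbert space are unique.

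Alternatively, one could argue with weak compactness: every sequence \(\tau_n\to1\) has a weakly convergent subsequence. All subsequential limits coincide, since they agree when tested against \(C_c^\infty\) functions.
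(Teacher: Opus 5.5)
Your proposal is correct and follows the paper's route. You show the pairing \(\langle\psi(\tau),\eta\rangle\) is Cauchy for \(\eta\in C_c^\infty\) via the integrated weak formulation, using the uniform bound of Lemma~\ref{lem:NFT} against the integrable weight \((1-s)^{-\beta}\), and then conclude with the uniform \(L^2\) bound, density and Riesz representation. The only deviation is in the linear term: you move the full Laplacian onto \(\eta\) and use mass conservation, while the paper keeps one derivative on \(\psi\) and uses the growth bound \eqref{eq:GAP}; both give an integrable majorant, yours slightly more cheaply.
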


\begin{proof}
Let \(0\le \tau<s<1\). For every \(\eta\in C_c^\infty(\R^d)\), the weak formulation of
\eqref{eq:pcpsieq}, integrated between \(\tau\) and \(s\), gives
\begin{equation}\label{eq:WFI1}
 \ip{\psi(s)-\psi(\tau)}{\eta}
 =
 -\ii\int_\tau^s
 \ip{\nabla\psi(r)}{\nabla\eta}\,dr
 -
 \ii\int_\tau^s
 (1-r)^{-\beta}
 \ip{\Ncal_{b,\mu}(\psi(r))}{\eta}\,dr.
\end{equation}
For the linear part, \eqref{eq:GAP} gives
\begin{align}
 \abs{\int_\tau^s \ip{\nabla \psi(r)}{\nabla\eta}\,dr}
 &\le
 \|\nabla\eta\|_{L^2}\int_\tau^s \|\nabla \psi(r)\|_{L^2}\,dr
\lesssim
 \|\nabla\eta\|_{L^2}\int_\tau^s (1-r)^{-\beta/2}\,dr.
 \label{eq:LEB}
\end{align}
For the nonlinear part, Lemma \ref{lem:NFT} yields
\[
 \abs{\ip{\Ncal_{b,\mu}(\psi(r))}{\eta}}\lesssim 1
 \qquad (0\le r<1).
\]
Hence
\begin{align}
 \abs{\int_\tau^s (1-r)^{-\beta}\ip{\Ncal_{b,\mu}(\psi(r))}{\eta}\,dr}
 &\lesssim_\eta
 \int_\tau^s (1-r)^{-\beta}\,dr.
 \label{eq:NEB}
\end{align}
Combining \eqref{eq:WFI1}, \eqref{eq:LEB}, and \eqref{eq:NEB}, we obtain,
for every \(0\le\tau<s<1\) and
\(\eta\in C_c^\infty(\R^d)\),
\[
 \left|
 \ip{\psi(s)-\psi(\tau)}{\eta}
 \right|
 \lesssim_\eta
 \int_\tau^s
 \left(
  (1-r)^{-\beta/2}
  +(1-r)^{-\beta}
 \right)\,dr.
\]
Since \(0<\beta<1\), it follows that
\[
 \lim_{\substack{\tau,s\to1^-\\ \tau<s}}
 \left|
 \ip{\psi(s)-\psi(\tau)}{\eta}
 \right|
 =0.
\]
Hence the limit
\[
 \Lambda(\eta)
 :=
 \lim_{\tau\to1^-}
 \ip{\psi(\tau)}{\eta}
\]
exists for every \(\eta\in C_c^\infty(\R^d)\). By \eqref{eq:psia}, one obtains
\[
 |\Lambda(\eta)|
 \le
 C_\psi\|\eta\|_{L^2},
 \qquad
 C_\psi
 :=
 \sup_{0\le\tau<1}\|\psi(\tau)\|_{L^2}
 <\infty.
\]
Hence \(\Lambda\) extends uniquely to a bounded conjugate-linear
functional on \(L^2(\R^d)\). By the Riesz representation theorem,
there exists a unique \(\psi_+\in L^2(\R^d)\) such that
\(
 \Lambda(g)=\ip{\psi_+}{g},
\)
with
\(
 g\in L^2(\R^d).
\)
Finally, the density of \(C_c^\infty(\R^d)\) in \(L^2(\R^d)\),
together with the uniform bound in \eqref{eq:psia}, extends the
convergence to every \(g\in L^2(\R^d)\). Therefore
\(
 \psi(\tau)\rightharpoonup \psi_+
 \)
as \(\tau\to1^-\), which is \eqref{eq:WLV}.
\end{proof}

We now move from weak to strong convergence.

\begin{lemma}\label{lem:NVB1}
Under the hypotheses of Proposition~\ref{prop:BPCP1}, let \(\psi\)
be the corresponding pseudoconformal transform, and set
\begin{equation}\label{eq:NVB3}
 M_\psi
 :=
 \sup_{0\le s<1}
 \mathcal I_{b,\mu}(\psi(s)).
\end{equation}
Then \(M_\psi<\infty\). Moreover, for every
\(0\le r,\tau<1\),
\(
 \mathcal N_{b,\mu}(\psi(r))
 \overline{\psi(\tau)}
 \in L^1(\R^d),
\)
and
\begin{equation}\label{eq:NVB2}
 \left|
 \left\langle
 \mathcal N_{b,\mu}(\psi(r)),\psi(\tau)
 \right\rangle
 \right|
 \le
 \mathcal I_{b,\mu}(\psi(r))^{1-\frac1{2q}}
 \mathcal I_{b,\mu}(\psi(\tau))^{\frac1{2q}}
 \le
 M_\psi.
\end{equation}
\end{lemma}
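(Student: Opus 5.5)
The proof is a direct application of Lemma~\ref{lem:WNP1}, together with the uniform bound of Corollary~\ref{cor:psia}.

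\emph{Finiteness of $M_\psi$.} The second estimate in \eqref{eq:psia} states exactly that $\sup_{0\le s<1}\mathcal I_{b,\mu}(\psi(s))<\infty$. Hence $M_\psi<\infty$. This bound comes from \eqref{eq:pcEbound}, via the monotonicity of $Q(\tau)$ and the finiteness of $\mathcal I_{b,\mu}(\phi_0)\le 2qE(\phi_0)$.

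\emph{Pairing bound.} Fix $0\le r,\tau<1$ and apply Lemma~\ref{lem:WNP1} with $u=\psi(r)$ and $v=\psi(\tau)$. Both functions are measurable, and $\mathcal I_{b,\mu}(u)$, $\mathcal I_{b,\mu}(v)\le M_\psi<\infty$, so hypothesis \eqref{eq:bou1} holds.

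The proof of Lemma~\ref{lem:WNP1} gives more than the stated inequality. It shows, through \eqref{eq:ANP} and \eqref{eq:MIF}, that
\[
\int_{\R^d}\bigl|\mathcal N_{b,\mu}(u)\,\overline v\bigr|\,dx=\int_{\R^d}A_u|u|^{q-1}|v|\,dx<\infty .
\]
So $\mathcal N_{b,\mu}(\psi(r))\overline{\psi(\tau)}\in L^1(\R^d)$, and the pairing is well defined. Inequality \eqref{eq:WNP2} then yields the first inequality in \eqref{eq:NVB2}.

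\emph{Final bound by $M_\psi$.} The exponents $1-\frac1{2q}$ and $\frac1{2q}$ are nonnegative and sum to $1$. Bounding each factor by $M_\psi$ therefore gives
\[
\mathcal I_{b,\mu}(\psi(r))^{1-\frac1{2q}}\,\mathcal I_{b,\mu}(\psi(\tau))^{\frac1{2q}}\le M_\psi^{1-\frac1{2q}}M_\psi^{\frac1{2q}}=M_\psi .
\]
If $M_\psi=0$, both sides vanish and there is nothing to prove.

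\emph{Main obstacle.} There is no real obstacle. The only point needing care is that the integrability statement is not part of the displayed conclusion of Lemma~\ref{lem:WNP1}, but is established inside its proof. It should be quoted from there explicitly rather than inferred from the inequality alone.
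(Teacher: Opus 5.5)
Your proposal is correct and follows the paper's proof essentially verbatim. You get $M_\psi<\infty$ from \eqref{eq:psia}, then apply Lemma~\ref{lem:WNP1} with $u=\psi(r)$, $v=\psi(\tau)$ for both the $L^1$ integrability and the first inequality, and use that the exponents sum to one for the final bound by $M_\psi$. Your remark that the integrability is established inside the proof of Lemma~\ref{lem:WNP1}, rather than in its displayed conclusion, is a fair precision that the paper leaves implicit.
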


\begin{proof}
By \eqref{eq:psia}, one has \(M_\psi<\infty\).  In particular,
for every \(0\le r,\tau<1\),
\[
 \mathcal I_{b,\mu}(\psi(r))<\infty,
 \qquad
 \mathcal I_{b,\mu}(\psi(\tau))<\infty.
\]
Lemma~\ref{lem:WNP1}, applied with
\(
 u=\psi(r),
 \)
  \(
  v=\psi(\tau),
\)
shows that
\(
 \mathcal N_{b,\mu}(\psi(r))
 \overline{\psi(\tau)}
 \in L^1(\R^d)
\)
and gives the first inequality in \eqref{eq:NVB2}.  The second
inequality follows from \eqref{eq:NVB3}, since
\[
 \left(1-\frac1{2q}\right)+\frac1{2q}=1.
\]
\end{proof}
The above estimate provides the bound on the nonlinear term needed to pass
from the weak convergence in \eqref{eq:WLV} to strong
\(L^2\)-convergence.

\begin{theorem}\label{thm:L2S}
Assume \eqref{eq:ass} and \eqref{eq:frf}. Let
\(\phi_0\in\Sigma\), and let
\(
 \phi\in C\bigl(\mathbb R;H^1(\mathbb R^d)\bigr)
\)
be the corresponding global solution of \eqref{eq:ME} with
\(\phi_0\in \Sigma\). Let \(\psi\) be the pseudoconformal transform of
\(\phi\) defined by \eqref{eq:inhCon}. Then there exists
\(\psi_+\in L^2(\mathbb R^d)\) such that
\begin{equation}\label{eq:L2S1}
 \lim_{\tau\to1^-}
 \|\psi(\tau)-\psi_+\|_{L^2}
 =0.
\end{equation}
Consequently, there exists \(\phi_+\in L^2(\mathbb R^d)\) such that
\begin{equation}\label{eq:L2S1A}
 \lim_{t\to+\infty}
 \left\|
  \phi(t)-e^{it\Delta}\phi_+
 \right\|_{L^2}
 =0.
\end{equation}
\end{theorem}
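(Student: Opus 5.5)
We upgrade the weak convergence of Proposition~\ref{prop:WLFR} to strong convergence by showing that \(\|\psi(\tau)\|_{L^2}\to\|\psi_+\|_{L^2}\). Mass is conserved, so \(\|\psi(\tau)\|_{L^2}^2=M(\phi_0)\) for every \(\tau\). It is therefore enough to show that \(\operatorname{Re}\ip{\psi(s)}{\psi(\tau)}\) has a limit as \(\tau,s\to1^-\). Concretely, we aim at the Cauchy property
\[
\|\psi(s)-\psi(\tau)\|_{L^2}^2
=2\|\phi_0\|_{L^2}^2-2\operatorname{Re}\ip{\psi(s)}{\psi(\tau)}\longrightarrow0 .
\]
To get it, we use the Duhamel/weak formulation \eqref{eq:WFI1}, now tested against \(\psi(\tau)\) itself instead of against \(\eta\in C_c^\infty\).

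\textbf{Step 1.} For fixed \(\tau<s\), differentiate \(r\mapsto\ip{\psi(r)}{\psi(\tau)}\) on \([\tau,s]\). This is legitimate since \(\psi\in C([0,1);\Sigma)\subset C([0,1);H^1)\), and one may first regularize \(\psi(\tau)\) if needed. The equation gives
\[
\ip{\psi(s)-\psi(\tau)}{\psi(\tau)}
=-\ii\int_\tau^s\ip{\nabla\psi(r)}{\nabla\psi(\tau)}\,dr
-\ii\int_\tau^s(1-r)^{-\beta}\ip{\Ncal_{b,\mu}(\psi(r))}{\psi(\tau)}\,dr .
\]
The nonlinear integral is controlled by Lemma~\ref{lem:NVB1}. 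Since \(|\ip{\Ncal_{b,\mu}(\psi(r))}{\psi(\tau)}|\le M_\psi\) and \(0<\beta<1\), it is bounded by \(M_\psi\int_\tau^1(1-r)^{-\beta}dr\to0\).

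\textbf{Step 2, the main obstacle.} The linear term is only bounded, via \eqref{eq:GAP}, by
\[
(1-\tau)^{-\beta/2}\int_\tau^s(1-r)^{-\beta/2}\,dr\lesssim(1-\tau)^{1-\beta},
\]
which fortunately also tends to \(0\) because \(\beta<1\). If that crude estimate were not available, the fallback would be to split \(\psi(\tau)=\chi\ast\psi(\tau)+(\psi(\tau)-\chi\ast\psi(\tau))\) with a mollifier and use weak convergence. Here, however, the estimate suffices:
\[
\Bigl|\ip{\psi(s)-\psi(\tau)}{\psi(\tau)}\Bigr|
\lesssim(1-\tau)^{1-\beta}+\int_\tau^1(1-r)^{-\beta}\,dr\to0 .
\]
Taking real parts gives \(\|\psi(s)\|^2-\|\psi(\tau)\|^2\)-type cancellation:
\[
\|\psi(s)-\psi(\tau)\|_{L^2}^2
=\|\psi(s)\|^2-\|\psi(\tau)\|^2-2\operatorname{Re}\ip{\psi(s)-\psi(\tau)}{\psi(\tau)} .
\]
The first difference vanishes by mass conservation, and the last term tends to \(0\). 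Hence \(\psi(\tau)\) is Cauchy in \(L^2\), and its strong limit coincides with the weak limit \(\psi_+\). This proves \eqref{eq:L2S1}.

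\textbf{Step 3.} To go back to \(\phi\), invert \eqref{eq:inhCon}. With \(\lambda=(1+t)^{-1}\) we have
\[
\phi(t,x)=\lambda^{d/2}e^{\ii\lambda|x|^2/4}\psi(\tau,\lambda x).
\]
Replacing \(\psi(\tau)\) by \(\psi_+\) costs \(\|\psi(\tau)-\psi_+\|_{L^2}\) in \(L^2\), by unitarity of the dilation. Next, \(e^{\ii\lambda|x|^2/4}\) may be replaced by \(e^{\ii|x|^2/(4t)}\): their difference is \(o(1)\) in \(L^2\) by dominated convergence after rescaling, because \(\lambda-1/t=O(t^{-2})\) and the relevant weight is \(|y|^2/\lambda\cdot O(t^{-2})\to0\) pointwise. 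Finally, the Dollard/MDFM factorization \(e^{\ii t\Delta}=M_tD_t\mathcal F M_t\) gives
\[
\|e^{\ii t\Delta}\phi_+-(2\ii t)^{-d/2}e^{\ii|x|^2/(4t)}\widehat{\phi_+}(x/2t)\|_{L^2}\to0 .
\]
So we choose \(\phi_+\) by \(\widehat{\phi_+}\) equal to a suitable dilate and phase of \(\psi_+\). The rescaling \(x/(1+t)\) versus \(x/(2t)\) is then absorbed by a fixed dilation in the definition of \(\phi_+\), together with the strong continuity of dilations on \(L^2\). This yields \eqref{eq:L2S1A}.
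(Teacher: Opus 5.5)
Your Steps 1--2 are correct and are essentially the paper's argument. You test the equation against \(\psi(\tau)\) and bound both the linear and the nonlinear term by \((1-\tau)^{1-\beta}\). Your use of mass conservation, \(\|\psi(s)-\psi(\tau)\|_{L^2}^2=-2\operatorname{Re}\ip{\psi(s)-\psi(\tau)}{\psi(\tau)}\), makes the weak limit of Proposition~\ref{prop:WLFR} unnecessary. The paper instead first lets \(s\to1^-\) weakly and then expands \(\|\psi(\tau)-\psi_+\|_{L^2}^2\). Both routes are fine.

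The gap is in Step 3: the two chirps are not interchangeable. After rescaling \(y=\lambda x\) you have \(|x|^2=|y|^2/\lambda^2\), not \(|y|^2/\lambda\). So the phase mismatch is \((\tfrac1t-\lambda)\tfrac{|x|^2}{4}=\tfrac{(1+t)|y|^2}{4t}\), which does not tend to zero:
\[
\bigl\|\lambda^{d/2}\bigl(e^{\ii\lambda|x|^2/4}-e^{\ii|x|^2/(4t)}\bigr)\psi_+(\lambda x)\bigr\|_{L^2_x}^2
=\int_{\R^d}\bigl|1-e^{\ii(1+t)|y|^2/(4t)}\bigr|^2|\psi_+(y)|^2\,dy
\longrightarrow\int_{\R^d}4\sin^2\bigl(|y|^2/8\bigr)|\psi_+(y)|^2\,dy .
\]
The limit is strictly positive, because \(\sin^2(|y|^2/8)>0\) almost everywhere and \(\|\psi_+\|_{L^2}=\|\phi_0\|_{L^2}\neq0\) for nontrivial data. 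Followed literally, your steps therefore produce \(\widehat{\phi_+}(\xi)=(2\ii)^{d/2}\psi_+(2\xi)\), a pure dilate of \(\psi_+\), and this is not a scattering state.

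The repair is to keep the residual chirp rather than discard it. By dominated convergence, \(e^{\ii(1+t)|y|^2/(4t)}\psi_+\to e^{\ii|y|^2/4}\psi_+\) in \(L^2\). Absorbing this factor into the definition gives \(\widehat{\phi_+}(\xi)=(2\ii)^{d/2}e^{-\ii|\xi|^2}\psi_+(2\xi)\), which is \(\phi_+=e^{\ii|\cdot|^2/4}e^{-\ii\Delta}\psi_+\), exactly the paper's scattering state.

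The paper reaches this more cleanly, without Dollard asymptotics. Applying the pseudoconformal transform to free solutions gives the exact identity \(e^{-\ii t\Delta}\phi(t)=e^{\ii|\cdot|^2/4}e^{-\ii\tau\Delta}\psi(\tau)\). The conclusion then reduces to \(\psi(\tau)\to\psi_+\) together with strong continuity of \(e^{-\ii\tau\Delta}\) at \(\tau=1\).
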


\begin{proof}
By Proposition \ref{prop:WLFR}, there exists \(\psi_+\in L^2(\R^d)\) such that
\(
 \psi(\tau)\rightharpoonup \psi_+
 \)
as \(\tau\to1^-\). It remains to prove strong convergence. Let \(0\le \tau<s<1\). Using \eqref{eq:pcpsieq}, we establish
\begin{equation}\label{eq:PWV}
 \ip{\psi(s)-\psi(\tau)}{\psi(\tau)}
 =
 -i\int_\tau^s \ip{\nabla \psi(r)}{\nabla \psi(\tau)}\,dr
 -i\int_\tau^s (1-r)^{-\beta}\ip{\Ncal_{b,\mu}(\psi(r))}{\psi(\tau)}\,dr.
\end{equation}
Here the integral involving the nonlinear term is well defined by
Lemma~\ref{lem:NVB1}. Using \eqref{eq:GAP}, \eqref{eq:NVB2}  and \eqref{eq:PWV}, we find, for \(0\le\tau<s<1\),
\begin{align}
  \left|
 \ip{\psi(s)-\psi(\tau)}{\psi(\tau)}
 \right|
 &\lesssim
 (1-\tau)^{-\beta/2}
 \int_\tau^s(1-r)^{-\beta/2}\,dr
 +
 \int_\tau^s(1-r)^{-\beta}\,dr
 \lesssim
 (1-\tau)^{1-\beta}.
 \label{eq:NVS}
\end{align}
Fixing \(0\le\tau<1\) and letting \(s\to1^-\), the weak convergence
\(\psi(s)\rightharpoonup\psi_+\) in \(L^2(\R^d)\) gives
\[
 \left|
 \ip{\psi_+-\psi(\tau)}{\psi(\tau)}
 \right|
 \lesssim
 (1-\tau)^{1-\beta}.
\]
Since \(0<\beta<1\), it follows that
\begin{equation}\label{eq:SPC1}
 \lim_{\tau\to1^-}
 \ip{\psi(\tau)-\psi_+}{\psi(\tau)}
 =0.
\end{equation}
On the other hand, the weak convergence
\(\psi(\tau)\rightharpoonup\psi_+\) also yields
\begin{equation}\label{eq:SPC2}
 \lim_{\tau\to1^-}
 \ip{\psi(\tau)-\psi_+}{\psi_+}
 =0.
\end{equation}
Therefore,
\begin{align*}
 \lim_{\tau\to1^-}
 \|\psi(\tau)-\psi_+\|_{L^2}^2
 &=
 \lim_{\tau\to1^-}
 \left[
 \Re\ip{\psi(\tau)-\psi_+}{\psi(\tau)}
 -
 \Re\ip{\psi(\tau)-\psi_+}{\psi_+}
 \right]
 =0,
\end{align*}
which proves \eqref{eq:L2S1}. It remains to relate the compactified limit to a scattering state in
the original variables.  Let \( \lambda\) and \(\tau\) as in \eqref{eq:lambdat}. The same computation used  to prove \eqref{eq:PLT}, applied to
the free solution \(e^{\ii t\Delta}f\), shows that, for every
\(f\in L^2(\R^d)\),
\begin{equation}\label{eq:PCF1}
 \lambda^{-d/2}
 e^{-\frac{\ii|y|^2}{4\lambda}}
 \bigl(e^{\ii t\Delta}f\bigr)
 \left(\frac{y}{\lambda}\right)
 =
 e^{\ii\tau\Delta}
 \left(
  e^{-\frac{\ii|\cdot|^2}{4}}f
 \right)(y).
\end{equation}
Indeed, the transformed function solves the free
Schr\"odinger equation in the variable \(\tau\) and, at \(\tau=0\),
has initial value \(e^{-\ii|\cdot|^2/4}f\).  The identity follows first
for smooth data and then for all \(L^2\)-data by density and unitarity. For each fixed \(t\ge0\), take
\(
 f=e^{-\ii t\Delta}\phi(t)
\)
in \eqref{eq:PCF1}.  Since \(e^{\ii t\Delta}f=\phi(t)\), the definition
\eqref{eq:inhCon} gives
\begin{equation}\label{eq:PCI2}
 e^{-\ii t\Delta}\phi(t)
 =
 e^{\frac{\ii|\cdot|^2}{4}}
 e^{-\ii\tau\Delta}\psi(\tau)
\end{equation}
in \(L^2(\R^d)\). Define
\begin{equation}\label{eq:PCS3}
 \phi_+
 :=
 e^{\frac{\ii|\cdot|^2}{4}}
 e^{-\ii\Delta}\psi_+.
\end{equation}
Multiplication by \(e^{\ii|x|^2/4}\) and the free Schr\"odinger group
are unitary on \(L^2\), so \(\phi_+\in L^2(\R^d)\).  Moreover, by
\eqref{eq:PCI2} and the strong continuity of the
free Schr\"odinger group on \(L^2\) one has
\begin{align*}
 \limsup_{t\to+\infty}
 \norm{e^{-\ii t\Delta}\phi(t)-\phi_+}_{L^2}
 =
 \limsup_{\tau\to1^-}
 \norm{
  e^{-\ii\tau\Delta}\psi(\tau)
  -
  e^{-\ii\Delta}\psi_+
 }_{L^2}
\\
 \le
 \lim_{\tau\to1^-}
 \norm{\psi(\tau)-\psi_+}_{L^2}
 +
 \lim_{\tau\to1^-}
 \norm{
  \bigl(e^{-\ii\tau\Delta}-e^{-\ii\Delta}\bigr)\psi_+
 }_{L^2}
 =0.
\end{align*}
Therefore, one obtains
\[
 \lim_{t\to+\infty}
 \norm{\phi(t)-e^{\ii t\Delta}\phi_+}_{L^2}
 =
 \lim_{t\to+\infty}
 \norm{e^{-\ii t\Delta}\phi(t)-\phi_+}_{L^2}
 =0.
\]
Thus \eqref{eq:L2S1A} holds and \(\phi\) scatters in \(L^2(\R^d)\).

\end{proof}

\section{Scattering in \(H^{1}\): proof of the Theorem \ref{thm:MH1}}
\label{sec:BPUC}

In this section we prove Theorem~\ref{thm:MH1} by providing a self contained adaptation of the
scattering argument of
\cite{BurqGeorgievTzvetkovVisciglia}. We treat only the case \(t\>0\) since
the case \(t<0\) follows by applying the same argument to
\(\overline{\phi(-t,x)}\).

\begin{lemma}
\label{lem:BPGV1}
Under the hypotheses of
Proposition~\ref{prop:BPCP1}, one has, for every \(t\ge0\), the following
\begin{equation}\label{eq:BPGVR} 
 \left\|
 \nabla\phi(t)
 -
 \frac{i x}{2(1+t)}\phi(t)
 \right\|_{L^2}
 \lesssim
 (1+t)^{-1+\frac{\beta}{2}}.
\end{equation}
In particular, we get 
\begin{equation}\label{eq:BPGV2}
 \lim_{t\to+\infty}
 \left\|
 \nabla\phi(t)
 -
 \frac{i x}{2(1+t)}\phi(t)
 \right\|_{L^2}
 =0,
\end{equation}
and
\begin{equation}\label{eq:BPRMP}
 \sup_{t\ge0}
 \left\|
 \frac{x}{1+t}\phi(t)
 \right\|_{L^2}
 <\infty.
\end{equation}
\end{lemma}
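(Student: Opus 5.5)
The plan is to read \eqref{eq:BPGVR} directly off the gradient identity for the pseudoconformal transform established in the proof of Proposition~\ref{prop:BPCP1}, combined with the bound \eqref{eq:GAP}. With \(\lambda=1-\tau=(1+t)^{-1}\), that identity reads
\[
 \|\nabla_y\psi(\tau)\|_{L^2}
 =
 \lambda^{-1}
 \left\|
  \nabla_x\phi(t)-\frac{i\lambda x}{2}\phi(t)
 \right\|_{L^2}.
\]
Before using it, I will re-derive it from \eqref{eq:inhCon} to pin down the sign convention. Write \(\phi(t,x)=\lambda^{d/2}e^{i\lambda|x|^2/4}\psi(\tau,\lambda x)\). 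Then
\[
 \nabla_x\phi=\lambda^{d/2}e^{i\lambda|x|^2/4}\Bigl(\tfrac{i\lambda x}{2}\psi(\tau,\lambda x)+\lambda(\nabla_y\psi)(\tau,\lambda x)\Bigr),
\]
so that \(\nabla_x\phi-\frac{i\lambda x}{2}\phi=\lambda^{d/2+1}e^{i\lambda|x|^2/4}(\nabla_y\psi)(\tau,\lambda x)\). Taking \(L^2\) norms and changing variables \(y=\lambda x\) gives exactly \(\lambda\|\nabla_y\psi(\tau)\|_{L^2}\), which confirms the identity with \(\frac{i x}{2(1+t)}\) as written in \eqref{eq:BPGVR}.

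Next, \eqref{eq:GAP} (i.e.\ Corollary~\ref{cor:psia}) gives \(\|\nabla\psi(\tau)\|_{L^2}\lesssim\lambda^{-\beta/2}\). Hence
\[
 \left\|\nabla\phi(t)-\frac{ix}{2(1+t)}\phi(t)\right\|_{L^2}
 \lesssim\lambda^{1-\beta/2}=(1+t)^{-1+\frac\beta2},
\]
which is \eqref{eq:BPGVR}. Since \eqref{eq:frf} gives \(0<\beta<1\), the exponent \(-1+\beta/2\) is negative, and \eqref{eq:BPGV2} follows immediately.

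For \eqref{eq:BPRMP}, I use the triangle inequality together with the uniform bound \eqref{eq:CE3}:
\[
 \left\|\frac{x}{1+t}\phi(t)\right\|_{L^2}
 \le 2\|\nabla\phi(t)\|_{L^2}+2\left\|\nabla\phi(t)-\frac{ix}{2(1+t)}\phi(t)\right\|_{L^2}
 \lesssim E(\phi_0)^{1/2}+1 .
\]
The right-hand side is independent of \(t\ge0\), so the supremum is finite. Alternatively, \eqref{eq:IMP} with \eqref{eq:CE3} gives \(\|x\phi(t)\|_{L^2}\lesssim 1+t\) directly.

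The only real obstacle is justification. The manipulations require \(\phi(t)\in\Sigma\) so that \(x\phi(t)\in L^2\) and all norms are finite; this is guaranteed by \(\phi\in C(\R;\Sigma)\) from Proposition~\ref{prop:FMP}. The identity itself holds pointwise in \(t\) for \(\Sigma\)-valued functions, so no approximation argument is needed beyond what was already done in Proposition~\ref{prop:BPCP1}.
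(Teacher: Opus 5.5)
Your proposal is correct and follows essentially the same route as the paper. You combine the identity $\bigl\|\nabla\phi(t)-\frac{ix}{2(1+t)}\phi(t)\bigr\|_{L^2}=\lambda\|\nabla\psi(\tau)\|_{L^2}$ with the conformal-energy bound $\|\nabla\psi(\tau)\|_{L^2}\lesssim\lambda^{-\beta/2}$, then use the triangle inequality and \eqref{eq:CE3} to get \eqref{eq:BPRMP}; your alternative of getting \eqref{eq:BPRMP} from \eqref{eq:IMP} and \eqref{eq:CE3} alone is also valid.
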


\begin{proof}
For \(t\ge0\), choose \( \tau\), \( \lambda\) as in \eqref{eq:lambdat} and 
\(
 y:=\lambda x.
\)
This enhances to
\[
 t=\frac{\tau}{1-\tau},
 \qquad
 x=\frac{y}{\lambda}.
\]
By \eqref{eq:inhCon}, we achieve
\begin{equation}
 \psi(\tau,y)
 =
 \lambda^{-d/2}
 e^{-\frac{i|y|^2}{4\lambda}}
 \phi\left(t,\frac{y}{\lambda}\right).
\end{equation}
For each
\(j=1,\dots,d\), we can write
\begin{align*}
 \partial_{y_j}\psi(\tau,y)
 &=
 \lambda^{-d/2}
 e^{-\frac{i|y|^2}{4\lambda}}
 \left[
 -\frac{i y_j}{2\lambda}\phi(t,x)
 +
 \frac1\lambda\partial_{x_j}\phi(t,x)
 \right]
 =
 \lambda^{-d/2-1}
 e^{-\frac{i|y|^2}{4\lambda}}
 \left[
 \partial_{x_j}\phi(t,x)
 -
 \frac{i y_j}{2}\phi(t,x)
 \right].
\end{align*}
Thus, we obtain the identity
\begin{equation}\label{eq:BPGTE}
 \nabla_y\psi(\tau,y)
 =
 \lambda^{-d/2-1}
 e^{-\frac{i|y|^2}{4\lambda}}
 \left(
 \nabla_x\phi(t,x)
 -
 \frac{i\lambda x}{2}\phi(t,x)
 \right).
\end{equation}
The above identity is valid in the weak \(L^2\)-sense under the regularity
assumptions of
Proposition~\ref{prop:BPCP1}. Indeed, for each fixed
\(t\ge0\), we have
\(
 \phi(t)\in\Sigma,
\)
and hence
\(
 \nabla\phi(t)\in L^2,
 x\phi(t)\in L^2.
\)
The right-hand side of
\eqref{eq:BPGTE} therefore belongs to
\(L^2\). The identity may equivalently be obtained first for smooth
functions and then extended to \(\Sigma\) by density.

The identity \eqref{eq:BPGTE} gives
\begin{align*}
 \|\nabla\psi(\tau)\|_{L^2_y}^2
 =
 \int_{\mathbb R^d}
 \lambda^{-d-2}
 \left|
 \nabla\phi(t,x)
 -
 \frac{i\lambda x}{2}\phi(t,x)
 \right|^2dy
 \\
 =
 \lambda^{-d-2}\lambda^d
 \int_{\mathbb R^d}
 \left|
 \nabla\phi(t,x)
 -
 \frac{i\lambda x}{2}\phi(t,x)
 \right|^2dx
 =
 \lambda^{-2}
 \left\|
 \nabla\phi(t)
 -
 \frac{i\lambda x}{2}\phi(t)
 \right\|_{L^2_x}^2,
\end{align*}
which reads as
\begin{equation}\label{eq:BPGI}
 \left\|
 \nabla\phi(t)
 -
 \frac{i x}{2(1+t)}\phi(t)
 \right\|_{L^2}
 =
 \lambda\|\nabla\psi(\tau)\|_{L^2}.
\end{equation}
The conformal-energy estimate in
\eqref{eq:pcEbound}, implies
\begin{equation}\label{eq:BPTGR}
 \|\nabla\psi(\tau)\|_{L^2}
 \lesssim
 \lambda^{-\beta/2}.
\end{equation}
Substituting
\eqref{eq:BPTGR} into
\eqref{eq:BPGI}, we find
\begin{align*}
 \left\|
 \nabla\phi(t)
 -
 \frac{i x}{2(1+t)}\phi(t)
 \right\|_{L^2}
 &\lesssim
 \lambda^{1-\beta/2}
 =
 (1+t)^{-1+\beta/2}.
\end{align*}
This proves
\eqref{eq:BPGVR}. The hypotheses of
Proposition~\ref{prop:BPCP1} give
\(
 0<\beta<1.
\)
In particular,
\[
 1-\frac{\beta}{2}>0.
\]
It follows that
\begin{equation}\label{eq:BPSDL}
 \lim_{t\to+\infty}
 (1+t)^{-1+\beta/2}
 =0.
\end{equation}
A combination of
\eqref{eq:BPGVR} and
\eqref{eq:BPSDL} bring to
\[
 0
 \le
 \limsup_{t\to+\infty}
 \left\|
 \nabla\phi(t)
 -
 \frac{i x}{2(1+t)}\phi(t)
 \right\|_{L^2}
 \le0.
\]
Therefore,
\[
 \lim_{t\to+\infty}
 \left\|
 \nabla\phi(t)
 -
 \frac{i x}{2(1+t)}\phi(t)
 \right\|_{L^2}
 =0,
\]
which proves
\eqref{eq:BPGV2}. We finally prove
\eqref{eq:BPRMP}. Define
\[
 A(t)
 :=
 \nabla\phi(t)
 -
 \frac{i x}{2(1+t)}\phi(t).
\]
Consequently, one has the inequality
\begin{align}
 \left\|
 \frac{x}{1+t}\phi(t)
 \right\|_{L^2}
 =
 2
 \left\|
 \frac{i x}{2(1+t)}\phi(t)
 \right\|_{L^2}
 \notag\\
 =
 2\|\nabla\phi(t)-A(t)\|_{L^2}
 \le
 2\|\nabla\phi(t)\|_{L^2}
 +
 2\|A(t)\|_{L^2}.
 \label{eq:BPMFG}
\end{align}
By  \eqref{eq:CE3}, there exists a
constant \(C_E<\infty\), such that
\begin{equation}\label{eq:BPPGU}
 \sup_{t\ge0}\|\nabla\phi(t)\|_{L^2}
 \le C_E.
\end{equation}
On the other hand, by
\eqref{eq:BPGVR}, one gets
\[
 \|A(t)\|_{L^2}
 \lesssim
 (1+t)^{-1+\beta/2}.
\]
Since \(1-\beta/2>0\), it follows that
\begin{equation}\label{eq:BPAU}
 \sup_{t\ge0}\|A(t)\|_{L^2}<\infty.
\end{equation}
The, an use of 
\eqref{eq:BPMFG},
\eqref{eq:BPPGU} and
\eqref{eq:BPAU}, gives
\[
 \sup_{t\ge0}
 \left\|
 \frac{x}{1+t}\phi(t)
 \right\|_{L^2}
 <\infty.
\]
This proves
\eqref{eq:BPRMP}.
\end{proof}

The following result is the Choquard analogue of
\cite[Lemma~4.2]{BurqGeorgievTzvetkovVisciglia}.
We include the proof because our pseudoconformal variables
lead first to the scale \(1+t\).

\begin{lemma}
\label{lem:BPT}
Under the hypotheses of
Proposition~\ref{prop:BPCP1}, one has
\begin{equation}\label{eq:BPTO}
 \lim_{R\to\infty}\,
 \limsup_{t\to+\infty}
 \int_{|x|>R(1+t)}
 \frac{|x|^2}{(1+t)^2}
 |\phi(t,x)|^2\,dx
 =0.
\end{equation}
\end{lemma}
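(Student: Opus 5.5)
The plan is to pass to the pseudoconformal variables, where the quantity in \eqref{eq:BPTO} becomes a localized second moment of \(\psi\). I will then control its time derivative by a localized virial identity, using the uniform bound \eqref{eq:BPRMP} together with the gradient growth \eqref{eq:GAP}.

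\emph{Step 1: reduction.} With \(\tau,\lambda\) as in \eqref{eq:lambdat} and \(y=\lambda x\), we have \(|\phi(t,x)|=\lambda^{d/2}|\psi(\tau,\lambda x)|\) by \eqref{eq:inhCon}. The change of variables then gives
\[
 \int_{|x|>R(1+t)}\frac{|x|^2}{(1+t)^2}|\phi(t,x)|^2\,dx
 =\int_{|y|>R}|y|^2|\psi(\tau,y)|^2\,dy .
\]
It therefore suffices to show \(\lim_{R\to\infty}\limsup_{\tau\to1^-}\int_{|y|>R}|y|^2|\psi(\tau,y)|^2\,dy=0\). Two facts will be used. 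First, by \eqref{eq:pcSI} and \eqref{eq:BPRMP}, \(\sup_{0\le\tau<1}\|y\psi(\tau)\|_{L^2}=\sup_{t\ge0}\|x\phi(t)\|_{L^2}/(1+t)<\infty\). Second, \eqref{eq:GAP} gives \(\|\nabla\psi(\tau)\|_{L^2}\lesssim(1-\tau)^{-\beta/2}\).

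\emph{Step 2: localized virial bound.} Fix \(\theta\in C^\infty(\R^d)\) with \(0\le\theta\le1\), \(\theta=0\) on \(\{|y|\le1/2\}\) and \(\theta=1\) on \(\{|y|\ge1\}\). Set \(\chi_R(y):=|y|^2\theta(y/R)\). Then \(\chi_R\ge|y|^2\mathbf 1_{\{|y|>R\}}\), and \(|\nabla\chi_R(y)|\le C|y|\) with \(C\) independent of \(R\). Using \eqref{eq:pcpsieq}, I will derive
\[
 \frac{d}{d\tau}\int\chi_R|\psi|^2\,dy
 =2\,\mathrm{Im}\int\nabla\chi_R\cdot\overline\psi\,\nabla\psi\,dy .
\]
The nonlinear term drops out because \(\mathcal N_{b,\mu}(\psi)\overline\psi=A_\psi|\psi|^q\) is real, with \(A_\psi\) as in the proof of Lemma~\ref{lem:WNP1}; it is also integrable against \(\chi_R\), by Lemma~\ref{lem:NVB1}-type bounds applied after a truncation of the weight. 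Hence
\[
 \Bigl|\frac{d}{d\tau}\int\chi_R|\psi|^2\Bigr|
 \lesssim\|y\psi(\tau)\|_{L^2}\|\nabla\psi(\tau)\|_{L^2}
 \lesssim(1-\tau)^{-\beta/2},
\]
uniformly in \(R\). Since \(\beta<1\), the right-hand side is integrable on \([0,1)\).

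\emph{Step 3: conclusion.} Given \(\varepsilon>0\), first choose \(\tau_0<1\) so that \(\int_{\tau_0}^1(1-r)^{-\beta/2}dr<\varepsilon\). For every \(\tau\in[\tau_0,1)\) this gives \(\int_{|y|>R}|y|^2|\psi(\tau)|^2\le\int\chi_R|\psi(\tau_0)|^2+C\varepsilon\). Since \(\psi(\tau_0)\in\Sigma\), dominated convergence yields \(\int\chi_R|\psi(\tau_0)|^2\le\int_{|y|>R/2}|y|^2|\psi(\tau_0)|^2\to0\) as \(R\to\infty\). Taking \(\limsup_{\tau\to1^-}\), then \(R\to\infty\), then \(\varepsilon\to0\) proves \eqref{eq:BPTO}.

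The main obstacle is the rigorous justification of the virial identity in Step 2 for \(\Sigma\)-valued energy solutions with the unbounded weight \(\chi_R\). I would first prove the identity with the bounded weight \(\chi_R\,\vartheta(y/M)\), where \(\vartheta\) is a fixed cutoff equal to \(1\) near the origin. For this I would use the weak formulation \eqref{eq:WFI1}, or a smooth approximation as in Proposition~\ref{prop:BPCP1}, together with the finiteness of \(\mathcal I_{b,\mu}(\psi)\) to handle the nonlinear pairing. I would then let \(M\to\infty\), using \(\psi\in C([0,1);\Sigma)\) and the uniform bound \(|\nabla(\chi_R\vartheta(\cdot/M))|\lesssim|y|\).
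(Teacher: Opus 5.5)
Your proposal is correct and follows essentially the same route as the paper. Both arguments reduce to the tail second moment $\int_{|y|>R}|y|^2|\psi(\tau,y)|^2\,dy$, use a localized virial identity whose weight has gradient $O(|y|)$ uniformly in $R$ to get $\bigl|\tfrac{d}{d\tau}\int\chi_R|\psi|^2\bigr|\lesssim\|y\psi(\tau)\|_{L^2}\|\nabla\psi(\tau)\|_{L^2}\lesssim(1-\tau)^{-\beta/2}$, and finish with dominated convergence at a fixed $\tau_0$ close to $1$. Your weight $|y|^2\theta(y/R)$ plays the role of the paper's $(R\chi(y/R))^2$, and your truncation in $M$ is a slightly more careful justification of the virial identity than the paper's direct appeal to $\psi\in C([0,1);\Sigma)$.
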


\begin{proof}
Let
\[
 \tau=\frac{t}{1+t},
 \qquad
 \lambda=1-\tau=\frac1{1+t},
 \qquad
 y=\lambda x.
\]
By the definition \eqref{eq:inhCon} of the pseudoconformal transform
and the change of variables \(y=x/(1+t)\),
\begin{equation}\label{eq:BPMC}
 \int_{|x|>R(1+t)}
 \frac{|x|^2}{(1+t)^2}
 |\phi(t,x)|^2\,dx
 =
 \int_{|y|>R}
 |y|^2|\psi(\tau,y)|^2\,dy.
\end{equation}
Since \(t\to+\infty\) if and only if \(\tau\to1^-\), it is therefore
enough to prove
\begin{equation}\label{eq:BPTTG}
 \lim_{R\to\infty}
 \limsup_{\tau\to1^-}
 \int_{|y|>R}
 |y|^2|\psi(\tau,y)|^2\,dy
 =0.
\end{equation}
By \eqref{eq:pcSI} and \eqref{eq:BPRMP}, one has
\begin{equation}\label{eq:BPYUB}
 \sup_{0\le\tau<1}
 \|y\psi(\tau)\|_{L^2}
 <\infty.
\end{equation}
Choose a radial function
\(\chi\in C^\infty(\R^d;[0,\infty))\) such that
\[
 \chi(y)=0
 \quad\text{for }|y|\le\frac12,
 \qquad
 \chi(y)=|y|
 \quad\text{for }|y|\ge1,
\]
and
\[
 \chi(y)\lesssim |y|,
 \qquad
 |\nabla\chi(y)|\lesssim1.
\]
For \(R>0\), set
\[
 \chi_R(y):=R\chi(y/R),
 \qquad
 M_R(\tau):=
 \int_{\R^d}\chi_R(y)^2|\psi(\tau,y)|^2\,dy.
\]
Then
\begin{equation}\label{eq:BBcsi}
\chi_R(y)=|y|
 \quad\text{for }|y|\ge R,
 \qquad
 \chi_R(y)\lesssim |y|,
 \qquad
 \|\nabla\chi_R\|_{L^\infty}\lesssim1,
\end{equation}
with constants independent of \(R\). In particular,
\eqref{eq:BPYUB} gives
\begin{equation}\label{eq:BPMRU}
 \sup_{R>0}\sup_{0\le\tau<1}M_R(\tau)^{1/2}
 <\infty.
\end{equation}
By an use of
\(
 \overline{\psi}\,\mathcal N_{b,\mu}(\psi)\in\mathbb R
\)
and
\(
 \psi\in C([0,1);\Sigma),
\)
we obtain
\begin{equation}\label{eq:BPMRD}
 M_R'(\tau)
 =
 4\int_{\R^d}
 \chi_R\nabla\chi_R\cdot
 \operatorname{Im}(\overline{\psi}\nabla\psi)\,dy
\end{equation}
for almost every \(\tau\in(0,1)\). Futhermore, by \eqref{eq:BPYUB} and \eqref{eq:psia}, we attain
\begin{equation}\label{eq:BPMRDU}
 |M_R'(\tau)|
 \lesssim
 (1-\tau)^{-\beta/2},
\end{equation}
with a constant independent of \(R\). Since \(0<\beta<1\), a direct calculations infers
\[
 \lim_{\tau_0\to1^-}
 \int_{\tau_0}^1
 (1-s)^{-\beta/2}\,ds
 =0.
\]
Fix \(\varepsilon>0\), and choose
\(\tau_0\in(0,1)\) so close to \(1\) that
\begin{equation}\label{eq:BPTSM1}
 C\int_{\tau_0}^1
 (1-s)^{-\beta/2}\,ds
 <\frac{\varepsilon}{2},
\end{equation}
where \(C\) is the hidden constant in \eqref{eq:BPMRDU}. Since \(\psi(\tau_0)\in\Sigma\), we have
\(|y|\psi(\tau_0)\in L^2\). Moreover, for every \(y\in\R^d\), 
\[
 \lim_{R\to\infty}\chi_R(y)=0,
\]
so \( \chi_R(y)\lesssim |y|\) in \eqref{eq:BBcsi} and the dominated convergence theorem give
\[
 \lim_{R\to\infty}M_R(\tau_0)=0.
\]
Hence there exists \(R_\varepsilon>0\) such that
\[
 M_{R_\varepsilon}(\tau_0)
 <\frac{\varepsilon}{2}.
\]
Integrating \eqref{eq:BPMRDU}, for every
\(\tau\in[\tau_0,1)\), brings to
\[
 M_{R_\varepsilon}(\tau)
 \le
 M_{R_\varepsilon}(\tau_0)
 +
 C\int_{\tau_0}^\tau
 (1-s)^{-\beta/2}\,ds
 <\varepsilon.
\]
Since
\(\chi_{R_\varepsilon}(y)=|y|\) for
\(|y|\ge R_\varepsilon\), it follows that
\[
 \sup_{\tau_0\le\tau<1}
 \int_{|y|>R_\varepsilon}
 |y|^2|\psi(\tau,y)|^2\,dy
 <\varepsilon.
\]
By monotonicity, the same estimate holds for every
\(R\ge R_\varepsilon\). Therefore
\[
 \lim_{R\to\infty}
 \limsup_{\tau\to1^-}
 \int_{|y|>R}
 |y|^2|\psi(\tau,y)|^2\,dy
 =0,
\]
which proves \eqref{eq:BPTTG}. Finally, \eqref{eq:BPMC} yields
\eqref{eq:BPTO}.
\end{proof}
The previous estimate is used in the next step of the
\(H^1\)-scattering argument.

\begin{proposition}
\label{prop:BPGC}
 Assume \eqref{eq:ass} and \eqref{eq:frf}. Let
\(
\phi\in C\bigl(\mathbb R;H^1(\mathbb R^d)\bigr)
\)
be a global  solution of
\eqref{eq:ME}. Let
\(
 \phi_{+}\in H^1(\mathbb R^d),
\)
and assume that
\begin{equation}\label{eq:BPMA}
 \lim_{t\to+\infty}
 \bigl\|
 |\phi(t)|-|e^{it\Delta}\phi_+|
 \bigr\|_{L^2}
 =0.
\end{equation}
Then
\begin{equation}\label{eq:BPGNL}
 \lim_{t\to+\infty}
 \|\nabla\phi(t)\|_{L^2}
 =
 \|\nabla\phi_+\|_{L^2}.
\end{equation}
\end{proposition}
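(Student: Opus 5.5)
The plan is to compare both gradient norms with the weighted mass \(\tfrac12\|\tfrac{x}{1+t}f\|_{L^2}\), which depends only on \(|f|\). To pass from the hypothesis \eqref{eq:BPMA} to this unbounded weight, I would use the truncated functional
\[
 Q_R(f,t):=\Bigl\|\min\Bigl\{\tfrac{|x|}{1+t},R\Bigr\}\,|f|\Bigr\|_{L^2},\qquad R>0 .
\]
It depends only on \(|f|\) and satisfies \(|Q_R(f,t)-Q_R(g,t)|\le R\,\||f|-|g|\|_{L^2}\). Throughout, \(u(t):=e^{it\Delta}\phi_+\).

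\emph{Step 1 (nonlinear side).} By \eqref{eq:BPGV2},
\[
 \|\nabla\phi(t)\|_{L^2}=\tfrac12\bigl\|\tfrac{x}{1+t}\phi(t)\bigr\|_{L^2}+o(1).
\]
The quantity \(\tfrac12\|\tfrac{x}{1+t}\phi(t)\|_{L^2}\) differs from \(\tfrac12Q_R(\phi(t),t)\) by at most \(\tfrac12\) times the square root of \(\int_{|x|>R(1+t)}\tfrac{|x|^2}{(1+t)^2}|\phi|^2\,dx\). By Lemma~\ref{lem:BPT}, this tail has \(\limsup_{t}\) tending to \(0\) as \(R\to\infty\). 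The hypothesis \eqref{eq:BPMA} and the Lipschitz bound then give \(Q_R(\phi(t),t)-Q_R(u(t),t)\to0\) for each fixed \(R\). Hence it suffices to show
\[
 \lim_{R\to\infty}\limsup_{t\to\infty}\bigl|Q_R(u(t),t)-2\|\nabla\phi_+\|_{L^2}\bigr|=0 .
\]

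\emph{Step 2 (free side, Schwartz data).} For \(g\in\Sigma\) with \(\nabla g\) nice, set \(v=e^{it\Delta}g\). The identity \((x+2it\nabla)v=e^{it\Delta}(xg)\) gives \(\|\nabla v-\tfrac{ix}{2t}v\|_{L^2}\le\|xg\|_{L^2}/(2t)\). Replacing \(t\) by \(1+t\) costs \(O(t^{-1})\|\tfrac{x}{t}v\|_{L^2}\), which is \(O(t^{-1})\). Hence \(\tfrac{x}{1+t}v=-2i\,e^{it\Delta}\nabla g+o(1)\) in \(L^2\). This yields the upper bound \(\limsup_t Q_R(v,t)\le2\|\nabla g\|_{L^2}\). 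For the lower bound, \(Q_R(v,t)\ge\|\tfrac{x}{1+t}v\,\mathbf 1_{|x|<R(1+t)}\|_{L^2}\), which is \(2\|e^{it\Delta}\nabla g\,\mathbf 1_{|x|<R(1+t)}\|_{L^2}+o(1)\). By the Fraunhofer asymptotics \(e^{it\Delta}h\approx(2it)^{-d/2}e^{i|x|^2/4t}\hat h(x/2t)\) (up to normalisation), this tends to \(2\|\widehat{\nabla g}\,\mathbf 1_{|\xi|<R/2}\|_{L^2}\). The scaling between \(x/(1+t)\) and \(\xi\) only changes the constant in front of \(R\).

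\emph{Step 3 (approximation, the delicate point).} The order of quantifiers matters here, because \(Q_R\) is only \(R\)-Lipschitz in \(L^2\) and the weight is unbounded. Given \(\varepsilon>0\), first fix \(R\) so large that \(\|\widehat{\nabla\phi_+}\mathbf 1_{|\xi|\ge R/2}\|_{L^2}<\varepsilon\); this choice is independent of \(t\) and of the approximant. Then pick \(g\in\mathcal S\) with \(\|\nabla g-\nabla\phi_+\|_{L^2}<\varepsilon\) and \(R\|g-\phi_+\|_{L^2}<\varepsilon\). Then \(|Q_R(u,t)-Q_R(v,t)|<\varepsilon\) for all \(t\). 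Step 2 together with Plancherel gives
\[
 2\|\nabla\phi_+\|_{L^2}-6\varepsilon\le\liminf_t Q_R(u,t)\le\limsup_t Q_R(u,t)\le2\|\nabla\phi_+\|_{L^2}+3\varepsilon .
\]
The same bounds hold for every larger \(R\), since enlarging \(R\) only shrinks the Fourier tail. Combined with Step 1, this gives \eqref{eq:BPGNL}. I expect the main obstacle to be this interchange of limits. In particular, the tail control for the nonlinear solution must come from Lemma~\ref{lem:BPT}, while the tail control for the free solution must come from the dispersive profile and be fixed before the approximant \(g\) is chosen.
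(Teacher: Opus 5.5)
Your proof is correct and shares the paper's skeleton: you reduce via \eqref{eq:BPGV2} to $\|\frac{x}{2(1+t)}\phi(t)\|_{L^2}$, control the region $|x|>R(1+t)$ with Lemma~\ref{lem:BPT}, and use \eqref{eq:BPMA} only where the weight is at most $R$. Your Lipschitz bound for $Q_R$ plays the role of the paper's comparison of $a_R(t)$ and $b_R(t)$.

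The real difference is on the free side. The paper applies the Fraunhofer asymptotics \eqref{eq:BPFA} directly to $\phi_+$; this step needs only $\phi_+\in L^2$. It then evaluates the truncated weighted norm of the profile exactly by $\xi=x/(2t)$, giving \eqref{eq:BPMME}. Because the paper's cutoff is a sharp indicator, this yields $I_R(t)\to A_R$ at once. The upper bound for $Q(t)$ then uses only $A_R\le\|\nabla\phi_+\|_{L^2}^2$, and $\phi_+$ is never approximated.

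Your cutoff $\min\{|x|/(1+t),R\}$ keeps a contribution from outside the ball. You therefore need a separate upper bound for the free weighted mass, which you take from $J(t)e^{it\Delta}=e^{it\Delta}x$. That identity requires $g\in\Sigma$, and this is what forces your density step and its quantifier bookkeeping.

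The detour is avoidable within your own framework. With $G_t$ from \eqref{eq:BPFMD}, your Lipschitz bound gives $|Q_R(u(t),t)-Q_R(G_t,t)|\le R\|u(t)-G_t\|_{L^2}\to0$. The same substitution gives
\[
 Q_R(G_t,t)^2=\int_{\R^d}\min\Bigl\{\tfrac{2t|\xi|}{1+t},R\Bigr\}^2|\widehat{\phi_+}(\xi)|^2\,d\xi
 \xrightarrow[t\to+\infty]{}\int_{\R^d}\min\{2|\xi|,R\}^2|\widehat{\phi_+}(\xi)|^2\,d\xi
\]
by dominated convergence. The right-hand side increases to $4\|\nabla\phi_+\|_{L^2}^2$ as $R\to\infty$. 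So the ``delicate interchange of limits'' you anticipate comes from routing the upper bound through smooth approximants; it is not an intrinsic obstacle.

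What your route buys is an explicit $O(t^{-1})$ rate for the free weighted mass of $\Sigma$ data, and a link with the vector field $J(t)$ of Section~\ref{sec:BJUS}. The paper's route is shorter and uses only $\phi_+\in H^1$.
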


\begin{proof}
The argument follows the proof of
\cite[Theorem~1.2, in particular equations~(4.6)-(4.10)]
{BurqGeorgievTzvetkovVisciglia}. See also
\cite{TzvetkovVisciglia2021} for more details. For convenience, set
\(
 \rho(t):=1+t.
\)
Lemma~\ref{lem:BPGV1} enhances to
\begin{equation}
 \lim_{t\to+\infty}
 \left\|
 \nabla\phi(t)
 -
 \frac{i x}{2\rho(t)}\phi(t)
 \right\|_{L^2}
 =0,
\end{equation}
and we achieve
\begin{align*}
 &
 \left|
 \|\nabla\phi(t)\|_{L^2}
 -
 \left\|
 \frac{x}{2\rho(t)}\phi(t)
 \right\|_{L^2}
 \right|
 =
 \left|
 \|\nabla\phi(t)\|_{L^2}
 -
 \left\|
 \frac{i x}{2\rho(t)}\phi(t)
 \right\|_{L^2}
 \right|
 \le
 \left\|
 \nabla\phi(t)
 -
 \frac{i x}{2\rho(t)}\phi(t)
 \right\|_{L^2}.
\end{align*}
Therefore, one gets
\begin{equation}\label{eq:BPGMDL}
 \lim_{t\to+\infty}
 \left|
 \|\nabla\phi(t)\|_{L^2}
 -
 \left\|
 \frac{x}{2\rho(t)}\phi(t)
 \right\|_{L^2}
 \right|
 =0.
\end{equation}
It is then enough to establish
\begin{equation}\label{eq:BPRML}
 \lim_{t\to+\infty}
 \left\|
 \frac{x}{2\rho(t)}\phi(t)
 \right\|_{L^2}
 =
 \|\nabla\phi_+\|_{L^2}.
\end{equation}
Fix \(R>0\) now, and define
\[
 \Omega_R(t)
 :=
 \{x\in\mathbb R^d:|x|<R\rho(t)\}
\]
and
\[
 w_t(x)
 :=
 \frac{|x|}{2\rho(t)}.
\]
For every \(x\in\Omega_R(t)\),
\begin{equation}\label{eq:BPIWB}
 0\le w_t(x)<\frac R2.
\end{equation}
Let
\(
 F_t:=e^{it\Delta}\phi_+.
\)
We use the Fourier transform
\[
 \widehat h(\xi)
 :=
 (2\pi)^{-d/2}
 \int_{\R^d}e^{-ix\cdot\xi}h(x)\,dx,
\]
and for \(t>0\), define
\begin{equation}\label{eq:BPFMD}
 G_t(x)
 :=
 (2it)^{-d/2}
 e^{\frac{i|x|^2}{4t}}
 \widehat{\phi_+}\left(\frac{x}{2t}\right).
\end{equation}
By the standard \(L^2\)-asymptotic formula for the free
Schr\"odinger group (see, for example,
\cite[Eq.~(4.8)]{BurqGeorgievTzvetkovVisciglia} or also \cite{Dollard1971,ReedSimonII}), one arrives at
\begin{equation}\label{eq:BPFA}
 \lim_{t\to+\infty}
 \|F_t-G_t\|_{L^2}
 =0.
\end{equation}
Consequently, we obtain
\begin{equation}\label{eq:BPFMA}
 \lim_{t\to+\infty}
 \bigl\|
 |F_t|-|G_t|
 \bigr\|_{L^2}
 =0.
\end{equation}
We now compare \(\phi(t)\) and \(G_t\) in \(\Omega_R(t)\). By
\eqref{eq:BPIWB}, one achieves
\begin{align*}
 &
 \left\|
 w_t\bigl(|\phi(t)|-|F_t|\bigr)
 \right\|_{L^2(\Omega_R(t))}
 \le
 \frac R2
 \bigl\|
 |\phi(t)|-|F_t|
 \bigr\|_{L^2}.
\end{align*}
Thus \eqref{eq:BPMA} gives
\begin{equation}
 \lim_{t\to+\infty}
 \left\|
 w_t\bigl(|\phi(t)|-|F_t|\bigr)
 \right\|_{L^2(\Omega_R(t))}
 =0.
\end{equation}
Similarly, by
\eqref{eq:BPFMA}, we deduce
\begin{equation}
 \lim_{t\to+\infty}
 \left\|
 w_t\bigl(|F_t|-|G_t|\bigr)
 \right\|_{L^2(\Omega_R(t))}
 =0.
\end{equation}
An application of the triangle inequality therefore yields
\begin{equation}\label{eq:BPWMC}
 \lim_{t\to+\infty}
 \left\|
 w_t\bigl(|\phi(t)|-|G_t|\bigr)
 \right\|_{L^2(\Omega_R(t))}
 =0.
\end{equation}
We introduce
\[
 a_R(t)
 :=
 \|w_t\phi(t)\|_{L^2(\Omega_R(t))}
\]
and
\[
 b_R(t)
 :=
 \|w_tG_t\|_{L^2(\Omega_R(t))}.
\]
An use of the reverse
triangle inequality and
\eqref{eq:BPWMC} give
\begin{equation}\label{eq:BPARBRL}
 \lim_{t\to+\infty}
 |a_R(t)-b_R(t)|
 =0.
\end{equation}
First observe that
\[
 b_R(t)
 \le
 \frac R2\|G_t\|_{L^2},
\]
then by using \eqref{eq:BPFMD} and the change of
variables
\begin{equation}\label{eq:ChVar}
 \xi=\frac{x}{2t},
 \qquad
 dx=(2t)^d\,d\xi,
\end{equation}
we compute
\begin{align*}
 \|G_t\|_{L^2}^2
 =
 \int_{\mathbb R^d}
 (2t)^{-d}
 \left|
 \widehat{\phi_+}\left(\frac{x}{2t}\right)
 \right|^2dx
 =
 \int_{\mathbb R^d}
 |\widehat{\phi_+}(\xi)|^2\,d\xi
 =
 \|\phi_+\|_{L^2}^2.
\end{align*}
Hence we have
\begin{equation}\label{eq:BPBRB}
 b_R(t)\le\frac R2\|\phi_+\|_{L^2}
\end{equation}
for every \(t>0\). Let
\[
 \delta_R(t):=|a_R(t)-b_R(t)|.
\]
From the elementary inequality
\[
 a_R(t)\le b_R(t)+\delta_R(t),
\]
we obtain
\begin{align*}
 |a_R(t)^2-b_R(t)^2|
 &=
 \delta_R(t)\bigl(a_R(t)+b_R(t)\bigr)
 \le
 \delta_R(t)\bigl(2b_R(t)+\delta_R(t)\bigr).
\end{align*}
Using \eqref{eq:BPARBRL} and
\eqref{eq:BPBRB}, we conclude that
\begin{equation}\label{eq:BPSC}
 \lim_{t\to+\infty}
 |a_R(t)^2-b_R(t)^2|
 =0.
\end{equation}
It remains to compute \(b_R(t)^2\). By definition, we can write
\begin{align*}
 b_R(t)^2
 &=
 \int_{|x|<R\rho(t)}
 \frac{|x|^2}{4\rho(t)^2}
 (2t)^{-d}
 \left|
 \widehat{\phi_+}\left(\frac{x}{2t}\right)
 \right|^2dx.
\end{align*}
Using again \eqref{eq:ChVar} one has
\[
 \frac{|x|^2}{4\rho(t)^2}
 =
 \left(\frac{t}{\rho(t)}\right)^2|\xi|^2,
\]
Therefore,
\begin{equation}\label{eq:BPMME}
 b_R(t)^2
 =
 \left(\frac{t}{\rho(t)}\right)^2
 \int_{|\xi|<\frac{R\rho(t)}{2t}}
 |\xi|^2|\widehat{\phi_+}(\xi)|^2\,d\xi.
\end{equation}
Since \(\rho(t)=1+t\), it is easy to see that
\begin{equation}\label{eq:BPRL}
 \lim_{t\to+\infty}
 \frac{t}{\rho(t)}
 =1
\end{equation}
and
\begin{equation}
 \lim_{t\to+\infty}
 \frac{R\rho(t)}{2t}
 =
 \frac R2.
\end{equation}
Moreover, the fact that \(\phi_+\in H^1(\mathbb R^d)\), guarantees
\(
 |\xi|^2|\widehat{\phi_+}(\xi)|^2
 \in L^1(\mathbb R^d).
\)
For almost every \(\xi\), one accomplishes
\[
 \lim_{t\to+\infty}
 \mathbf 1_{\left\{
 |\xi|<\frac{R\rho(t)}{2t}
 \right\}}
 =
 \mathbf 1_{\{|\xi|<R/2\}}.
\]
 Hence the dominated
convergence theorem gives
\begin{equation}\label{eq:BPMBL}
 \lim_{t\to+\infty}
 \int_{|\xi|<\frac{R\rho(t)}{2t}}
 |\xi|^2|\widehat{\phi_+}(\xi)|^2\,d\xi
 =
 \int_{|\xi|<R/2}
 |\xi|^2|\widehat{\phi_+}(\xi)|^2\,d\xi.
\end{equation}
A combined use of
\eqref{eq:BPMME},
\eqref{eq:BPRL}, and
\eqref{eq:BPMBL}, gives
\begin{equation}\label{eq:BPMIL}
 \lim_{t\to+\infty}
 b_R(t)^2
 =
 \int_{|\xi|<R/2}
 |\xi|^2|\widehat{\phi_+}(\xi)|^2\,d\xi.
\end{equation}
Since
\[
 a_R(t)^2
 =
 \int_{|x|<R(1+t)}
 \frac{|x|^2}{4(1+t)^2}
 |\phi(t,x)|^2\,dx,
\]
equations
\eqref{eq:BPSC} and
\eqref{eq:BPMIL} yield
\begin{equation}\label{eq:BPIML}
 \lim_{t\to+\infty}
 \int_{|x|<R(1+t)}
 \frac{|x|^2}{4(1+t)^2}
 |\phi(t,x)|^2\,dx
 =
 \int_{|\xi|<R/2}
 |\xi|^2|\widehat{\phi_+}(\xi)|^2\,d\xi
\end{equation}
for every fixed \(R>0\). Define now
\[
 Q(t)
 :=
 \left\|
 \frac{x}{2(1+t)}\phi(t)
 \right\|_{L^2}^2.
\]
For \(R>0\), decompose
\begin{equation}\label{eq:dec1}
 Q(t)=I_R(t)+E_R(t),
\end{equation}
where
\[
 I_R(t)
 :=
 \int_{|x|\le R(1+t)}
 \frac{|x|^2}{4(1+t)^2}
 |\phi(t,x)|^2\,dx
\]
and
\[
 E_R(t)
 :=
 \int_{|x|>R(1+t)}
 \frac{|x|^2}{4(1+t)^2}
 |\phi(t,x)|^2\,dx.
\]
Let us define
\[
 A_R
 :=
 \int_{|\xi|<R/2}
 |\xi|^2|\widehat{\phi_+}(\xi)|^2\,d\xi.
\]
The identity \eqref{eq:BPIML} leads to
\begin{equation}\label{eq:BPIRL}
 \lim_{t\to+\infty}I_R(t)=A_R.
\end{equation}
Lemma~\ref{lem:BPT} gives
\begin{equation}\label{eq:BPERT}
 \lim_{R\to\infty}
 \limsup_{t\to+\infty}
 E_R(t)
 =0,
\end{equation}
We observe that, for every \(R>0\),
\(
 Q(t)\ge I_R(t).
\)
Therefore, we have
\[
 \liminf_{t\to+\infty}Q(t)
 \ge A_R.
\]
The monotone convergence theorem brings to
\begin{equation}\label{eq:BPARL}
 \lim_{R\to\infty}A_R
 =
 \int_{\mathbb R^d}
 |\xi|^2|\widehat{\phi_+}(\xi)|^2\,d\xi,
\end{equation}
and hence we see that
\begin{equation}\label{eq:BPQL1}
 \liminf_{t\to+\infty}Q(t)
 \ge
 \int_{\mathbb R^d}
 |\xi|^2|\widehat{\phi_+}(\xi)|^2\,d\xi.
\end{equation}
Using now \eqref{eq:dec1} and
\eqref{eq:BPIRL}, we obtain
\[
 \limsup_{t\to+\infty}Q(t)
 \le
 A_R+
 \limsup_{t\to+\infty}E_R(t).
\]
Letting \(R\to\infty\), and utilizing
\eqref{eq:BPERT} and
\eqref{eq:BPARL}, gives
\begin{equation}\label{eq:BPQL2}
 \limsup_{t\to+\infty}Q(t)
 \le
 \int_{\mathbb R^d}
 |\xi|^2|\widehat{\phi_+}(\xi)|^2\,d\xi.
\end{equation}
By a combination of
\eqref{eq:BPQL1} and
\eqref{eq:BPQL2}, we obtain
\begin{equation}
 \lim_{t\to+\infty}Q(t)
 =
 \int_{\mathbb R^d}
 |\xi|^2|\widehat{\phi_+}(\xi)|^2\,d\xi.
\end{equation}
The Plancherel's theorem implies
\[
 \int_{\mathbb R^d}
 |\xi|^2|\widehat{\phi_+}(\xi)|^2\,d\xi
 =
 \|\nabla\phi_+\|_{L^2}^2,
\]
and thus one achieves
\[
 \lim_{t\to+\infty}
 \left\|
 \frac{x}{2(1+t)}\phi(t)
 \right\|_{L^2}^2
 =
 \|\nabla\phi_+\|_{L^2}^2.
\]
This proves
\eqref{eq:BPRML}. Finally, one can write
\begin{align*}
 &
 \left|
 \|\nabla\phi(t)\|_{L^2}
 -
 \|\nabla\phi_+\|_{L^2}
 \right|
\le
 \left|
 \|\nabla\phi(t)\|_{L^2}
 -
 \left\|
 \frac{x}{2(1+t)}\phi(t)
 \right\|_{L^2}
 \right|
+
 \left|
 \left\|
 \frac{x}{2(1+t)}\phi(t)
 \right\|_{L^2}
 -
 \|\nabla\phi_+\|_{L^2}
 \right|.
\end{align*}
The limit of the first term is zero by
\eqref{eq:BPGMDL}, and the limit of
the second term is zero by
\eqref{eq:BPRML}. Consequently we arrive at
\[
 \lim_{t\to+\infty}
 \left|
 \|\nabla\phi(t)\|_{L^2}
 -
 \|\nabla\phi_+\|_{L^2}
 \right|
 =0.
\]
This proves
\eqref{eq:BPGNL}.
\end{proof}

Having established \(L^2\)-scattering in the preceding section, we now
show that the corresponding scattering state belongs to \(H^1\) and
that the convergence holds in \(H^1\). More precisely

\begin{proposition}\label{prop:H1S}
 Assume \eqref{eq:ass} and \eqref{eq:frf}. Let \(\phi_0\in\Sigma\), and let
\(
\phi\in C\bigl(\mathbb R;H^1(\mathbb R^d)\bigr)
\)
be the corresponding global defocusing solution of
\eqref{eq:ME}. Assume that there exist
\(\phi_{\pm}\in L^2(\R^d)\) such that
\begin{equation}\label{eq:BPLS}
 \lim_{t\to\pm\infty}
 \|\phi(t)-e^{it\Delta}\phi_+\|_{L^2}
 =0.
\end{equation}
Then \(\phi_{\pm}\in H^1(\R^d)\), and
\begin{equation}\label{eq:BPHS}
 \lim_{t\to+\infty}
 \|\phi(t)-e^{it\Delta}\phi_+\|_{H^1}
 =0.
\end{equation}
\end{proposition}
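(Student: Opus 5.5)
We follow the Hilbert-space strategy of \cite{BurqGeorgievTzvetkovVisciglia}: first show \(\phi_+\in H^1\) with \(e^{-it\Delta}\phi(t)\rightharpoonup\phi_+\) weakly in \(H^1\), then upgrade to strong convergence via convergence of norms, which Proposition~\ref{prop:BPGC} supplies. Only \(t\to+\infty\) is treated; \(t\to-\infty\) follows by applying the same argument to \(\overline{\phi(-t,x)}\).

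\emph{Step 1: \(\phi_+\in H^1\) and weak convergence.} Set \(u(t):=e^{-it\Delta}\phi(t)\). By \eqref{eq:CM2}, \eqref{eq:CE3} and unitarity of \(e^{-it\Delta}\) on \(H^1\), we have \(\sup_{t\ge0}\|u(t)\|_{H^1}<\infty\). By \eqref{eq:BPLS}, \(u(t)\to\phi_+\) in \(L^2\). Hence for any sequence \(t_n\to\infty\), every weak \(H^1\)-limit point of \(u(t_n)\) must coincide with \(\phi_+\), since weak \(H^1\) convergence implies weak \(L^2\) convergence. By weak compactness of bounded sets in \(H^1\), this gives \(\phi_+\in H^1\), \(u(t)\rightharpoonup\phi_+\) in \(H^1\), and \(\|\nabla\phi_+\|_{L^2}\le\liminf_{t}\|\nabla\phi(t)\|_{L^2}\).

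\emph{Step 2: norm convergence.} Since \(\bigl||\phi(t)|-|e^{it\Delta}\phi_+|\bigr|\le|\phi(t)-e^{it\Delta}\phi_+|\) pointwise, \eqref{eq:BPLS} implies \eqref{eq:BPMA}. With \(\phi_+\in H^1\) now established, Proposition~\ref{prop:BPGC} gives \eqref{eq:BPGNL}, that is \(\|\nabla u(t)\|_{L^2}=\|\nabla\phi(t)\|_{L^2}\to\|\nabla\phi_+\|_{L^2}\). Because \(\|u(t)\|_{L^2}\to\|\phi_+\|_{L^2}\) as well, we get \(\|u(t)\|_{H^1}\to\|\phi_+\|_{H^1}\).

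\emph{Step 3: conclusion.} In the Hilbert space \(H^1\), expand
\[
\|u(t)-\phi_+\|_{H^1}^2=\|u(t)\|_{H^1}^2-2\operatorname{Re}\ip{u(t)}{\phi_+}_{H^1}+\|\phi_+\|_{H^1}^2 .
\]
The weak convergence from Step 1 and the norm convergence from Step 2 make the right-hand side tend to zero. Unitarity of \(e^{it\Delta}\) on \(H^1\) then yields \eqref{eq:BPHS}.

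The only delicate point is the ordering of the argument. Proposition~\ref{prop:BPGC} presupposes \(\phi_+\in H^1\), so the weak-compactness identification in Step 1 must come first. That step depends solely on the uniform \(H^1\) bound coming from the defocusing energy \eqref{eq:CE3}, and this is exactly what makes the ordering legitimate.
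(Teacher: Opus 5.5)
Your proposal is correct and follows the paper's proof. The steps match in order: weak compactness in \(H^1\), from the conserved mass and energy, gives \(\phi_+\in H^1\) and \(e^{-it\Delta}\phi(t)\rightharpoonup\phi_+\). Proposition~\ref{prop:BPGC}, fed by the pointwise reverse triangle inequality, then gives \(\|\nabla\phi(t)\|_{L^2}\to\|\nabla\phi_+\|_{L^2}\). Finally the Hilbert-space expansion upgrades weak plus norm convergence to strong convergence, and your remark that \(\phi_+\in H^1\) must be identified before invoking Proposition~\ref{prop:BPGC} reflects exactly the paper's ordering.
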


\begin{proof}
The argument is the same as in
\cite[Section~3]{BurqGeorgievTzvetkovVisciglia}. We sketch it for aim of completeness. Set
\[
 v(t):=e^{-it\Delta}\phi(t).
\]
By the unitarity of the free Schr\"odinger propagator on \(L^2\),
\eqref{eq:BPLS} is equivalent to
\begin{equation}\label{eq:BPVLS}
 \lim_{t\to+\infty}
 \|v(t)-\phi_+\|_{L^2}
 =0.
\end{equation}
By the conservation laws \eqref{eq:CM2}, \eqref{eq:CE2} and since \(e^{-it\Delta}\) is unitary on \(H^1\), it follows that
\[
 \sup_{t\ge0}\|v(t)\|_{H^1}<\infty.
\]
Hence, by weak compactness in \(H^1\), every sequence
\((t_n)_{n\ge1}\) satisfying
\(
 t_n\to+\infty,
\)
as \(n\to\infty\),
has a subsequence converging weakly in \(H^1\). The strong
\(L^2\)-convergence \eqref{eq:BPVLS} shows that every
such weak limit is necessarily equal to \(\phi_+\). Therefore,
\begin{equation}\label{eq:BPWH}
 \phi_+\in H^1(\R^d),
 \qquad
 v(t)\rightharpoonup\phi_+
 \quad\text{in }H^1(\R^d)
\end{equation}
as \(t\to+\infty\). Moreover, the triangle inequality and \eqref{eq:BPLS} give
\begin{equation}
 \lim_{t\to+\infty}
 \bigl\|
 |\phi(t)|-|e^{it\Delta}\phi_+|
 \bigr\|_{L^2}
 =0.
\end{equation}
Proposition~\ref{prop:BPGC} therefore yields
\begin{equation}
 \lim_{t\to+\infty}
 \|\nabla\phi(t)\|_{L^2}
 =
 \|\nabla\phi_+\|_{L^2}.
\end{equation}

Since the free flow commutes with \(\nabla\) and is unitary on \(L^2\),
\[
 \|\nabla v(t)\|_{L^2}
 =
 \|\nabla\phi(t)\|_{L^2}.
\]
Hence
\begin{equation}\label{eq:BPVGNL}
 \lim_{t\to+\infty}
 \|\nabla v(t)\|_{L^2}
 =
 \|\nabla\phi_+\|_{L^2}.
\end{equation}
On the other hand, \eqref{eq:BPVLS} implies
\begin{equation}\label{eq:BPVLNL}
 \lim_{t\to+\infty}
 \|v(t)\|_{L^2}
 =
 \|\phi_+\|_{L^2}.
\end{equation}
From
\eqref{eq:BPVGNL} and
\eqref{eq:BPVLNL}, we acquire
\begin{align*}
 \lim_{t\to+\infty}\|v(t)\|_{H^1}^2
 =
 \lim_{t\to+\infty}
 \left(
 \|v(t)\|_{L^2}^2
 +
 \|\nabla v(t)\|_{L^2}^2
 \right)
 =
 \|\phi_+\|_{L^2}^2
 +
 \|\nabla\phi_+\|_{L^2}^2
 =
 \|\phi_+\|_{H^1}^2.
\end{align*}
It follows that
\begin{equation}\label{eq:BPVHNL}
 \lim_{t\to+\infty}
 \|v(t)\|_{H^1}
 =
 \|\phi_+\|_{H^1}.
\end{equation}
Finally, \(H^1(\R^d)\) is a Hilbert space. Therefore, the weak
convergence \eqref{eq:BPWH}, together with the norm
convergence \eqref{eq:BPVHNL}, implies
\begin{equation}\label{eq:BPVHS}
 \lim_{t\to+\infty}
 \|v(t)-\phi_+\|_{H^1}
 =0.
\end{equation}
Indeed, we earns
\begin{align*}
 \|v(t)-\phi_+\|_{H^1}^2
 &=
 \|v(t)\|_{H^1}^2
 +
 \|\phi_+\|_{H^1}^2
 -
 2\operatorname{Re}
 \langle v(t),\phi_+\rangle_{H^1},
\end{align*}
and
\[
 \lim_{t\to+\infty}
 \langle v(t),\phi_+\rangle_{H^1}
 =
 \|\phi_+\|_{H^1}^2
\]
by \eqref{eq:BPWH}. As a result we get
\[
 \lim_{t\to+\infty}
 \|v(t)-\phi_+\|_{H^1}^2
 =0,
\]
which is equivalent to
\eqref{eq:BPVHS}. By the unitarity of \(e^{it\Delta}\) on \(H^1\),
and
\eqref{eq:BPVHS}, we conclude that
\[
 \lim_{t\to+\infty}
 \|\phi(t)-e^{it\Delta}\phi_+\|_{H^1}
 =0.
\]
This proves \eqref{eq:BPHS}.
\end{proof}

\begin{proof}[Proof of Theorem~\ref{thm:MH1}]
By Theorem~\ref{thm:L2S}, there exists
\(\phi_+\in L^2(\R^d)\) such that
\[
 \lim_{t\to+\infty}
 \|\phi(t)-e^{it\Delta}\phi_+\|_{L^2}
 =0.
\]
Proposition~\ref{prop:H1S} then gives
\(\phi_+\in H^1(\R^d)\) and
\[
 \lim_{t\to+\infty}
 \|\phi(t)-e^{it\Delta}\phi_+\|_{H^1}
 =0.
\]
The case \(t<0\) follows from the argument described
at the beginning of this section. This concludes the proof.
\end{proof}

\section{Scattering in \(\Sigma\) for the
homogeneous Choquard equation}
\label{sec:BJUS}

The preceding sections establish \(H^1\)-scattering for the
inhomogeneous Choquard equation throughout the short-range
mass-subcritical interval. We now focus on the homogeneous case
\(b=0\) and prove the stronger conclusion of scattering in the
conformal space defined in \eqref{eq:introS}. Introduce the Galilean
vector field
\begin{equation}\label{eq.GVect}
 J(t):=x+2it\nabla.
\end{equation}
The decay of the nonlocal potential given in \eqref{eq:BJID}, together with some properties of the equation in \eqref{eq:ME}, yields convergence of
\(xe^{-it\Delta}\phi(t)\) in \(L^2\).
The argument applies throughout \(q_0<q<q_{\mathrm m}\), where
\(q_0=1+X_0\) and \(X_0>0\) is the positive root of
\[
 2dX^2+(d-2\mu-2)X-\mu-2=0.
\]
Throughout this section we treat only the case \(t\ge0\). Since the equation is invariant under the transformation
\(
 \phi(t,x)
 :=\overline{\phi(-t,x)},
\)
all the corresponding statements as \(t\to-\infty\) follow by applying
the same arguments to \(\widetilde\phi\).

\subsection{Estimates for the nonlocal potential term}

We use the bilinear form \(B\) associated with the Riesz kernel
\(I_\mu\) defined in \eqref{eq:Nonl}, together with the exponent
\(p_\mu\) introduced in Lemma~\ref{lem:BRFI}. The estimates
\eqref{eq:BHLS} and \eqref{eq:BBH} apply both to the nonlinearity and
to the terms arising after differentiation, and supply the following
estimate.
\begin{lemma}
\label{lem:BIG}
Let \(f\in W^{1,r}\), assume
\begin{equation}\label{eq:assA1}
 0<\mu<d,
 \qquad
 q\ge2,
\end{equation}
and
\begin{equation}\label{eq:assA2}
 r:=\frac{2dq}{d+\mu}.
\end{equation}
Define
\[
 \mathcal I_{0,\mu}(f)
 :=
 \iint_{\mathbb R^d\times\mathbb R^d}
 I_\mu(x-y)|f(x)|^q|f(y)|^q\,dx\,dy.
\]
Then
\begin{equation}\label{eq:Bil1}
 \|\Ncal_{0,\mu}(f)\|_{W^{1,r'}}
 \lesssim
 \mathcal I_{0,\mu}(f)^{\frac{q-1}{q}}\|f\|_{W^{1,r}}.
\end{equation}
The implicit constant depends only on \(d\), \(\mu\), and \(q\).
\end{lemma}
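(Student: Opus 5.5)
The plan is to argue by duality, using the pairing $\langle\cdot,\cdot\rangle$ between $L^{r'}$ and $L^r$, and to estimate every term through Hölder's inequality with respect to a nonnegative measure built from the Riesz kernel, exactly as in the proof of Lemma~\ref{lem:WNP1}. Throughout, $b=0$. I use the Hardy--Littlewood--Sobolev bound $B(F,G)\lesssim\|F\|_{L^{p_\mu}}\|G\|_{L^{p_\mu}}$ from \eqref{eq:BHLS}, with $p_\mu=2d/(d+\mu)$. Since $r=qp_\mu$, this gives
\[
B(|h_1|^q,|h_2|^q)\lesssim\|h_1\|_{L^r}^q\|h_2\|_{L^r}^q .
\]
I also use the Cauchy--Schwarz inequality \eqref{eq:BC} for $B$. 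I would first work with $f\in C_c^\infty$, or at least $f$ smooth and decaying, so that all differentiations are legitimate. The general case $f\in W^{1,r}$ would then follow by density, using the continuity of $\mathcal I_{0,\mu}$ on $L^r$ given by HLS.

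\emph{$L^{r'}$ part.} Let $g\in L^r$ and set $A=I_\mu*|f|^q$. Hölder's inequality in the measure $A\,dx$ with exponents $q/(q-1)$ and $q$ gives
\[
|\langle\Ncal_{0,\mu}(f),g\rangle|\le\Bigl(\int A|f|^q\Bigr)^{\frac{q-1}{q}}\Bigl(\int A|g|^q\Bigr)^{\frac1q}=\mathcal I_{0,\mu}(f)^{\frac{q-1}{q}}B(|f|^q,|g|^q)^{\frac1q}.
\]
By HLS, $B(|f|^q,|g|^q)^{1/q}\lesssim\|f\|_{L^r}\|g\|_{L^r}$. This yields $\|\Ncal_{0,\mu}(f)\|_{L^{r'}}\lesssim\mathcal I_{0,\mu}(f)^{(q-1)/q}\|f\|_{L^r}$.

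\emph{Gradient part.} I would write
\[
\nabla\Ncal_{0,\mu}(f)=\bigl(I_\mu*\nabla|f|^q\bigr)|f|^{q-2}f+A\,\nabla(|f|^{q-2}f).
\]
Since $q\ge2$, we have the pointwise bounds $|\nabla|f|^q|\lesssim|f|^{q-1}|\nabla f|$ and $|\nabla(|f|^{q-2}f)|\lesssim|f|^{q-2}|\nabla f|$.

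For the second term, I pair with $g$ and apply a three-factor Hölder inequality in $A\,dx$ with exponents $q/(q-2)$, $q$, $q$:
\[
\int A|f|^{q-2}|\nabla f||g|\le\mathcal I_{0,\mu}(f)^{\frac{q-2}{q}}B(|f|^q,|\nabla f|^q)^{\frac1q}B(|f|^q,|g|^q)^{\frac1q}.
\]
Each mixed factor is then handled by \eqref{eq:BC} followed by HLS, for instance $B(|f|^q,|\nabla f|^q)\le\mathcal I_{0,\mu}(f)^{1/2}B(|\nabla f|^q,|\nabla f|^q)^{1/2}\lesssim\mathcal I_{0,\mu}(f)^{1/2}\|\nabla f\|_{L^r}^q$, and similarly for $g$. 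Collecting powers of $\mathcal I_{0,\mu}(f)$ gives $(q-2)/q+1/(2q)+1/(2q)=(q-1)/q$. The total is therefore $\lesssim\mathcal I_{0,\mu}(f)^{(q-1)/q}\|\nabla f\|_{L^r}\|g\|_{L^r}$.

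The first term is the one I expect to be the main obstacle, since after pairing it becomes $B\bigl(|f|^{q-1}|\nabla f|,|f|^{q-1}|g|\bigr)$. A direct Cauchy--Schwarz argument only produces $\mathcal I_{0,\mu}(f)^{1/2}$, which is the wrong power. The remedy I would use is to factor
\[
|f|^{q-1}|\nabla f|=(|f|^q)^{\frac{q-1}{q}}(|\nabla f|^q)^{\frac1q},\qquad |f|^{q-1}|g|=(|f|^q)^{\frac{q-1}{q}}(|g|^q)^{\frac1q}.
\]
I then apply Hölder on $\R^d\times\R^d$ with respect to the positive measure $I_\mu(x-y)\,dx\,dy$, with exponents $q/(q-1)$ and $q$. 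This gives
\[
B\bigl(|f|^{q-1}|\nabla f|,|f|^{q-1}|g|\bigr)\le\mathcal I_{0,\mu}(f)^{\frac{q-1}{q}}B(|\nabla f|^q,|g|^q)^{\frac1q}\lesssim\mathcal I_{0,\mu}(f)^{\frac{q-1}{q}}\|\nabla f\|_{L^r}\|g\|_{L^r},
\]
where the last step is HLS.

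Taking the supremum over $\|g\|_{L^r}\le1$ and combining the two parts proves \eqref{eq:Bil1}. The remaining technical point is to justify the chain rule for $|f|^{q-2}f$ and $|f|^q$ when $q\ge2$, together with passing $\nabla$ through the convolution. For this I would use smooth approximations and monotone convergence, in the same spirit as the truncation argument in the proof of \eqref{eq:BC}.
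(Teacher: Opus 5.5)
Your proof is correct and follows essentially the paper's argument: duality against $L^r$, H\"older's inequality with respect to the Riesz-kernel measure (your weighted H\"older steps in the measure $(I_\mu*|f|^q)\,dx$ are exactly what the paper writes as the product-measure inequality \eqref{eq:BBH}), the factorization $|f|^{q-1}|\nabla f|=(|f|^q)^{(q-1)/q}(|\nabla f|^q)^{1/q}$ for the first gradient term, and Cauchy--Schwarz \eqref{eq:BBC} followed by HLS for the mixed factors in the second term. The one detail to state explicitly is that for $q=2$ the factor with exponent $q/(q-2)$ in your three-factor H\"older step is simply absent, just as the paper omits the zero-weight factor.
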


\begin{proof} We give the proof for \(f\in C_0^\infty\), the general case follows by the
standard density argument in the Sobolev spaces. Let \(p_\mu\) and \(B\) be as introduced in
Lemma~\ref{lem:BRFI}. Since
\(
 p_\mu =r/q,
\)
Hence all the applications of \(B\)
below are covered by
Lemma~\ref{lem:BRFI}. We first estimate the \(L^{r'}\)-norm. Let
\(\varphi\in C_c^\infty(\R^d)\), and set
\(
 A:=|f|^q,
\)
and
\(
 G:=|\varphi|^q.
\)
Using the definition of \(\mathcal N_{0,\mu}\) in \eqref{eq:Nonl}, we obtain
\begin{align}
 \left|
 \int_{\R^d}
 \mathcal N_{0,\mu}(f)\overline{\varphi}\,dx
 \right|
 \le
 B\left(
  A,
  A^{\frac{q-1}{q}}G^{\frac1q}
 \right)
 =
 B\left(
  A^{\frac{q-1}{q}}A^{\frac1q},
  A^{\frac{q-1}{q}}G^{\frac1q}
 \right)
 \le
 B(A,A)^{\frac{q-1}{q}}
 B(A,G)^{\frac1q}.
\end{align}
The last inequality is indeed
\eqref{eq:BBH}, applied with weights
\(
 (q-1)/q
 \)
and
\(
1/q.
\)
By the Hardy-Littlewood-Sobolev estimate
\eqref{eq:BHLS} we earn
\begin{align}
 B(A,G)^{1/q}
 &\lesssim
 \|A\|_{L^{p_\mu}}^{1/q}
 \|G\|_{L^{p_\mu}}^{1/q}
 =
 \|f\|_{L^r}\|\varphi\|_{L^r}.
\end{align}
Since
\(
 B(A,A)=\mathcal I_{0,\mu}(f),
\)
duality gives
\begin{equation}\label{eq:BILB}
 \|\mathcal N_{0,\mu}(f)\|_{L^{r'}}
 \lesssim
 \mathcal I_{0,\mu}(f)^{\frac{q-1}{q}}
 \|f\|_{L^r}.
\end{equation}
Furthermore, because \(q\ge2\), a direct calculations enhances to
\[
 |\nabla(|f|^q)|
 \lesssim
 |f|^{q-1}|\nabla f|,
 \qquad
 |\nabla(|f|^{q-2}f)|
 \lesssim
 |f|^{q-2}|\nabla f|.\qquad \nabla(I_\mu*A)=I_\mu*(\nabla A).
\]
As a result, one achieves
\begin{align}
 |\nabla\mathcal N_{0,\mu}(f)|
 \lesssim{}&
 \bigl(
  I_\mu*(|f|^{q-1}|\nabla f|)
 \bigr)|f|^{q-1}
 +
 \bigl(I_\mu*|f|^q\bigr)
 |f|^{q-2}|\nabla f|.
 \label{eq:BGPD}
\end{align}
We now estimate these two terms separately by \(L^{r'}\)-duality. Set
\(
 D:=|\nabla f|^q.
\)
Then \(D\in L^{p_\mu}(\R^d)\) and
\(
 \|D\|_{L^{p_\mu}}
 =
 \|\nabla f\|_{L^r}^q.
\)
For the first term in
\eqref{eq:BGPD}, let
\(\varphi\in C_c^\infty(\R^d)\) and using the
positivity of the Riesz kernel, we obtain
\begin{align}\label{eq:Bi1}
 \left|
 \int_{\R^d}
 \bigl(I_\mu*(|f|^{q-1}|\nabla f|)\bigr)
 |f|^{q-1}\overline{\varphi}\,dx
 \right|
 \le
 \int_{\R^d}
 \bigl(I_\mu*(|f|^{q-1}|\nabla f|)\bigr)
 |f|^{q-1}|\varphi|\,dx
 =\notag\\
 B\bigl(
 |f|^{q-1}|\nabla f|,
 |f|^{q-1}|\varphi|
 \bigr)
 = B\left(
  A^{\frac{q-1}{q}}D^{\frac1q},
  A^{\frac{q-1}{q}}G^{\frac1q}\right)
\end{align}
By \eqref{eq:BBH}, one readily gets
\begin{align}\label{eq:Bi2}
 &B\left(
  A^{\frac{q-1}{q}}D^{\frac1q},
  A^{\frac{q-1}{q}}G^{\frac1q}
 \right)
 \le
 B(A,A)^{\frac{q-1}{q}}
 B(D,G)^{\frac1q}.
\end{align}
The bound \eqref{eq:BHLS} gives
\begin{align}\label{eq:Bi3}
 B(D,G)^{1/q}
 &\lesssim
 \|D\|_{L^{p_\mu}}^{1/q}
 \|G\|_{L^{p_\mu}}^{1/q}
 =
 \|\nabla f\|_{L^r}\|\varphi\|_{L^r}.
\end{align}
From \eqref{eq:Bi1}, \eqref{eq:Bi2} and \eqref{eq:Bi3}, we obtain
\begin{align}\label{eq:Bi1A}
 &\left|
 \int_{\R^d}
 \bigl(I_\mu*(|f|^{q-1}|\nabla f|)\bigr)
 |f|^{q-1}\overline{\varphi}\,dx
 \right|
 \lesssim
 \mathcal I_{0,\mu}(f)^{\frac{q-1}{q}}
 \|\nabla f\|_{L^r}
 \|\varphi\|_{L^r}.
\end{align}
For the second term in \eqref{eq:BGPD}, we similarly obtain
\begin{align}\label{eq:Bi4}
 \left|
 \int_{\R^d}
 \bigl(I_\mu*|f|^q\bigr)
 |f|^{q-2}|\nabla f|\overline{\varphi}\,dx
 \right|
 \le
 B\bigl(
 |f|^{q},
 |f|^{q-2}|\nabla f| |\varphi|
 \bigr)
 = B\left(
  A,
  A^{\frac{q-2}{q}}
  D^{\frac1q}
  G^{\frac1q}
 \right).
\end{align}
For \(q>2\), write
\(
 A
 =
 A^{\frac{q-2}{q}}
 A^{\frac1q}
 A^{\frac1q}.
\)
When \(q=2\), the zero-weight factor is simply omitted, in accordance
with the convention in
Lemma~\ref{lem:BRFI}. Thus
\eqref{eq:BBH} gives
\begin{align}
 &B\left(
  A,
  A^{\frac{q-2}{q}}
  D^{\frac1q}
  G^{\frac1q}
 \right)
 \le
 B(A,A)^{\frac{q-2}{q}}
 B(A,D)^{\frac1q}
 B(A,G)^{\frac1q}.
\end{align}
By the inequality
\eqref{eq:BBC}, one infers
\begin{align}
 B(A,D)^{1/q}
 \le
 B(A,A)^{\frac1{2q}}
 B(D,D)^{\frac1{2q}},
\end{align}
and
\begin{align}
 B(A,G)^{1/q}
 \le
 B(A,A)^{\frac1{2q}}
 B(G,G)^{\frac1{2q}}.
\end{align}
Moreover, \eqref{eq:BHLS} enhances to
\begin{align}\label{eq:Bi8}
 B(D,D)^{\frac1{2q}}
 &\lesssim
 \|D\|_{L^{p_\mu}}^{1/q}
 =
 \|\nabla f\|_{L^r},
 \\
 B(G,G)^{\frac1{2q}}
 &\lesssim
 \|G\|_{L^{p_\mu}}^{1/q}
 =
 \|\varphi\|_{L^r}.
\end{align}
From \eqref{eq:Bi4}--\eqref{eq:Bi8} and the identity
\[
 \frac{q-2}{q}
 +
 \frac1{2q}
 +
 \frac1{2q}
 =
 \frac{q-1}{q},
\]
we arrive at
\begin{align}\label{eq:Bi2A}
 & \left|
 \int_{\R^d}
 \bigl(I_\mu*|f|^q\bigr)
 |f|^{q-2}|\nabla f|\overline{\varphi}\,dx
 \right|
 \lesssim
 \mathcal I_{0,\mu}(f)^{\frac{q-1}{q}}
 \|\nabla f\|_{L^r}
 \|\varphi\|_{L^r}.
\end{align} 
The inequalities \eqref{eq:Bi1A} \eqref{eq:Bi2A} and a duality argument enhance to
\begin{equation}\label{eq:BIGB}
 \|\nabla\mathcal N_{0,\mu}(f)\|_{L^{r'}}
 \lesssim
 \mathcal I_{0,\mu}(f)^{\frac{q-1}{q}}
 \|\nabla f\|_{L^r}.
\end{equation}
From the inequalities
\eqref{eq:BILB} and
\eqref{eq:BIGB}, we obtain
\[
 \|\mathcal N_{0,\mu}(f)\|_{W^{1,r'}}
 \lesssim
 \mathcal I_{0,\mu}(f)^{\frac{q-1}{q}}
 \|f\|_{W^{1,r}}.
\]
that is \eqref{eq:Bil1}
\end{proof}
The following homogeneous estimate will be used below.
\begin{lemma}
Assume \(\mu, q, r\) as in \eqref{eq:assA1} and \eqref{eq:assA2}. Let \(t\in\mathbb R\setminus\{0\}\), and suppose
that
\(
 u\in L^r(\mathbb R^d),
 \)
 \(
 J(t)u\in L^r(\mathbb R^d).
\)
Then
\begin{equation}\label{eq:BJNE}
 \|J(t)\mathcal N_{0,\mu}(u)\|_{L^{r'}}
 \lesssim
 \mathcal I_{0,\mu}(u)^{\frac{q-1}{q}}
 \|J(t)u\|_{L^r}.
\end{equation}
The implicit constant depends only on \(d\), \(\mu\), and \(q\), and is
independent of \(t\) and \(u\).
\end{lemma}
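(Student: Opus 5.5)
The plan is to use the factorization of the Galilean vector field through the modulation
\[
 M(t):=e^{\frac{i|x|^2}{4t}},\qquad J(t)=M(t)\,(2it\nabla)\,M(t)^{-1},
\]
which follows from \(\nabla(M^{-1}u)=M^{-1}(\nabla u-\tfrac{ix}{2t}u)\). The point is that \(\mathcal N_{0,\mu}\) is gauge covariant: since \(|M(t)|=1\),
\[
 \mathcal N_{0,\mu}(M(t)w)=M(t)\,\mathcal N_{0,\mu}(w),\qquad \mathcal I_{0,\mu}(M(t)w)=\mathcal I_{0,\mu}(w).
\]
Setting \(w:=M(t)^{-1}u\), we get \(|w|=|u|\) and \(\|\nabla w\|_{L^r}=(2|t|)^{-1}\|J(t)u\|_{L^r}\). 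Hence
\[
 J(t)\mathcal N_{0,\mu}(u)=2it\,M(t)\,\nabla\mathcal N_{0,\mu}(w),
\]
and it suffices to prove
\[
 \|\nabla\mathcal N_{0,\mu}(w)\|_{L^{r'}}\lesssim \mathcal I_{0,\mu}(w)^{\frac{q-1}{q}}\|\nabla w\|_{L^r}.
\]
This is exactly the gradient bound \eqref{eq:BIGB} from the proof of Lemma~\ref{lem:BIG}. Multiplying by \(2|t|\) and using \(\mathcal I_{0,\mu}(w)=\mathcal I_{0,\mu}(u)\) gives \eqref{eq:BJNE}, with a constant independent of \(t\).

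The key steps, in order, are as follows.

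First, verify the conjugation identity and gauge covariance for smooth compactly supported \(u\). This is a direct pointwise computation.

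Second, note that \eqref{eq:BIGB} was proved using only the pointwise bound \eqref{eq:BGPD}, together with \eqref{eq:BBH}, \eqref{eq:BBC} and \eqref{eq:BHLS}. It therefore holds with only \(\|\nabla w\|_{L^r}\) on the right, not the full \(W^{1,r}\) norm. I would restate it as a homogeneous bound valid whenever \(w\in L^r\) and \(\nabla w\in L^r\). In that case \(|w|^q\) and \(|\nabla w|^q\) lie in \(L^{p_\mu}\), so all the quantities \(B(\cdot,\cdot)\) that appear are finite.

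Third, pass from smooth \(u\) to general \(u\) with \(u,J(t)u\in L^r\). Since \(t\neq0\) is fixed, this hypothesis is equivalent to \(w\in W^{1,r}\). Approximate \(w\) in \(W^{1,r}\) by \(w_n\in C_c^\infty\), so that \(u_n:=M(t)w_n\) satisfy \(u_n\to u\) and \(J(t)u_n\to J(t)u\) in \(L^r\). Then \(\mathcal I_{0,\mu}(u_n)\to\mathcal I_{0,\mu}(u)\) by \eqref{eq:BHLS}, because \(|u_n|^q\to|u|^q\) in \(L^{p_\mu}\). Next, \(\mathcal N_{0,\mu}(u_n)\to\mathcal N_{0,\mu}(u)\) in \(L^{r'}\), by the multilinear estimate of \eqref{eq:BILB} type. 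Finally, \(J(t)\mathcal N_{0,\mu}(u_n)\) is bounded in \(L^{r'}\), so by weak lower semicontinuity, identifying the distributional limit, the inequality passes to \(u\).

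I expect the main obstacle to be the last step. It requires justifying that \(\mathcal N_{0,\mu}(w)\) has a distributional gradient in \(L^{r'}\) satisfying \eqref{eq:BGPD} when \(w\) is only in \(W^{1,r}\). There are two delicate points: \(|w|^{q-2}w\) for \(q\) near \(2\), and the differentiation of the convolution \(I_\mu*|w|^q\). I would handle both by the approximation just described, using convergence of the bilinear forms through \eqref{eq:BHLS}. The differentiation of the convolution is legitimate because \(\nabla|w|^q\in L^{p_\mu}\), with the same exponent bookkeeping as in the proof of \eqref{eq:BIGB}.
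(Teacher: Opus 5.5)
Your proposal is correct and is essentially the paper's proof. It uses the same factorization $J(t)=2it\,M_t\nabla M_t^{-1}$ with $M_t=e^{i|x|^2/(4t)}$, the gauge covariance of $\mathcal N_{0,\mu}$ and invariance of $\mathcal I_{0,\mu}$ under $v=M_t^{-1}u$, and the reduction to the gradient bound \eqref{eq:BIGB} for $v\in W^{1,r}$ with $2|t|\,\|\nabla v\|_{L^r}=\|J(t)u\|_{L^r}$. Your extra approximation step, via weak compactness in $L^{r'}$ and lower semicontinuity, is sound and makes explicit the density argument that the paper leaves to the proof of Lemma~\ref{lem:BIG}.
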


\begin{proof}
Fix \(t>0\), non restrictively, and let \(M_t\) denote multiplication by the phase
\[
 M_t(x):=e^{i|x|^2/(4t)}.
\]
Since
\[
 M_t^{-1}(x)=e^{-i|x|^2/(4t)},
\]
we have
\[
 \nabla M_t^{-1}(x)
 =
 -\frac{i x}{2t}M_t^{-1}(x).
\]
Therefore, for every sufficiently regular function \(w\), we achieve
\begin{align*}
 M_t\nabla(M_t^{-1}w)
 &=
 M_t\left(
  (\nabla M_t^{-1})w+M_t^{-1}\nabla w
 \right)
 =
 M_t\left(
  -\frac{i x}{2t}M_t^{-1}w+M_t^{-1}\nabla w
 \right)
 =
 -\frac{i x}{2t}w+\nabla w.
\end{align*}
Multiplying by \(2it\), we obtain
\begin{align}
 2it\,M_t\nabla(M_t^{-1}w)
 &=
 2it\left(-\frac{i x}{2t}w+\nabla w\right)
 =
 xw+2it\nabla w
 =
 J(t)w,
\end{align}
and thus
\begin{equation}\label{eq:BJMF}
 J(t)=2it\,M_t\nabla M_t^{-1}.
\end{equation}
Set
\(
 v:=M_t^{-1}u.
\)
Since \(|M_t|=1\), multiplication by \(M_t\) and \(M_t^{-1}\) is an
isometry on every \(L^p\)-space. In particular,
\begin{equation}\label{eq:BVUL}
 \|v\|_{L^r}=\|u\|_{L^r}.
\end{equation}
Applying \eqref{eq:BJMF} to \(u\), we get
\begin{equation}\label{eq:BVGF}
 \nabla v
 =
 \frac{1}{2it}M_t^{-1}J(t)u.
\end{equation}
Because \(J(t)u\in L^r\), formula
\eqref{eq:BVGF} shows that
\(
 \nabla v\in L^r(\mathbb R^d).
\)
Together with \eqref{eq:BVUL}, this proves that
\(
 v\in W^{1,r}(\mathbb R^d).
\)
Moreover, by
\eqref{eq:BVGF}, we obtain the identity
\begin{equation}\label{eq:BMGJ}
 2t\|\nabla v\|_{L^r}
 =
 \|J(t)u\|_{L^r}.
\end{equation}
Next, since \(|v|=|u|\) pointwise,
\begin{align}
 \mathcal I_{0,\mu}(v)
 =
 \iint_{\mathbb R^d\times\mathbb R^d}
 I_\mu(x-y)|v(x)|^q|v(y)|^q\,dx\,dy
 \notag\\
 =
 \iint_{\mathbb R^d\times\mathbb R^d}
 I_\mu(x-y)|u(x)|^q|u(y)|^q\,dx\,dy
 =
 \mathcal I_{0,\mu}(u).
 \label{eq:BMEI}
\end{align}
Moreover, because \(u=M_tv\) and \(|u|=|v|\), we have
\begin{align}
 M_t^{-1}\mathcal N_{0,\mu}(u)
 =
 M_t^{-1}\Bigl[
  (I_\mu*|u|^q)|u|^{q-2}u
 \Bigr]
\notag\\ =
 M_t^{-1}\Bigl[
  (I_\mu*|v|^q)|v|^{q-2}M_tv
 \Bigr]
 =
 (I_\mu*|v|^q)|v|^{q-2}v
 =
 \mathcal N_{0,\mu}(v).
 \label{eq:BMGC}
\end{align}
We now apply 
\eqref{eq:BJMF} to \(\mathcal N_{0,\mu}(u)\). Using
\eqref{eq:BMGC}, we obtain
\begin{align}
 J(t)\mathcal N_{0,\mu}(u)
 &=
 2it\,M_t\nabla
 \left(M_t^{-1}\mathcal N_{0,\mu}(u)\right)
=
 2it\,M_t\nabla\mathcal N_{0,\mu}(v).
 \label{eq:BJMI}
\end{align}
Since \(v\in W^{1,r}\), applying the estimate
\eqref{eq:BIGB}, established in
Lemma~\ref{lem:BIG}
gives
\begin{equation}\label{eq:BGLATV}
 \|\nabla\mathcal N_{0,\mu}(v)\|_{L^{r'}}
 \lesssim
 \mathcal I_{0,\mu}(v)^{\frac{q-1}{q}}
 \|\nabla v\|_{L^r}.
\end{equation}
Multiplication by \(M_t\) is an isometry on \(L^{r'}\). Therefore, from
\eqref{eq:BJMI} and
\eqref{eq:BGLATV},
\begin{align}
 \|J(t)\mathcal N_{0,\mu}(u)\|_{L^{r'}}
 &=
 2t\|\nabla\mathcal N_{0,\mu}(v)\|_{L^{r'}}
\lesssim
 2t\,
 \mathcal I_{0,\mu}(v)^{\frac{q-1}{q}}
 \|\nabla v\|_{L^r}.
\end{align}
Finally, using the identities
\eqref{eq:BMGJ} and
\eqref{eq:BMEI}, we conclude that
\begin{align*}
 \|J(t)\mathcal N_{0,\mu}(u)\|_{L^{r'}}
 &\lesssim
 \mathcal I_{0,\mu}(u)^{\frac{q-1}{q}}
 \bigl(2t\|\nabla v\|_{L^r}\bigr)
=
 \mathcal I_{0,\mu}(u)^{\frac{q-1}{q}}
 \|J(t)u\|_{L^r}.
\end{align*}
This proves \eqref{eq:BJNE}.
\end{proof}

\subsection{The operator \(J(t)\) and \(\Sigma\)-scattering in the
homogeneous case}

In this section we restrict to the homogeneous case \(b=0\). We show
that, in the range \(q>q_0\), \(H^1\)-scattering can be upgraded
to \(\Sigma\)-scattering through an argument based on the operator
\(J(t)\) defined in \eqref{eq.GVect}.

Let \(\nu\) be defined by \eqref{eq:IBN}. We use the
Schr\"odinger-admissible pair \((a,r)\) determined by
\begin{equation}\label{eq:BJAP}
 r:=\frac{2dq}{d+\mu},
 \qquad
 \frac2a+\frac dr=\frac d2.
\end{equation}
The choice of \(r\) is equivalent to the Hardy--Littlewood--Sobolev
relation
\[
 \frac1{r'}
 =
 \frac{2q-1}{r}-\frac{\mu}{d}.
\]
The admissibility relation in
\eqref{eq:BJAP} gives
\[
 \frac2a
 =
 \frac d2-\frac dr
 =
 \frac{d(q-1)-\mu}{2q}
 =
 \frac{\nu}{2q},
\]
and hence
\begin{equation}\label{eq:b0Ja}
 a=\frac{4q}{\nu}.
\end{equation}

We also define
\begin{equation}\label{eq:kap}
 \frac1\kappa
 :=
 \frac1{a'}-\frac1a
 =
 1-\frac2a.
\end{equation}
Using \eqref{eq:b0Ja}, we obtain
\begin{equation}\label{eq:BJKF}
 \kappa
 =
 \frac{a}{a-2}
 =
 \frac{2q}{2q-\nu}.
\end{equation}

Under the mass-subcritical assumption
\[
 q<1+\frac{\mu+2}{d},
\]
one has
\[
 0<\nu<2.
\]
Indeed, \(q\ge2\) and \(0<\mu<d\) give
\[
 \nu=d(q-1)-\mu\ge d-\mu>0,
\]
whereas the upper bound on \(q\) gives \(\nu<2\).
Consequently,
\[
 a>2,
 \qquad
 1<\kappa<\infty.
\]
Recall that \(q_0=1+X_0\), where \(X_0>0\) is the positive root of
the polynomial \(P_0\) defined in
\eqref{eq:IBSP}. Setting \(X=q-1\) and using
\eqref{eq:IBN}, we compute
\begin{align*}
 \nu(2q-1)-2q
 &=
 \bigl(d(q-1)-\mu\bigr)(2q-1)-2q
 =
 2dX^2+(d-2\mu-2)X-(\mu+2)
 =
 P_0(X).
\end{align*}
Since \(q\ge2\), one has \(X=q-1>0\). Moreover,
\(P_0(0)=-(\mu+2)<0\), and \(X_0\) is the unique positive root of
\(P_0\). Therefore \(q>q_0\) is equivalent to
\begin{equation}\label{eq:BJQE}
 \nu(2q-1)>2q.
\end{equation}

The next lemma contains the time-integrability property responsible for
the strict condition \(q>q_0\).

\begin{lemma}
\label{lem:BJCI}
Assume \eqref{eq:ass2} and \eqref{eq:frf2}. Let
\(\phi_0\in\Sigma\), and let
\(
 \phi\in C\bigl(\mathbb R;H^1(\mathbb R^d)\bigr)
\)
be the corresponding global solution of
\eqref{eq:introCb0} with \(\phi(0)=\phi_0\). Then, for every \(T>0\),
\begin{equation}\label{eq:BJINT}
 \mathcal I_{0,\mu}(\phi(\cdot))^{\frac{q-1}{q}}
 \in
 L^\kappa((T,\infty)).
\end{equation}
Moreover,
\begin{equation}\label{eq:nDec}
 \lim_{T\to+\infty}
 \left\|
  \mathcal I_{0,\mu}(\phi(\cdot))^{\frac{q-1}{q}}
 \right\|_{L^\kappa((T,\infty))}
 =0.
\end{equation}
\end{lemma}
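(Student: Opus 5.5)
The plan is to combine the decay estimate \eqref{eq:BJID} of the nonlocal potential with an elementary power-integrability computation. The hypotheses \eqref{eq:ass2} and \eqref{eq:frf2} are contained in those of Proposition~\ref{prop:BPCP1} with \(b=0\): we have \(q_{\mathrm{sr}}=1+\frac{1+\mu}{d}<q_0<q<q_{\mathrm m}\) and \(2b=0<d+\mu\). Hence the corollary giving \eqref{eq:BJID} applies, and for all \(t\ge0\)
\[
 \mathcal I_{0,\mu}(\phi(t))^{\frac{q-1}{q}}
 \lesssim
 (1+t)^{-\frac{\nu(q-1)}{q}}.
\]
Measurability in \(t\) follows from \(\phi\in C(\R;H^1)\) and the continuity of \(\mathcal I_{0,\mu}\) on \(H^1\), which is a consequence of Hardy--Littlewood--Sobolev (\eqref{eq:BHLS}) and Sobolev embedding into \(L^r\), \(r=2dq/(d+\mu)\). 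This embedding is legitimate in the present mass-subcritical range \(q<q_{\mathrm m}\le q_{\mathrm e}\).

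Next I would raise this bound to the power \(\kappa\) and integrate over \((T,\infty)\):
\[
 \int_T^\infty \mathcal I_{0,\mu}(\phi(t))^{\frac{(q-1)\kappa}{q}}\,dt
 \lesssim
 \int_T^\infty (1+t)^{-\frac{\nu(q-1)\kappa}{q}}\,dt .
\]
The integral converges if and only if \(\nu(q-1)\kappa/q>1\). By \eqref{eq:BJKF}, \(\kappa=2q/(2q-\nu)\), with \(2q-\nu>0\) because \(\nu<2\le q\). The condition therefore reads \(2\nu(q-1)>2q-\nu\), that is \(\nu(2q-1)>2q\). This is exactly \eqref{eq:BJQE}, which is equivalent to \(q>q_0\). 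This proves \eqref{eq:BJINT}.

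For \eqref{eq:nDec}, the explicit tail gives
\[
 \bigl\|\mathcal I_{0,\mu}(\phi)^{\frac{q-1}{q}}\bigr\|_{L^\kappa((T,\infty))}
 \lesssim
 (1+T)^{-\frac{\nu(q-1)}{q}+\frac1\kappa},
\]
and the exponent is negative by the same inequality, so the norm tends to \(0\) as \(T\to\infty\). Alternatively, one can use dominated convergence on the tail of a convergent integral.

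There is no real obstacle here. The only points needing care are the algebraic identification of the integrability exponent with \(P_0(q-1)>0\), already computed before the lemma, and checking that the hypotheses of the earlier corollary are satisfied. In particular, one must confirm that \(q_0>q_{\mathrm{sr}}\), which holds since \(P_0((\mu+1)/d)<0\) as stated in the introduction.
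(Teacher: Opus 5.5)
Your proposal is correct and follows the paper's proof: both combine the decay bound \eqref{eq:BJID} with the identity $\nu\frac{q-1}{q}\kappa=\frac{2\nu(q-1)}{2q-\nu}$, and this exponent exceeds $1$ exactly when $\nu(2q-1)>2q$, i.e.\ when $q>q_0$. The differences are only cosmetic. For the vanishing tail, the paper invokes absolute continuity of the Lebesgue integral, whereas you also give the explicit rate $(1+T)^{\frac1\kappa-\frac{\nu(q-1)}{q}}$. You also spell out two points the paper leaves implicit: the hypothesis check $q_0>q_{\mathrm{sr}}$ via $P_0((\mu+1)/d)=-1<0$, and measurability in $t$.
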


\begin{proof}
By \eqref{eq:BJID},
\begin{equation}
 \left[
 \mathcal I_{0,\mu}(\phi(t))^{\frac{q-1}{q}}
 \right]^\kappa
 \lesssim
 (1+t)^{-\nu\frac{q-1}{q}\kappa}.
\end{equation}
Using \eqref{eq:BJKF}, we have
\[
 \frac{\nu(q-1)}{q}\kappa
 =
 \frac{2\nu(q-1)}{2q-\nu}.
\]
Moreover,
\[
 \nu\frac{q-1}{q}\kappa
 =
 \nu\frac{q-1}{q}\frac{2q}{2q-\nu}
 =
 \frac{2\nu(q-1)}{2q-\nu}>1.
\]
By \eqref{eq:BJQE}, the last inequality is equivalent
to \(q>q_0\), which holds by \eqref{eq:frf2}. Thus
\(
 \mathcal I_{0,\mu}(\phi(\cdot))^{\frac{q-1}{q}}
 \in
 L^\kappa((T,\infty))
\)
for every \(T>0\). Finally, since
\(
 \mathcal I_{0,\mu}(\phi(\cdot))^{
  \kappa\frac{q-1}{q}}
 \in L^1((T,\infty)),
\)
the absolute continuity of the Lebesgue integral gives \eqref{eq:nDec}.
\end{proof}

We record the Strichartz estimates and the basic identities for
\(J(t)\) that will be used below. Recall that \(J(t)\) is defined in
\eqref{eq.GVect}. A direct computation gives
\[
 [i\partial_t,J(t)]=-2\nabla,
 \qquad
 [\Delta,J(t)]=2\nabla,
\]
and therefore
\begin{equation}\label{eq:BJC}
 [i\partial_t+\Delta,J(t)]=0.
\end{equation}
Equivalently, for every \(s,t\in\mathbb R\), one has
\begin{equation}\label{eq:BJI}
 J(t)e^{i(t-s)\Delta}
 =
 e^{i(t-s)\Delta}J(s).
\end{equation}
Taking \(s=0\) in \eqref{eq:BJI}, we obtain
\begin{equation}\label{eq:BJII}
 e^{-it\Delta}J(t)
 =
 xe^{-it\Delta}.
\end{equation}
Let \((a,r)\) be the Schr\"odinger-admissible pair defined in
\eqref{eq:BJAP}. For every interval
\(I\subset\mathbb R\), every \(t_0\in I\), and every solution of
\[
 (i\partial_t+\Delta)u=F,
 \qquad
 u(t_0)=u_0,
\]
the standard Strichartz estimate gives
\begin{equation}\label{eq:BJSS}
 \|u\|_{L^\infty_tL^2_x(I)}
 +
 \|u\|_{L^a_tL^r_x(I)}
 \lesssim
 \|u_0\|_{L^2}
 +
 \|F\|_{L^{a'}_tL^{r'}_x(I)}.
\end{equation}
We shall also use the corresponding dual estimate
\begin{equation}\label{eq:BJDS}
 \left\|
  \int_I e^{-is\Delta}F(s)\,ds
 \right\|_{L^2}
 \lesssim
 \|F\|_{L^{a'}_tL^{r'}_x(I)}.
\end{equation}
The following lemma justifies the application of
\eqref{eq:BJSS}.

\begin{lemma}
\label{lem:BJLS1}
Assume \eqref{eq:ass2} and
\(
 q<q_{\mathrm m},
\)
with \(q_{\mathrm m}\) defined as in \eqref{eq:BMa}. Let \(\phi_0\in\Sigma\), and let
\(
 \phi\in C\bigl([0,S];H^1(\mathbb R^d)\bigr)
\)
be the corresponding solution of \eqref{eq:introCb0} with \(\phi(0)=\phi_0\), where
\(S>0\). Then
\[
 J\phi
 \in
 C\bigl([0,S];L^2(\mathbb R^d)\bigr)
 \cap
 L^a\bigl((0,S);L^r(\mathbb R^d)\bigr),
\]
and
\[
 J\mathcal N_{0,\mu}(\phi)
 \in
 L^{a'}\bigl((0,S);L^{r'}(\mathbb R^d)\bigr).
\]
For every \(0\le s\le t\le S\), we have
\begin{equation}\label{eq:BJCD}
 J(t)\phi(t)
 =
 e^{i(t-s)\Delta}J(s)\phi(s)
 -
 i\int_s^t
 e^{i(t-\tau)\Delta}
 J(\tau)\mathcal N_{0,\mu}(\phi(\tau))\,d\tau.
\end{equation}
\end{lemma}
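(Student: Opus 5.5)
The plan is to reduce everything to the Duhamel formula for $\phi$ itself, combined with the commutation identity \eqref{eq:BJI}. We first work with smooth, decaying data, for which all manipulations are legitimate, and then pass to general $\phi_0\in\Sigma$ by approximation.

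\emph{Step 1: $\phi$ lies in the Strichartz space.} Since $\phi\in C([0,S];H^1)$ and $(a,r)$ is admissible with $2\le r<2d/(d-2)$ (this is where $q<q_{\mathrm m}$ enters), Sobolev embedding gives $\phi\in L^\infty((0,S);L^r)\subset L^a((0,S);L^r)$. By \eqref{eq:BILB}, together with the uniform-in-time bound $\mathcal I_{0,\mu}(\phi(t))\le 2qE(\phi_0)$, we get $\mathcal N_{0,\mu}(\phi)\in L^{a'}L^{r'}$. The same holds for $\nabla\mathcal N_{0,\mu}(\phi)$ via \eqref{eq:BIGB} and $\nabla\phi\in L^aL^r$.

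\emph{Step 2: regularized data.} Take $\phi_{0,n}\in\mathcal S$ with $\phi_{0,n}\to\phi_0$ in $\Sigma$, and let $\phi_n$ be the corresponding solutions. By \eqref{eq:IMP} and Proposition~\ref{prop:FMP} these lie in $C([0,S];\Sigma)$, with $\phi_n\to\phi$ in $C([0,S];H^1)$ by $H^1$ continuous dependence. For such solutions, $x\phi_n$ and $t\nabla\phi_n$ are in $L^2$, so $J\phi_n\in C([0,S];L^2)$. We then apply $J(t)$ to the Duhamel formula
\[
\phi_n(t)=e^{i(t-s)\Delta}\phi_n(s)-i\int_s^t e^{i(t-\tau)\Delta}\mathcal N_{0,\mu}(\phi_n(\tau))\,d\tau .
\]
Using \eqref{eq:BJI} in the form $J(t)e^{i(t-\tau)\Delta}=e^{i(t-\tau)\Delta}J(\tau)$ yields \eqref{eq:BJCD} for $\phi_n$. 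To justify moving $J(t)$ inside the integral, we use a weak formulation tested against Schwartz functions, pairing with $J(t)^*$.

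\emph{Step 3: a priori bounds.} From \eqref{eq:BJNE}, Hölder in time, and the bound $\mathcal I_{0,\mu}(\phi_n(t))\lesssim 1$ uniformly in $n$ and $t$, we obtain
\[
\|J\mathcal N_{0,\mu}(\phi_n)\|_{L^{a'}L^{r'}(I)}\lesssim |I|^{1/\kappa}\|J\phi_n\|_{L^aL^r(I)} .
\]
Combining this with the Strichartz estimate \eqref{eq:BJSS} on short intervals $I$ gives $\|J\phi_n\|_{L^\infty L^2\cap L^aL^r(I)}\lesssim\|J(s)\phi_n(s)\|_{L^2}$. Absorbing the small factor $|I|^{1/\kappa}$ and iterating over finitely many intervals covering $[0,S]$ produces bounds uniform in $n$, since $\|J(0)\phi_{0,n}\|_{L^2}=\|x\phi_{0,n}\|_{L^2}$ is bounded.

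\emph{Step 4: passing to the limit.} We expect this to be the main obstacle. Uniform bounds alone give weak-$*$ limits. To identify the limit as $J\phi$, we use $\phi_n\to\phi$ in $C([0,S];L^2)$ and the fact that $J\phi_n\to J\phi$ in distributions. To get convergence of the Duhamel term, we first try a difference estimate. The nonlinearity is only Hölder-type when $q<3$, so instead we argue by weak convergence. The nonlinear terms $J\mathcal N_{0,\mu}(\phi_n)$ are bounded in $L^{a'}L^{r'}$ and converge in distributions to $J\mathcal N_{0,\mu}(\phi)$, because $\mathcal N_{0,\mu}(\phi_n)\to\mathcal N_{0,\mu}(\phi)$ in $L^{a'}L^{r'}$ by continuity. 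Duhamel operators are weakly continuous from $L^{a'}L^{r'}$ to $L^2$, so each term of \eqref{eq:BJCD} passes to the limit weakly in $L^2$ for every fixed $t$.

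Finally, strong continuity $J\phi\in C([0,S];L^2)$ follows from \eqref{eq:BJCD}: the free term is continuous, and the inhomogeneous Strichartz estimate \eqref{eq:BJDS} gives continuity of the Duhamel integral for an $L^{a'}L^{r'}$ forcing term.
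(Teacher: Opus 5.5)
Your proposal is essentially the paper's proof. The shared steps are: smooth approximation, the commuted Duhamel formula obtained from \eqref{eq:BJI}, Strichartz combined with \eqref{eq:BJNE} and H\"older in time ($1/a'=1/\kappa+1/a$) on short intervals where $|I|^{1/\kappa}$ times the uniform energy bound gives smallness, iteration with constants uniform in $n$, and a limit passage. You spell out that limit passage (weak compactness in $L^{a'}L^{r'}$ plus weak continuity of the Duhamel operator) in more detail than the paper does.

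There are two small differences. The paper reads off $J\phi\in C([0,S];L^2)$ directly from Proposition~\ref{prop:FMP} rather than from the Duhamel formula. It also explicitly takes the approximants to be regular enough for the commuted identity to hold classically, which is what makes the absorption step legitimate. You should state this too, since absorbing requires $\|J\phi_n\|_{L^aL^r(I)}<\infty$ beforehand, and $J\phi_n\in C([0,S];L^2)$ alone does not supply that.
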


\begin{proof}
All estimates involving \(J(t)\) are understood componentwise. By Proposition~\ref{prop:FMP}, employed on \([0,S]\), we have
\(
 \phi\in C\bigl([0,S];\Sigma\bigr).
\)
As a result, one aquires
\begin{equation}
 J(t)\phi(t)
 =
 x\phi(t)+2it\nabla\phi(t)
 \in C\bigl([0,S];L^2(\mathbb R^d;\mathbb C^d)\bigr).
\end{equation}
In particular, \(J(t)\phi(t)\in L^2\) for every \(t\in[0,S]\). Choose
\[
 A_\phi(t)
 :=
 \mathcal I_{0,\mu}(\phi(t))^{\frac{q-1}{q}}.
\]
By \eqref{eq:BHLS}, we observe that
\[
 \mathcal I_{0,\mu}(\phi(t))
 \lesssim
 \|\phi(t)\|_{L^r}^{2q},
\]
and therefore
\[
 A_\phi(t)
 \lesssim
 \|\phi(t)\|_{L^r}^{2(q-1)}.
\]
The assumption
\(
 q<q_{\mathrm{m}}
\)
ensures that \(r\) lies in the \(H^1\)-Sobolev range, for \(d\geq 1\). Since
\(
 \phi\in C\bigl([0,S];H^1\bigr),
\)
we obtain
\(
%\begin{equation}\label{eq:BJMS}
 M_S
 :=
 \|A_\phi\|_{L^\infty(0,S)}
 <\infty.
%\end{equation}
\)
First, choose the initial datum is smooth and decays sufficiently
fast. Then the corresponding solution \(\phi\) is regular enough on
\([0,S]\) for the commutator identity \eqref{eq:BJC} and the commuted
Duhamel formula to be justified in the classical sense. Let \(C_{\mathrm{Str}}>0\) and \(C_{\mathrm{nl}}>0\) be constants for
which \eqref{eq:BJSS} and \eqref{eq:BJNE} hold, and set
\(
 C_*:=C_{\mathrm{Str}}C_{\mathrm{nl}}.
\)
Since \(1<\kappa<\infty\), choose \(\delta>0\) such that
\[
 C_*M_S\delta^{1/\kappa}\le\frac12.
\]
We seletct a finite partition
\(
 0=t_0<t_1<\cdots<t_N=S
\)
such that
\(
 t_{m+1}-t_m\le\delta,
\, m=0,\ldots,N-1, \, N\in \mathbb{N}.
\)
Writing
\(
 I_m:=[t_m,t_{m+1}],
\)
we then have
\begin{equation}\label{eq:BJLS2}
 C_*
 \|A_\phi\|_{L^\kappa(I_m)}
 \le\frac12
\end{equation}
for every \(m=0,\ldots,N-1\).
For a regular solution, the commutator identity
\eqref{eq:BJC} gives
\begin{equation}\label{eq:BJD1}
 (i\partial_t+\Delta)J(t)\phi
 =
 J(t)\mathcal N_{0,\mu}(\phi),
\end{equation}
and hence, on \(I_m\), one attains
\[
 J(t)\phi(t)
 =
 e^{i(t-t_m)\Delta}J(t_m)\phi(t_m)
 -
 i\int_{t_m}^t
 e^{i(t-\tau)\Delta}
 J(\tau)\mathcal N_{0,\mu}(\phi(\tau))\,d\tau.
\]
Applying \eqref{eq:BJSS}, we obtain
\begin{align}
 X_m
 &:=
 \|J\phi\|_{L^\infty_tL^2_x(I_m)}
 +
 \|J\phi\|_{L^a_tL^r_x(I_m)}
 \notag\\
 &\le
 C_{\mathrm{Str}}
 \|J(t_m)\phi(t_m)\|_{L^2}
 +
 C_{\mathrm{Str}}
 \|J\mathcal N_{0,\mu}(\phi)\|_
 {L^{a'}_tL^{r'}_x(I_m)}.
 \label{eq:BJLI1}
\end{align}
By \eqref{eq:BJNE}, one achieves
\[
 \|J(t)\mathcal N_{0,\mu}(\phi(t))\|_{L^{r'}}
 \le
 C_{\mathrm{nl}}
 A_\phi(t)\|J(t)\phi(t)\|_{L^r}
\]
for almost every \(t\in I_m\). Considering that
\begin{equation}\label{eq:Hpair}
\frac1{a'}
 =
 \frac1\kappa+\frac1a
\end{equation}
obtained by \eqref{eq:kap}, H\"older's inequality gives
\begin{equation}\label{eq:BJLI2}
 \|J\mathcal N_{0,\mu}(\phi)\|_
 {L^{a'}_tL^{r'}_x(I_m)}
 \le
 C_{\mathrm{nl}}
 \|A_\phi\|_{L^\kappa(I_m)}
 \|J\phi\|_{L^a_tL^r_x(I_m)}.
\end{equation}
From \eqref{eq:BJLI1} and \eqref{eq:BJLI2}, one gets
\[
 X_m
 \le
 C_{\mathrm{Str}}
 \|J(t_m)\phi(t_m)\|_{L^2}
 +
 C_*
 \|A_\phi\|_{L^\kappa(I_m)}X_m.
\]
By \eqref{eq:BJLS2} we arrive at
\begin{equation}\label{eq:BJLIB}
 X_m
 \le
 2C_{\mathrm{Str}}
 \|J(t_m)\phi(t_m)\|_{L^2}.
\end{equation}
As \(J(t_m)\phi(t_m)\in L^2\) for every \(m=0,\ldots,N-1\),
\eqref{eq:BJLIB} yields
\(
 J\phi\in L^a\bigl(I_m;L^r\bigr)
\)
for each \(m\). Summation over the partition therefore gives
\(
 J\phi\in L^a\bigl((0,S);L^r\bigr).
\)
Applying \eqref{eq:BJLI2} on each \(I_m\) and summing once more, we obtain
\(
 J\mathcal N_{0,\mu}(\phi)
 \in
 L^{a'}\bigl((0,S);L^{r'}\bigr).
\)
The Duhamel identity \eqref{eq:BJD1} on an arbitrary interval
\([s,t]\subset[0,S]\) guarantees \eqref{eq:BJCD} for the smooth solution. The additional smoothness assumption can now be removed by
approximation. Choose smooth initial data
\(\phi_{0,n}\to\phi_0\) in \(\Sigma\), and let \(\phi_n\) denote the
corresponding solutions. By the convergence in \(\Sigma\), together with the Hardy-Littlewood-Sobolev and Sobolev
inequalities, we acheve
\[
 \sup_n E(\phi_{0,n})<\infty.
\]
Energy conservation \eqref{eq:CE2}
therefore leads to
\[
 \sup_n
 \|A_{\phi_n}\|_{L^\infty(0,S)}
 <\infty,
\]
and therefore the same partition satisfying \eqref{eq:BJLS2} may be used for
every \(n\). Since \(J(0)=x\) and
\(\phi_{0,n}\to\phi_0\), as \(n\to \infty\) in \(\Sigma\),
\[
 \sup_n\|J(0)\phi_{0,n}\|_{L^2}
 =
 \sup_n\|x\phi_{0,n}\|_{L^2}
 <\infty.
\]
Iterating \eqref{eq:BJLIB} on the intervals \(I_m\), together with
\eqref{eq:BJLI2}, thus gives
\[
 \sup_n\left(
 \|J\phi_n\|_{L^a((0,S);L^r)}
 +
 \|J\mathcal N_{0,\mu}(\phi_n)\|_{L^{a'}((0,S);L^{r'})}
 \right)
 <\infty.
\]
Passing to the limit by weak compactness and continuous dependence on
the initial data produce
\(
 J\phi\in L^a((0,S);L^r),
 \,
 J\mathcal N_{0,\mu}(\phi)
 \in L^{a'}((0,S);L^{r'}),
\)
together with the Duhamel formula \eqref{eq:BJCD}. This is the
standard smooth-data approximation argument, see
\cite{Cazenave2003} and \cite[Lemma~A.1]{Dinh2021} for analogous
Strichartz estimates involving the Galilean vector field.
This proves the lemma.
\end{proof}
The preceding estimates, together with Lemma~\ref{lem:BJCI}, lead to
the following proposition.

\begin{proposition}\label{prop:BJDT}
Assume \eqref{eq:ass2} and \eqref{eq:frf2}. Let
\(\phi_0\in\Sigma\), and let
\(
 \phi\in C\bigl(\mathbb R;H^1(\mathbb R^d)\bigr)
\)
be the solution of
\eqref{eq:introCb0} with \(\phi(0)=\phi_0\).
Then there exists \(T_0>0\) such that, for every \(T\ge T_0\),
\[
 J\phi
 \in
 L^a\bigl((T,\infty);L^r(\mathbb R^d)\bigr),
\]
and
\[
 J\mathcal N_{0,\mu}(\phi)
 \in
 L^{a'}\bigl((T,\infty);L^{r'}(\mathbb R^d)\bigr).
\]
Moreover, there exists
\(
 \eta_+\in L^2(\mathbb R^d;\mathbb C^d)
\)
such that
\begin{equation}\label{eq:BJPML}
 \lim_{t\to+\infty}
 \left\|
  x e^{-it\Delta}\phi(t)-\eta_+
 \right\|_{L^2(\mathbb R^d;\mathbb C^d)}
 =0.
\end{equation}
\end{proposition}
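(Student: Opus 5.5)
Using Lemma~\ref{lem:BJCI}, choose $T_0>0$ such that
\[
 C_*\bigl\|\mathcal I_{0,\mu}(\phi(\cdot))^{\frac{q-1}{q}}\bigr\|_{L^\kappa((T_0,\infty))}\le\tfrac12,
\]
where $C_*=C_{\mathrm{Str}}C_{\mathrm{nl}}$ is the constant from the proof of Lemma~\ref{lem:BJLS1}. Fix $T\ge T_0$ and any $S>T$. On $[T,S]$, Lemma~\ref{lem:BJLS1} gives $J\phi\in L^a((T,S);L^r)$ and $J\mathcal N_{0,\mu}(\phi)\in L^{a'}((T,S);L^{r'})$, together with the Duhamel formula \eqref{eq:BJCD} with $s=T$. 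We apply the Strichartz estimate \eqref{eq:BJSS} to it, then the nonlinear bound \eqref{eq:BJNE} combined with H\"older's inequality in time through \eqref{eq:Hpair}, exactly as in \eqref{eq:BJLI2}. With $X_{T,S}:=\|J\phi\|_{L^\infty_tL^2_x(T,S)}+\|J\phi\|_{L^a_tL^r_x(T,S)}$ this yields
\[
 X_{T,S}\le C_{\mathrm{Str}}\|J(T)\phi(T)\|_{L^2}+\tfrac12 X_{T,S}.
\]
Since $X_{T,S}<\infty$, we can absorb the last term, which gives $X_{T,S}\le 2C_{\mathrm{Str}}\|J(T)\phi(T)\|_{L^2}$. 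This bound is finite because $\phi\in C(\R;\Sigma)$, and it is independent of $S$. Letting $S\to\infty$ by monotone convergence gives $J\phi\in L^a((T,\infty);L^r)$. Then \eqref{eq:BJLI2} on $(T,\infty)$ gives $J\mathcal N_{0,\mu}(\phi)\in L^{a'}((T,\infty);L^{r'})$. Both quantities are finite for every $T\ge T_0$, since enlarging $T$ only shrinks the interval.

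For the limit we use \eqref{eq:BJII}: $xe^{-it\Delta}\phi(t)=e^{-it\Delta}J(t)\phi(t)$. Applying $e^{-it\Delta}$ to \eqref{eq:BJCD} with $s=T_0$ and using \eqref{eq:BJI} gives, for $t\ge T_0$,
\[
 xe^{-it\Delta}\phi(t)=e^{-iT_0\Delta}J(T_0)\phi(T_0)-i\int_{T_0}^t e^{-i\tau\Delta}J(\tau)\mathcal N_{0,\mu}(\phi(\tau))\,d\tau.
\]
For $T_0\le t_1<t_2$, the dual Strichartz estimate \eqref{eq:BJDS} on $I=(t_1,t_2)$ gives
\[
 \bigl\|xe^{-it_2\Delta}\phi(t_2)-xe^{-it_1\Delta}\phi(t_1)\bigr\|_{L^2}\lesssim\|J\mathcal N_{0,\mu}(\phi)\|_{L^{a'}((t_1,\infty);L^{r'})}.
\]
This tends to $0$ as $t_1\to\infty$ because $a'<\infty$ and $J\mathcal N_{0,\mu}(\phi)\in L^{a'}((T_0,\infty);L^{r'})$. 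Alternatively, one can bound it by $C_{\mathrm{nl}}\|A_\phi\|_{L^\kappa(t_1,\infty)}\|J\phi\|_{L^a L^r}$ and invoke \eqref{eq:nDec}. So $xe^{-it\Delta}\phi(t)$ is Cauchy in $L^2(\R^d;\C^d)$ as $t\to\infty$. Its limit $\eta_+$ is given explicitly by the formula above with $t=\infty$, and this proves \eqref{eq:BJPML}.

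The main obstacle is justifying the absorption step uniformly on the infinite interval. The bootstrap must be run on finite intervals $[T,S]$, where finiteness of $X_{T,S}$ is supplied by Lemma~\ref{lem:BJLS1}. The smallness constant must come from the tail of $\mathcal I_{0,\mu}(\phi)^{(q-1)/q}$ in $L^\kappa$, not from a small time step. This is exactly where $q>q_0$, through Lemma~\ref{lem:BJCI}, is essential. A lesser point is justifying the use of \eqref{eq:BJDS} componentwise for $\C^d$-valued functions, which is routine.
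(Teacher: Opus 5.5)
Your proposal is correct and follows the paper's proof essentially step by step. You get smallness of $\|\mathcal I_{0,\mu}(\phi)^{(q-1)/q}\|_{L^\kappa((T_0,\infty))}$ from \eqref{eq:nDec}, run the absorption argument on finite intervals $[T,S]$ using Lemma~\ref{lem:BJLS1}, Strichartz, \eqref{eq:BJNE} and H\"older via \eqref{eq:Hpair}, obtain an $S$-independent bound, and conclude with the Cauchy criterion for $xe^{-it\Delta}\phi(t)=e^{-it\Delta}J(t)\phi(t)$ through \eqref{eq:BJDS}; the only differences (keeping the $L^\infty_tL^2_x$ component in $X_{T,S}$, and writing Duhamel from $T_0$ rather than from $t_1$) are cosmetic.
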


\begin{proof}
All estimates involving \(J(t)\) are understood componentwise. Set
\(
 A_\phi(t)
 :=
 \mathcal I_{0,\mu}(\phi(t))^{\frac{q-1}{q}}.
\)
Let \(C_{\mathrm{Str}}>0\) and \(C_{\mathrm{nl}}>0\) be
constants for which \eqref{eq:BJSS} and \eqref{eq:BJNE} hold, and select
\(
 C_*:=C_{\mathrm{Str}}C_{\mathrm{nl}}.
\)
By \eqref{eq:nDec}, there exists \(T_0>0\) such that
\begin{equation}\label{eq:BJTS1}
 C_*
 \|A_\phi\|_{L^\kappa((T_0,\infty))}
 \le\frac12.
\end{equation}
Fix \(T\ge T_0\) and \(S>T\). By
Lemma~\ref{lem:BJLS1}, applied on \([0,S]\), one has
\[
 J\phi
 \in
 C\bigl([0,S];L^2\bigr)
 \cap
 L^a\bigl((0,S);L^r\bigr),
\qquad
 J\mathcal N_{0,\mu}(\phi)
 \in
 L^{a'}\bigl((0,S);L^{r'}\bigr),
\]
and the Duhamel formula \eqref{eq:BJCD} is valid. Since \(T\in[0,S]\)
we have
\(
 J(T)\phi(T)\in L^2(\R^d;\C^d).
\)
Applying \eqref{eq:BJSS} to \eqref{eq:BJCD} on \([T,S]\), and
retaining the \(L^a_tL^r_x\)-component, we obtain
\begin{align}
 \|J\phi\|_{L^a_tL^r_x([T,S])}
 &\le
 C_{\mathrm{Str}}
 \|J(T)\phi(T)\|_{L^2}
 +
 C_{\mathrm{Str}}
 \|J\mathcal N_{0,\mu}(\phi)\|_
 {L^{a'}_tL^{r'}_x([T,S])}.
 \label{eq:BJTS2}
\end{align}
By \eqref{eq:BJNE}, we get
\[
 \|J(t)\mathcal N_{0,\mu}(\phi(t))\|_{L^{r'}}
 \le
 C_{\mathrm{nl}}
 A_\phi(t)\|J(t)\phi(t)\|_{L^r}
\]
for almost every \(t\in[T,S]\). Having in mind \eqref{eq:Hpair}, H\"older's inequality provides
\begin{align}
 \|J\mathcal N_{0,\mu}(\phi)\|_
 {L^{a'}_tL^{r'}_x([T,S])}
 &\le
 C_{\mathrm{nl}}
 \|A_\phi\|_{L^\kappa((T,S))}
 \|J\phi\|_{L^a_tL^r_x([T,S])}.
 \label{eq:BJTH}
\end{align}
Combining \eqref{eq:BJTS2} and \eqref{eq:BJTH}, we obtain
\begin{align}\label{eq:BJTU0}
 \|J\phi\|_{L^a_tL^r_x([T,S])}
 &\le
 C_{\mathrm{Str}}
 \|J(T)\phi(T)\|_{L^2}
 +
 C_*
 \|A_\phi\|_{L^\kappa((T,S))}
 \|J\phi\|_{L^a_tL^r_x([T,S])}.
\end{align}
Because \(T\ge T_0\), we notice that
\[
 \|A_\phi\|_{L^\kappa((T,S))}
 \le
 \|A_\phi\|_{L^\kappa((T_0,\infty))}.
\]
Thus \eqref{eq:BJTS1} allows us to absorb the last term on the right hand side of \eqref{eq:BJTU0}, and we acquire
\begin{equation}\label{eq:BJTU}
 \|J\phi\|_{L^a_tL^r_x([T,S])}
 \le
 2C_{\mathrm{Str}}
 \|J(T)\phi(T)\|_{L^2},
\end{equation}
with a constant independent of \(S\). Letting \(S\to+\infty\) in \eqref{eq:BJTU} we obtain
\(
 J\phi
 \in
 L^a\bigl((T,\infty);L^r\bigr).
\)
By making a use of \eqref{eq:BJNE} and H\"older's inequality, we then find
\[
 \|J\mathcal N_{0,\mu}(\phi)\|_
 {L^{a'}((T,\infty);L^{r'})}
 \le
 C_{\mathrm{nl}}
 \|A_\phi\|_{L^\kappa((T,\infty))}
 \|J\phi\|_{L^a((T,\infty);L^r)}
 <\infty.
\]
It remains to prove the convergence of
\(xe^{-it\Delta}\phi(t)\) in \(L^2\). Let
\(
 T\le t_1<t_2.
\)
Lemma~\ref{lem:BJLS1}, applied on \([0,t_2]\), gives
\eqref{eq:BJCD} with \(s=t_1\) and \(t=t_2\), that is
\[
 J(t_2)\phi(t_2)
 =
 e^{i(t_2-t_1)\Delta}J(t_1)\phi(t_1)
 -
 i\int_{t_1}^{t_2}
 e^{i(t_2-s)\Delta}
 J(s)\mathcal N_{0,\mu}(\phi(s))\,ds.
\]
Applying \(e^{-it_2\Delta}\) to the above identity, we obtain
\[
 e^{-it_2\Delta}J(t_2)\phi(t_2)
 -
 e^{-it_1\Delta}J(t_1)\phi(t_1)
 =
 -i\int_{t_1}^{t_2}
 e^{-is\Delta}
 J(s)\mathcal N_{0,\mu}(\phi(s))\,ds.
\]
The dual Strichartz estimate \eqref{eq:BJDS} therefore gives
\begin{align}
 \left\|
  e^{-it_2\Delta}J(t_2)\phi(t_2)
  -
  e^{-it_1\Delta}J(t_1)\phi(t_1)
 \right\|_{L^2}
 \notag\\
 \lesssim
 \|J\mathcal N_{0,\mu}(\phi)\|_
 {L^{a'}((t_1,t_2);L^{r'})}
 \le
 \|J\mathcal N_{0,\mu}(\phi)\|_
 {L^{a'}((t_1,\infty);L^{r'})}.
 \label{eq:BJICB}
\end{align}
As
\(
 J\mathcal N_{0,\mu}(\phi)
 \in L^{a'}\bigl((T,\infty);L^{r'}\bigr),
\)
the right-hand side of \eqref{eq:BJICB} tends to zero as
\(t_1\to+\infty\), uniformly for \(t_2\ge t_1\). Using
\eqref{eq:BJII}, we have
\[
 \left\|
 xe^{-it_2\Delta}\phi(t_2)
 -
 xe^{-it_1\Delta}\phi(t_1)
 \right\|_{L^2(\R^d;\C^d)}
 \longrightarrow0
\]
as \(t_1\to+\infty\), uniformly for \(t_2\ge t_1\).
By \eqref{eq:BJII} and the completeness of
\(L^2(\R^d;\C^d)\), there exists
\(\eta_+\in L^2(\R^d;\C^d)\) such that
\(
 xe^{-it\Delta}\phi(t)
 \longrightarrow
 \eta_+
\)
in \(L^2(\R^d;\C^d)\) as \(t_1\to+\infty.\) This proves \eqref{eq:BJPML}.
\end{proof}
We finally have the following:
\begin{proof}[Proof of Theorem~\ref{thm:IMSB}]
Since
\(
 q>q_0>q_{\mathrm sr}
\)
with \(q_{\mathrm sr}\) as in \eqref{eq:mass1} and, by \eqref{eq:frf2},
\(
 q<q_{\mathrm m},
\)
with  \(q_{\mathrm m}\) as in \eqref{eq:BMa}, the assumptions \eqref{eq:ass} and \eqref{eq:frf} of
Theorem~\ref{thm:MH1} are satisfied with \(b=0\). Hence there exist
\(\phi_\pm\in H^1(\mathbb R^d)\) such that
\begin{equation}\label{eq:scattfin}
 \lim_{t\to\pm\infty}
 \left\|
  e^{-it\Delta}\phi(t)-\phi_\pm
 \right\|_{H^1}
 =0.
\end{equation}
We first consider \(t\to+\infty\). By
Proposition~\ref{prop:BJDT}, there exists
\(
 \eta_+\in L^2(\mathbb R^d;\mathbb C^d)
\)
such that
\begin{equation}\label{eq:IMMP}
 \lim_{t\to+\infty}
 \left\|
  xe^{-it\Delta}\phi(t)-\eta_+
 \right\|_{L^2(\mathbb R^d;\mathbb C^d)}
 =0.
\end{equation}
Set
\(
 v(t):=e^{-it\Delta}\phi(t).
\)
By \eqref{eq:BJII}, one obtains
\[
 xv(t)=e^{-it\Delta}J(t)\phi(t)
 \in L^2(\mathbb R^d;\mathbb C^d),
\]
so \(v(t)\) belongs to the domain 
\[
 X:D(X)\subset L^2(\mathbb R^d)
 \longrightarrow L^2(\mathbb R^d;\mathbb C^d),
 \]
of the multiplication operator \(
 Xf:=xf,
 \)
where
\[
 D(X):=
 \left\{
  f\in L^2(\mathbb R^d):
  xf\in L^2(\mathbb R^d;\mathbb C^d)
 \right\}.
\]
It is straightforward to see that the operator \(X\) is closed. Let \(t_n\to+\infty\). From \eqref{eq:scattfin} and \eqref{eq:IMMP}, we have
\(
 v(t_n)\longrightarrow\phi_+
 \)
 in
 \(
 L^2,
\)
and
\(
 Xv(t_n)\longrightarrow\eta_+
 \)
 in
 \(
 L^2(\mathbb R^d;\mathbb C^d).
\)
This fact then provides
\(
 \phi_+\in D(X),
 \,
  X\phi_+=\eta_+.
\)
Thus
\(
 x\phi_+=\eta_+
 \in L^2(\mathbb R^d;\mathbb C^d),
\)
and, since \(\phi_+\in H^1(\mathbb R^d)\), we conclude that
\(
 \phi_+\in\Sigma.
\)
Moreover, \eqref{eq:IMMP} becomes
\[
 \lim_{t\to+\infty}
 \left\|
  xe^{-it\Delta}\phi(t)-x\phi_+
 \right\|_{L^2}
 =0.
\]
Combining this with the preceding \(H^1\)-convergence yields
\[
 \lim_{t\to+\infty}
 \left\|
  e^{-it\Delta}\phi(t)-\phi_+
 \right\|_{\Sigma}
 =0.
\]
For negative times, consider
\(
 \widetilde\phi(t,x):=\overline{\phi(-t,x)}.
\)
The preceding argument then bears
\(\overline{\phi_-}\in\Sigma\) and, in view of the fact that the complex conjugation preserves
the \(\Sigma\)-norm, we get
\[
 \lim_{t\to-\infty}
 \left\|
  e^{-it\Delta}\phi(t)-\phi_-
 \right\|_{\Sigma}
 =0.
\]
Together with the positive-time conclusion, this shows
\eqref{eq:IMSC}.
\end{proof}

\appendix

\section{Control of the second moment}
\label{sec:BLWS}

We use the notation of
Section~\ref{sec:BJUS}. In particular,
\(\mathcal N_{0,\mu}\), \(J(t)\), and the admissible pair \((a,r)\) are
defined in that section, see \eqref{eq.GVect} and
\eqref{eq:BJAP}. The conformal space
\(\Sigma\) is defined in \eqref{eq:introS}. We have

\begin{proposition}
\label{prop:FMP}
Assume \eqref{eq:ass}. Let \(I\subset\mathbb R\) be an interval with
\(0\in I\), and let
\(
\phi\in C\bigl(I;H^1(\mathbb R^d)\bigr)
\)
be an \(H^1\)-solution of \eqref{eq:ME} with initial data 
\(\phi_0\in\Sigma\). Then
\(
 x\phi\in C\bigl(I;L^2(\mathbb R^d)\bigr),
 \)
 and hence
\(
 \phi\in C(I;\Sigma).
\)
Moreover, for every \(s,t\in I\),
\begin{equation}\label{eq:MPG}
 \left|
  \|x\phi(t)\|_{L^2}
  -
  \|x\phi(s)\|_{L^2}
 \right|
 \le
 2\int_{\min\{s,t\}}^{\max\{s,t\}}
  \|\nabla\phi(\tau)\|_{L^2}\,d\tau.
\end{equation}
In particular, for \(s=0\) one gets
\eqref{eq:IMP}.
\end{proposition}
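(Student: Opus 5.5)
We will prove Proposition~\ref{prop:FMP} with a truncated virial computation: an estimate uniform in the truncation, followed by a limit. Fix a smooth radial cutoff $\theta\in C_c^\infty(\R^d;[0,1])$ with $\theta=1$ on $B_1$. For $\varepsilon>0$ set
\[
 \theta_\varepsilon(x):=\theta(\varepsilon x),
 \qquad
 V_\varepsilon(t):=\int_{\R^d}\theta_\varepsilon(x)^2|x|^2|\phi(t,x)|^2\,dx .
\]
Since the weight $\theta_\varepsilon^2|x|^2$ is bounded with bounded gradient, $V_\varepsilon$ is well defined for any $H^1$-solution. The weight $w_\varepsilon:=\theta_\varepsilon|x|$ satisfies $|\nabla w_\varepsilon|\lesssim1$ uniformly in $\varepsilon$, because $\varepsilon|x|\,|\nabla\theta(\varepsilon x)|$ is bounded.

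The key step is the differential identity
\[
 \frac{d}{dt}\int w_\varepsilon^2|\phi|^2\,dx
 =
 4\operatorname{Im}\int w_\varepsilon\,\overline{\phi}\,\nabla w_\varepsilon\cdot\nabla\phi\,dx .
\]
The nonlinear term does not contribute: $\overline\phi\,\mathcal N_{b,\mu}(\phi)$ is real, by the computation behind Lemma~\ref{lem:WNP1}, and it is integrable since $\mathcal I_{b,\mu}(\phi(t))<\infty$ along the $H^1$ flow. We will justify the identity in the weak form. Write \eqref{eq:ME} in $H^{-1}$ and pair it with $w_\varepsilon^2\phi\in H^1$. This works because $\mathcal N_{b,\mu}(\phi)\in H^{-1}$ under \eqref{eq:ass} via HLS and the Hardy inequality, which is exactly what the local theory provides. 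The result is an integrated identity on $[s,t]$.

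With the identity in hand, Cauchy--Schwarz and $|\nabla w_\varepsilon|\le C$ give
\[
 \Bigl|\frac{d}{dt}\|w_\varepsilon\phi\|_{L^2}^2\Bigr|\le 4\|w_\varepsilon\phi\|_{L^2}\|\nabla\phi\|_{L^2}
\]
(up to the constant $\sup|\nabla w_\varepsilon|$). To get the sharp constant $2$ in \eqref{eq:MPG}, we will choose $\theta$ radial and nonincreasing along rays so that $|\nabla w_\varepsilon|\le 1+o(1)$; alternatively we use the weight $\min(|x|,R)$ regularized, which is $1$-Lipschitz. Dividing by $2\|w_\varepsilon\phi\|$, with the usual $\sqrt{\cdot+\delta}$ regularization, yields
\[
 \bigl|\|w_\varepsilon\phi(t)\|-\|w_\varepsilon\phi(s)\|\bigr|\le 2\Bigl|\int_s^t\|\nabla\phi\|_{L^2}\,d\tau\Bigr|.
\]
Letting $\varepsilon\to0$ by monotone convergence (with $s=0$ and $\phi_0\in\Sigma$) gives $x\phi(t)\in L^2$ and the bound \eqref{eq:MPG}.

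Continuity then follows. The map $t\mapsto w_\varepsilon\phi(t)$ is continuous in $L^2$ because $w_\varepsilon$ is bounded. The convergence $w_\varepsilon\phi(t)\to x\phi(t)$ is uniform on compact subintervals of $I$: the norms $\|w_\varepsilon\phi(t)\|$ are equicontinuous in $t$ by the estimate above, and they increase to $\|x\phi(t)\|$, so Dini's theorem gives uniform convergence of the norms. The Hilbert space identity $\|x\phi-w_\varepsilon\phi\|^2\le\|x\phi\|^2-\|w_\varepsilon\phi\|^2$, valid since $0\le w_\varepsilon\le|x|$, turns this into uniform convergence of $w_\varepsilon\phi(t)\to x\phi(t)$ in $L^2$. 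The limit $x\phi$ is therefore continuous, and $\phi\in C(I;\Sigma)$.

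The main obstacle is justifying the virial identity for merely $H^1$ solutions with the singular weighted nonlocal nonlinearity. We plan to handle it through the $H^{-1}$–$H^1$ duality pairing, which is legitimate since $\phi\in C(I;H^1)\cap C^1(I;H^{-1})$. If that pairing fails, the fallback is approximation by smooth solutions together with continuous dependence.
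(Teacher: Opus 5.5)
Your proposal is correct and follows essentially the paper's argument. The paper uses the truncated moment $M_R(t)=\int\rho_R|\phi|^2$ with $\rho_R=|x|^2/(1+|x|^2/R^2)$, for which $|\nabla\rho_R|^2\le4\rho_R$ makes $\sqrt{\rho_R}$ automatically $1$-Lipschitz, and derives $|\tfrac{d}{dt}M_R^{1/2}|\le2\|\nabla\phi\|_{L^2}$. It differs from you only in minor steps: it divides using $M_R>0$ from mass conservation instead of your $\sqrt{\cdot+\delta}$, gets continuity from weak continuity plus continuity of $\|x\phi(t)\|_{L^2}$ instead of your uniform approximation, and simply invokes the ``standard virial identity'' where you supply the $H^{-1}$--$H^1$ justification.

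One correction on the weight. Since $\nabla w_\varepsilon(x)=(\nabla w_1)(\varepsilon x)$, the Lipschitz constant of $w_\varepsilon$ is independent of $\varepsilon$, and radial monotonicity of $\theta$ does not bring it near $1$ (a linear drop of $\theta$ on $[1,2]$ gives $|\nabla w_1|$ up to $2$). So for the sharp factor $2$ you must use your $1$-Lipschitz alternative $\min(|x|,R)$, or the paper's $\sqrt{\rho_R}$.
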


\begin{proof}
For \(R>0\), one sets
\[
 \rho_R(x)
 :=
 \frac{|x|^2}{1+|x|^2/R^2},
 \qquad
 M_R(t)
 :=
 \int_{\R^d}\rho_R(x)|\phi(t,x)|^2\,dx.
\]
Then, for every \(x\in\R^d\), we note that
\[
 0\le \rho_R(x)\le |x|^2,
 \qquad
 \lim_{R\to\infty}\rho_R(x)=|x|^2,
\]
with \(\rho_R(x)\) a nondecreasing function with respect to \(R\). Moreover, we have
\[
 |\nabla\rho_R(x)|^2
 \le
 4\rho_R(x).
\]
The standard virial identity shows that
\[
 M_R'(t)
 =
 2\operatorname{Im}
 \int_{\R^d}
 \nabla\rho_R\cdot
 \overline{\phi(t)}\,\nabla\phi(t)\,dx
\]
for almost every \(t\in I\). Hence
\[
 |M_R'(t)|
 \le
 4M_R(t)^{1/2}\|\nabla\phi(t)\|_{L^2}.
\]
If \(\phi_0=0\), then conservation of mass \eqref{eq:CM2} implies
\(\phi\equiv0\), and the conclusion is immediate. We may therefore
assume that \(\phi_0\neq0\). Again, by \eqref{eq:CM2},
\[
 \|\phi(t)\|_{L^2}
 =
 \|\phi_0\|_{L^2}>0,
 \qquad
 t\in I.
\]
Since \(\rho_R>0\) almost everywhere in \(\R^d\), it follows that
\(
 M_R(t)>0,
\)
for every
\(
 t\in I.
\)
Thus, on every compact subinterval of \(I\), the continuous function
\(M_R\) is bounded away from zero. The chain rule therefore gives
\[
 \left|
 \frac{d}{dt}M_R(t)^{1/2}
 \right|
 =
 \frac{|M_R'(t)|}{2M_R(t)^{1/2}}
 \le
 2\|\nabla\phi(t)\|_{L^2}
\]
for almost every \(t\in I\). Integrating between \(s,t\in I\), we
obtain
\begin{equation}\label{eq:FMP1}
 \left|
 M_R(t)^{1/2}-M_R(s)^{1/2}
 \right|
 \le
 2\int_{\min\{s,t\}}^{\max\{s,t\}}
 \|\nabla\phi(\tau)\|_{L^2}\,d\tau.
\end{equation}
Taking \(s=0\) and using \(\rho_R\le|x|^2\), we find
\[
 M_R(t)^{1/2}
 \le
 \|x\phi_0\|_{L^2}
 +
 2\int_{\min\{0,t\}}^{\max\{0,t\}}
 \|\nabla\phi(\tau)\|_{L^2}\,d\tau.
\]
The right-hand side of the above inequality is finite. Additionally, monotone convergence gives
\(
 x\phi(t)\in L^2(\R^d),
 \)
 and
 \[
 \lim_{R\to\infty}M_R(t)^{1/2}
 =
 \|x\phi(t)\|_{L^2}.
\]
Letting \(R\to\infty\) in \eqref{eq:FMP1} yields
\[
 \left|
 \|x\phi(t)\|_{L^2}
 -
 \|x\phi(s)\|_{L^2}
 \right|
 \le
 2\int_{\min\{s,t\}}^{\max\{s,t\}}
 \|\nabla\phi(\tau)\|_{L^2}\,d\tau,
\]
which proves \eqref{eq:MPG}. Finally, let
\(
G\in C_c^\infty(\R^d;\C^d),
\)
then
\[
 \left\langle x\phi(t),G\right\rangle_{L^2(\R^d;\C^d)}
 =
 \left\langle \phi(t),x\cdot G\right\rangle_{L^2(\R^d)}.
\]
Since \(x\cdot G\in L^2(\R^d)\) and
\(\phi\in C(I;L^2(\R^d))\), the function on the right-hand side of the above identity is continuous in
\(t\). By density and the local boundedness ensured
by \eqref{eq:MPG}, it follows that
\(t\mapsto x\phi(t)\) is weakly continuous in \(L^2\).
Together with the fact that 
\(\|x\phi(t)\|_{L^2}\) is continuous in \(t\), this bears
\(
 x\phi\in C\bigl(I;L^2(\R^d)\bigr),
\)
and hence \(\phi\in C(I;\Sigma)\).

\end{proof}
\section{Bilinear estimates for the Riesz kernel}
\label{sec:RIS}

We begin with an auxiliary lemma for the Riesz kernel used in the
scattering arguments. That is,

\begin{lemma}
\label{lem:BRFI}
Let
\(
 0<\mu<d,
 \)
 \(
 I_\mu(x)
 \)
 defined as in \eqref{eq:Nonl} and
 \begin{equation}\label{eq:mu}
  p_\mu:=\frac{2d} {d+\mu}.
\end{equation}
For nonnegative functions \(F,G\in L^{p_\mu}(\R^d)\), define
\begin{equation}
 B(F,G)
 :=
 \iint_{\R^d\times\R^d}
 I_\mu(x-y)F(x)G(y)\,dx\,dy.
\end{equation}
Then \(B(F,G)\) is finite and symmetric, and
\begin{equation}\label{eq:BHLS}
 B(F,G)
 \le
 C_{d,\mu}
 \|F\|_{L^{p_\mu}}
 \|G\|_{L^{p_\mu}}.
\end{equation}
Moreover, the following inequalities hold.

\begin{enumerate}
\item[\rm(i)]
Let \(N\in\N\), let
\[
 F_j,G_j\in L^{p_\mu}(\R^d),
 \qquad
 F_j,G_j\ge0,
\]
and let \(\theta_j\ge0\) satisfy
\[
 \sum_{j=1}^N\theta_j=1.
\]
Writing
\[
 \mathcal J
 :=
 \{j\in\{1,\dots,N\}:\theta_j>0\},
\]
one has
\begin{equation}\label{eq:BBH}
 B\left(
  \prod_{j\in\mathcal J}F_j^{\theta_j},
  \prod_{j\in\mathcal J}G_j^{\theta_j}
 \right)
 \le
 \prod_{j\in\mathcal J}
 B(F_j,G_j)^{\theta_j}.
\end{equation}

\item[\rm(ii)]
For all nonnegative \(F,G\in L^{p_\mu}(\R^d)\),
\begin{equation}\label{eq:BBC}
 B(F,G)^2
 \le
 B(F,F)B(G,G).
\end{equation}
\end{enumerate}
\end{lemma}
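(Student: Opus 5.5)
The plan is to prove the HLS bound first, then derive (ii) from positive-definiteness of the Riesz kernel, and finally obtain (i) from the same Fourier representation. Finiteness and \eqref{eq:BHLS} come directly from the Hardy--Littlewood--Sobolev inequality. Since \(I_\mu(x)=|x|^{-(d-\mu)}\) with \(0<d-\mu<d\), HLS applies with exponents \(p=s=p_\mu\), because \(\frac{2}{p_\mu}+\frac{d-\mu}{d}=\frac{d+\mu}{d}+\frac{d-\mu}{d}=2\). Symmetry follows from Tonelli and \(I_\mu(x-y)=I_\mu(y-x)\).

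For (ii), I would use the positive-definiteness of the Riesz kernel. Its Fourier transform is \(\widehat{I_\mu}(\xi)=c_{d,\mu}|\xi|^{-\mu}\) with \(c_{d,\mu}>0\). Hence, first for \(F,G\in C_c^\infty\) and then by density in \(L^{p_\mu}\) using \eqref{eq:BHLS}, we have
\[
B(F,G)=c\int|\xi|^{-\mu}\widehat F(\xi)\overline{\widehat G(\xi)}\,d\xi
=c\,\langle I_{\mu/2}*F,\,I_{\mu/2}*G\rangle_{L^2},
\]
up to constants. Here \(I_{\mu/2}*F\in L^2\) by HLS, since \(\frac1{p_\mu}-\frac{\mu}{2d}=\frac12\). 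Equivalently, one can use the semigroup identity \(I_{\mu/2}*I_{\mu/2}=c\,I_\mu\) directly. So \(B\) is a positive semidefinite inner product on the real cone, and Cauchy--Schwarz gives \eqref{eq:BBC}. In fact, for nonnegative \(F\), \(I_{\mu/2}*F\) is a nonnegative function, so
\[
B(F,G)=c\int (I_{\mu/2}*F)(z)\,(I_{\mu/2}*G)(z)\,dz,
\]
and this representation feeds into (i).

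For (i), I would write \(B(F,G)=c\int U_F(z)U_G(z)\,dz\) with \(U_F:=I_{\mu/2}*F\ge0\). The key pointwise step is Hölder's inequality with respect to the positive measure \(I_{\mu/2}(z-x)\,dx\):
\[
U_{\prod F_j^{\theta_j}}(z)=\int I_{\mu/2}(z-x)\prod_{j\in\mathcal J}F_j(x)^{\theta_j}\,dx\le\prod_{j\in\mathcal J}U_{F_j}(z)^{\theta_j},
\]
since \(\sum_{j\in\mathcal J}\theta_j=1\). The same holds for the \(G_j\). Multiplying and integrating in \(z\), then applying Hölder again with exponents \(1/\theta_j\) to \(\prod_j (U_{F_j}U_{G_j})^{\theta_j}\), yields \(\prod_j\bigl(\int U_{F_j}U_{G_j}\bigr)^{\theta_j}\). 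This is \eqref{eq:BBH}. Note that \(\prod F_j^{\theta_j}\in L^{p_\mu}\) by Hölder, so the left side is defined.

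The main obstacle is rigorously justifying the factorization \(I_\mu=c\,I_{\mu/2}*I_{\mu/2}\). This is the classical Riesz composition formula, valid because \(\mu/2+\mu/2=\mu<d\). I would simply cite it (Stein, or Lieb--Loss Thm.~5.9) with an explicit positive constant, and use Tonelli to interchange integrals for nonnegative functions, so no density argument is needed for (i) or (ii). The convention for \(\theta_j=0\) is handled by restricting to \(\mathcal J\) throughout.
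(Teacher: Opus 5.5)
Your proposal is correct, but it proves (i) by a different and heavier route than the paper.

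The paper proves \eqref{eq:BBH} in one step. It regards $d\nu(x,y)=I_\mu(x-y)\,dx\,dy$ as a $\sigma$-finite positive measure on $\R^d\times\R^d$. It then notes that the integrand is $\prod_{j\in\mathcal J}\bigl(F_j(x)G_j(y)\bigr)^{\theta_j}$ and applies the generalized H\"older inequality with exponents $1/\theta_j$ on that product space. This uses nothing about the kernel except nonnegativity, so the Riesz composition formula is not needed for (i). Your version is valid: pointwise H\"older for $I_{\mu/2}*\bigl(\prod_j F_j^{\theta_j}\bigr)$, the same for the $G_j$, then H\"older again in $z$. But it spends three H\"older applications plus the factorization $I_\mu=c\,I_{\mu/2}*I_{\mu/2}$ to obtain what is a single H\"older inequality on $\R^d\times\R^d$. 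It also only works for kernels admitting such a nonnegative factorization.

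For (ii), the paper computes $B(F,G)=c_{d,\mu}\int|\xi|^{-\mu}\widehat F\,\overline{\widehat G}\,d\xi$ for nonnegative $F,G\in C_c^\infty$ and applies Cauchy--Schwarz with weight $|\xi|^{-\mu}$. It then passes to general nonnegative $F,G\in L^{p_\mu}$ by density, using the continuity given by \eqref{eq:BHLS}. Your real-space representation $B(F,G)=c\int (I_{\mu/2}*F)(I_{\mu/2}*G)\,dz$ is arguably cleaner at this point. Here $I_{\mu/2}*F\in L^2$ by HLS, since $\frac1{p_\mu}-\frac{\mu}{2d}=\frac12$. For nonnegative functions the representation follows from Tonelli alone. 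It avoids the distributional Fourier transform of $I_\mu$ and the density step, and it exhibits $B(F,F)$ directly as $c\,\|I_{\mu/2}*F\|_{L^2}^2$. The price is citing the composition formula, with its explicit positive constant, valid because $\mu/2+\mu/2<d$, in place of the Fourier transform of the Riesz kernel.
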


\begin{proof}
Because \(0<\mu<d\), one has
\(
 1<p_\mu<2,
\)
and also
\[
 \frac{2}{p_\mu}+\frac{d-\mu}{d}=2.
\]
The Hardy-Littlewood-Sobolev inequality therefore leads to
\begin{equation}
 \iint_{\R^d\times\R^d}
 \frac{F(x)G(y)}{|x-y|^{d-\mu}}
 \,dx\,dy
 \le
 C_{d,\mu}
 \|F\|_{L^{p_\mu}}
 \|G\|_{L^{p_\mu}}.
\end{equation}
For this reason \(B(F,G)\) is finite and
\eqref{eq:BHLS} holds. Symmetry follows from
\(
 I_\mu(x-y)=I_\mu(y-x).
\)
We first prove \eqref{eq:BBH}. Define the positive
measure
\[
 d\nu(x,y)
 :=
 I_\mu(x-y)\,dx\,dy
\]
on \(\R^d\times\R^d\). This measure is \(\sigma\)-finite. in fact, if
\(B_n:=\{x\in\R^d:|x|\le n\}\), then
\begin{align*}
 \nu(B_n\times B_n)
 &=
 \int_{B_n}\int_{B_n}
 |x-y|^{-(d-\mu)}
 \,dy\,dx
\le
 |B_n|
 \int_{B_{2n}}
 |z|^{-(d-\mu)}
 \,dz
 <\infty.
\end{align*}
For \(j\in\mathcal J\), set
\(
 s_j:=1/\theta_j,
\)
so we achieve
\[
 1\le s_j<\infty,
 \qquad
 \sum_{j\in\mathcal J}\frac1{s_j}
 =
 \sum_{j\in\mathcal J}\theta_j
 =1.
\]
Generalized H\"older's inequality on
\((\R^d\times\R^d,\nu)\) gives
\begin{align*}
 &B\left(
  \prod_{j\in\mathcal J}F_j^{\theta_j},
  \prod_{j\in\mathcal J}G_j^{\theta_j}
 \right)
=
 \iint_{\R^d\times\R^d}
 \prod_{j\in\mathcal J}
 \bigl(F_j(x)G_j(y)\bigr)^{\theta_j}
 \,d\nu(x,y)
 \\
 &\quad\le
 \prod_{j\in\mathcal J}
 \left(
  \iint_{\R^d\times\R^d}
  F_j(x)G_j(y)\,d\nu(x,y)
 \right)^{\theta_j}
 =
 \prod_{j\in\mathcal J}
 B(F_j,G_j)^{\theta_j},
\end{align*}
and this shows \eqref{eq:BBH}. The indices satisfying
\(\theta_j=0\) are omitted to avoid any ambiguity involving zero
powers. We next prove \eqref{eq:BBC}. Suppose first that
\(
 F,G\in C_c^\infty(\R^d),
 \,
 F,G\ge0.
\)
Their Fourier transforms satisfy
\[
 \widehat F(-\xi)=\overline{\widehat F(\xi)},
 \qquad
 \widehat G(-\xi)=\overline{\widehat G(\xi)}.
\]
Also we recall that
\begin{equation}\label{eq:BRFT}
 \widehat{I_\mu}(\xi)
 =
 c_{d,\mu}|\xi|^{-\mu},
 \qquad
 c_{d,\mu}>0,
\end{equation}
Because \(\widehat F\) and \(\widehat G\) are Schwartz functions, we have
\(
 |\xi|^{-\mu}
 |\widehat F(\xi)|
 |\widehat G(\xi)|
 \in L^1(\R^d).
\)
In fact, \(|\xi|^{-\mu}\) is locally integrable as \(\mu<d\),
while the rapid decay of the Fourier transforms gives integrability
at infinity. Using \eqref{eq:BRFT} and Fourier duality, we
obtain
\begin{equation}
 B(F,G)
 =
 c_{d,\mu}
 \int_{\R^d}
 |\xi|^{-\mu}
 \widehat F(\xi)
 \overline{\widehat G(\xi)}
 \,d\xi.
\end{equation}
In particular,
\begin{equation}
 B(F,F)
 =
 c_{d,\mu}
 \int_{\R^d}
 |\xi|^{-\mu}
 |\widehat F(\xi)|^2\,d\xi
 \ge0.
\end{equation}
Applying the Cauchy--Schwarz inequality with respect to the measure
\(
 |\xi|^{-\mu}\,d\xi
\)
yields
\begin{align*}
 B(F,G)^2
 &\le
 c_{d,\mu}^2
 \left(
  \int_{\R^d}
  |\xi|^{-\mu}
  |\widehat F(\xi)|^2\,d\xi
 \right)
 \left(
  \int_{\R^d}
  |\xi|^{-\mu}
  |\widehat G(\xi)|^2\,d\xi
 \right)
 =
 B(F,F)B(G,G).
\end{align*}
One can pass to arbitrary nonnegative
\(F,G\in L^{p_\mu}(\R^d)\) by a standard density argument.

\end{proof}

\begin{remark}
The restriction to nonnegative functions in
Lemma~\ref{lem:BRFI} is adapted to nonnegative functions, since the arguments of \(B\) are powers of
absolute values. More generally, the same formula defines a continuous
symmetric bilinear form on the real space
\(L^{p_\mu}(\R^d)\). It is positive semidefinite and satisfies
\[
 |B(F,G)|^2
 \le
 B(F,F)B(G,G),
 \qquad
 B(F,F)\ge0
\]
for arbitrary real-valued
\(F,G\in L^{p_\mu}(\R^d)\). These properties follow from the
non-negativity of the Fourier multiplier
\(c_{d,\mu}|\xi|^{-\mu}\).
\end{remark}

\end{document}